\definecolor{ForestGreen}{rgb}{0.1,0.6,0.05}
\definecolor{EgyptBlue}{rgb}{0.063,0.1,0.6}
\definecolor{RipeOlive}{HTML}{556B2F}
\newcounter{dummy}
\newcommand\myitem[1][]{\item[#1]\refstepcounter{dummy}\def\@currentlabel{#1}}
\newtheorem{theorem}{Theorem}
\newtheorem{proposition}[theorem]{Proposition}
\newtheorem{lemma}[theorem]{Lemma}
\newtheorem{corollary}[theorem]{Corollary}
\theoremstyle{definition}
\newtheorem{remark}[theorem]{Remark}
\numberwithin{equation}{section}
\numberwithin{theorem}{section}
\numberwithin{equation}{section}
\numberwithin{theorem}{section}
\let\OLDthebibliography\thebibliography
\renewcommand\thebibliography[1]{
	\OLDthebibliography{#1}
	\setlength{\parskip}{1pt}
	\setlength{\itemsep}{1pt plus 0.3ex}
}
\newenvironment{proof*}[1]{\begin{trivlist}\item[\hskip%
		\labelsep{{\bf Proof of \/{\rm\bf #1.}}~}]\rm}%
	{\hfill\qed\rm\end{trivlist}}
\newcommand{\W}{W_0^{1,p}}
\newcommand{\Wq}{W_0^{1,q}}
\newcommand{\Wr}{W_0^{1,r}}
\newcommand{\intO}{\int_\Omega}
\newcommand{\intOe}{\int_{\Omega_\varepsilon}}
\newcommand{\intOo}{\int_{\Omega_0}}
\newcommand{\E}{E_{\alpha,\beta}} 
\newcommand{\N}{\mathcal{N}_{\alpha,\beta}} 
\newcommand{\EN}{\widetilde{E}_{\alpha,\beta}} 
\newcommand{\J}{J_{\alpha,\beta}}
\newcommand{\F}{F_{\alpha,\beta}} 
\newcommand{\Ha}{H_{\alpha}} 
\newcommand{\Gb}{G_{\beta}}
\title{
	\vspace*{-1cm}
	Abstract multiplicity results for 
	$(p,q)$-Laplace equations\\ with two parameters 
} 
\author{Vladimir Bobkov ~\&~ Mieko Tanaka \\}
\date{}
\begin{document}
	\maketitle

	\begin{abstract}
		We investigate the existence and multiplicity of abstract weak solutions of the equation $-\Delta_p u -\Delta_q u=\alpha |u|^{p-2}u+\beta |u|^{q-2}u$ in a bounded domain under zero Dirichlet boundary conditions, assuming $1<q<p$ and $\alpha,\beta \in \mathbb{R}$. 
		We determine three generally different ranges of parameters $\alpha$ and $\beta$ for which the problem possesses a given number of distinct pairs of solutions with a prescribed sign of energy.
		As auxiliary results, which are also of independent interest, we provide alternative characterizations of variational eigenvalues of the $q$-Laplacian using narrower and larger constraint sets than in the standard minimax definition.	
		
		\par
		\smallskip
		\noindent {\bf  Keywords}: 
		$p$-Laplacian; $(p,q)$-Laplacian; variational eigenvalues;  multiplicity; symmetric mountain-pass theorem; Nehari manifold.
		
		\smallskip
		\noindent {\bf MSC2010}: 
		35J92,	
		35B30,	
		35A01,   
		35B38.	
	\end{abstract}
	
	\begin{quote}	
		\tableofcontents	
		\addtocontents{toc}{\vspace*{-2ex}}
	\end{quote}

	\section{Introduction}\label{sec:intro}
	We investigate the quasilinear eigenvalue type problem
	\begin{equation*}\label{eq:D}
		\tag{$\mathcal{D}$} 
		\left\{
		\begin{aligned}
			-&\Delta_p u -\Delta_q u=\alpha |u|^{p-2}u+\beta |u|^{q-2}u 
			&&{\rm in}\ \Omega, \\
			&u=0 &&{\rm on }\ \partial \Omega,
		\end{aligned}
		\right.
	\end{equation*}
	where $1<q<p<+\infty$, $\Delta_r$ with $r \in \{p,q\}$ stands for the $r$-Laplace operator which can be defined as $\Delta_r u ={\rm div}\,(|\nabla u|^{r-2}\nabla u)$ for sufficiently regular functions, and the parameters $\alpha, \beta$ are real numbers.
	In view of the symbolic symmetry of the equation in \eqref{eq:D} and since we are interested in the nonhomogeneous case $q \neq p$, the assumption $q<p$ is imposed without loss of generality.
	We assume by default that $\Omega$ is a bounded domain in $\mathbb{R}^N$, $N \geq 1$.
	Moreover, everywhere except for Appendices~\ref{sec:appendixA} and~\ref{sec:appendixB} we assume, in addition, that $\Omega$ is $C^{1,\kappa}$-regular for some $\kappa \in (0,1)$ when $N \geq 2$. 
	
	The pointwise form of the problem \eqref{eq:D} is presented only for visual clarity and, rigorously, \eqref{eq:D} has to be understood in the weak sense. 
	More precisely, by a (weak) solution of the problem \eqref{eq:D} we understand a critical point of the 
	energy functional $\E \in C^1(\W;\mathbb{R})$ defined as
	\begin{equation*}\label{def:E} 
		\E(u) = \frac{1}{p}\, H_\alpha (u)+\frac{1}{q}\,G_\beta(u), 
	\end{equation*}
	where $W_0^{1,p} = W_0^{1,p}(\Omega)$ is the Sobolev space, and we denote
	\begin{equation*}\label{def:HG}
		H_\alpha (u) =\|\nabla u\|_p^p -\alpha\|u\|_p^p 
		\quad {\rm and}\quad 
		G_\beta(u) =\|\nabla u\|_q^q -\beta\|u\|_q^q.
	\end{equation*}
	Hereinafter, taking $r \in [1,+\infty]$, $\| \cdot \|_r$ stands for the usual norm of the Lebesgue space $L^r=L^r(\Omega)$ and $\| \nabla (\cdot) \|_r$ is that of $W_0^{1,r}$. 
	(We use the latter notation instead of a more accurate variant $\| |\nabla (\cdot)| \|_r$, for brevity.)
	Note that $\W$ is the natural energy space for \eqref{eq:D} in view of the assumption $q<p$ and the boundedness of $\Omega$.
	
	The problem \eqref{eq:D} can be seen as a perturbation of the homogeneous eigenvalue problem for the $p$-Laplacian
	\begin{equation}\label{eq:EP}
		\left\{
		\begin{aligned}
			-&\Delta_p u =\alpha |u|^{p-2}u 
			&&{\rm in}\ \Omega, \\
			&u=0 &&{\rm on }\ \partial \Omega,
		\end{aligned}
		\right.
	\end{equation}
	by a specific second-order term $\Delta_q u+\beta |u|^{q-2}u$ corresponding to the homogeneous eigenvalue problem for the $q$-Laplacian. 
	From this point of view, it is natural to anticipate that the relation between the parameters $\alpha$, $\beta$ and eigenvalues of the $p$- and $q$-Laplacians plays a significant role in the existence theory for \eqref{eq:D}. 
	In particular, the perturbation $\Delta_q u+\beta |u|^{q-2}u$, viewed in the weak (integral) form, is \textit{sign-definite} when $\beta < \lambda_1(q)$ and \textit{sign-indefinite} when $\beta > \lambda_1(q)$, which leads to a significant difference in the structure of the solution set of \eqref{eq:D} in these cases.
	Here, $\lambda_1(q)$ is the first eigenvalue of $-\Delta_q$, see \eqref{eq:lambda1r} in Section~\ref{sec:notations} below.
	Let us also observe that the function $t \mapsto \E(tu)$ has at most one critical point in $(0,+\infty)$ for any $u \in \W \setminus \{0\}$.
	In these respects, the problem \eqref{eq:D} is a sibling to a somewhat better known problem
	\begin{equation*}\label{eq:I}
		\left\{
		\begin{aligned}
			-&\Delta_p u = \alpha |u|^{p-2}u + f(x) |u|^{q-2}u  
			&&{\rm in}\ \Omega, \\
			&u=0 &&{\rm on }\ \partial \Omega,
		\end{aligned}
		\right.
	\end{equation*}
	where $f$ is, in general, a sign-changing function, 
	which has been attracting the attention of researchers in recent decades, see, e.g.,
	\cite{BPT1,DHI1,KQU1,KQU,QS}, and we also refer to 
	\cite{alama-tar,berest,drabek-poh,ilyasov2,IS} for the consideration of the superhomogeneous case $q>p$.

	Although the $q$-homogeneous terms in \eqref{eq:D} are always subordinate ``at infinity'' and dominant ``at zero'' to the $p$-homogeneous ones, a finer nature of their interplay crucially depends not only on the relation between $\alpha$, $\beta$ and eigenvalues of $-\Delta_p$, $-\Delta_q$, but also on the relation between $p$ and $2q$.
	More precisely, it is shown in \cite{BobkovTanaka2017} that the energy functional $\E$  has very different behavior in a neighborhood of the point $(\alpha,\beta)=(\lambda_1(p),\beta_*)$ depending on whether $p<2q$ or $p>2q$, see \eqref{eq:lambda1r} and \eqref{def:values} in Section~\ref{sec:notations} for the used notation.
	
	\medskip
	On one hand, the theory of existence and qualitative properties of \textit{sign-constant} solutions of the problem \eqref{eq:D} is relatively well developed in the contemporary literature, see, e.g., \cite{BobkovTanaka2015,BobkovTanaka2017,QS,T-2014} and the overview \cite{marcomasconi}, to mention a few, and also references in and to these works.
	On the other hand, the theory of existence of \textit{sign-changing} solutions of \eqref{eq:D} is  investigated in, e.g., \cite{BobkovTanakaNodal1,T-Uniq}, and we again refer to the bibliography surrounding these works.
	Several results from the above mentioned articles will be recalled in the following sections.
	
	The main aim of the present work is to develop the theory of existence of \textit{abstract} solutions of the problem \eqref{eq:D} by means of the Ljusternik--Schnirelmann theory. 
	By abstract solutions we understand those solutions whose pointwise properties, such as information on the sign, are  unknown. 
	In this direction, for any $p > q \geq 2$, $\rho>0$, and $\nu, \eta \geq 0$ with $\nu+\eta>0$ the general result of \cite{FN} (see also \cite[Section~44.6]{zeidler} and references therein) yields the existence of an infinite sequence of positive numbers (``eigenvalues'') $\lambda_k(\nu,\eta;\rho)$ such that for any $k$, \eqref{eq:D} with $\alpha=\lambda_k(\nu,\eta;\rho)\nu$ and $\beta=\lambda_k(\nu,\eta;\rho)\eta$ possesses a solution $u_k$ satisfying $\|\nabla u_k\|_p^p + \|\nabla u_k\|_q^q = \rho$, and, moreover, $\lambda_k(\nu,\eta;\rho) \to +\infty$ as $k \to +\infty$.
	We also refer to \cite{CS,tienari} for a more recent treatment of related problems in the framework of Orlicz--Sobolev spaces and for the discussion of some qualitative properties of the corresponding eigenvalues.
	The existence of abstract solutions of the problem \eqref{eq:D} with \textit{fixed} parameters $\alpha$, $\beta$ is investigated in \cite{col,ZongoRuf1,ZongoRuf2}.
	In \cite{col}, as a particular case of a more general result, the author obtains multiplicity of solutions when the parameters satisfy certain relations with respect to the so-called quasi-eigenvalues of the $p$- and $(p,q)$-Laplacians.
	When either $\alpha=0$ or $\beta=0$, multiplicity and bifurcation results when the parameter is nearby a variational eigenvalue of the $q$- or $p$-Laplacian, respectively, are studied in \cite{ZongoRuf1,ZongoRuf2}.
	
	Our aim consists in investigating the multiplicity of solutions of the problem \eqref{eq:D} with fixed parameters $\alpha$ and $\beta$ in ranges different from and extending those found in \cite{col,ZongoRuf1,ZongoRuf2}.
	For this purpose, we employ several minimax variational methods 
	(see, e.g., \cite{Clark,rabinowitz,Struwe,Szulkin}) and determine three generally different regions on the $(\alpha,\beta)$-plane where the problem possesses a given number of distinct pairs of solutions with a prescribed sign of energy.
	
	\smallskip
	The paper has the following structure. 
	In the rest of this section, we first set up notation and recall some known facts, and then state our main results.
	Section~\ref{sec:prelim0} contains preliminary material and auxiliary results.
	In Section~\ref{sec:proofs}, we provide proofs of the main results.
	Appendix~\ref{sec:appendixA} contains a discussion on alternative characterizations of variational eigenvalues of the $q$-Laplacian, some of which are used in the proofs of the main results.
	Finally, in Appendix~\ref{sec:appendixB}, we discuss further properties of eigenvalues which justify the nontriviality of assumptions of the first main theorem.

	\subsection{Some facts and notation}\label{sec:notations}
	
	Let $r>1$.
	In the standard way, if for some $\lambda \in \mathbb{R}$ there exists a nonzero function $\varphi \in \Wr$ such that
	$$
	\intO |\nabla \varphi|^{r-2} \nabla \varphi \nabla \xi \,dx
	=
	\intO |\varphi|^{r-2} \varphi \xi \, dx
	\quad \text{for any}~ \xi \in \Wr,
	$$
	then $\lambda$ and $\varphi$ are called an eigenvalue and eigenfunction of the $r$-Laplacian, respectively.
	We will use the notation $\sigma(-\Delta_r)$ for the set of all eigenvalues (i.e., the spectrum) of $-\Delta_r$, and denote by 
	$ES(r;\lambda)$ the closure of the set of all eigenfunctions (i.e., the eigenspace) of $-\Delta_r$ associated with $\lambda \in \mathbb{R}$. 
	Note that $ES(r;\lambda) = \emptyset$ if $\lambda$ does not belong to $\sigma(-\Delta_r)$.
	
	\begin{remark}\label{rem:reg}
		Thanks to the imposed $C^{1,\kappa}$-regularity of $\Omega$, any eigenfunction $\varphi$ of the $r$-Laplacian, as well as any weak solution $u$ of the problem~\eqref{eq:D}, belongs to $C_0^{1,\mu}(\overline{\Omega})$ for some $\mu \in (0,1)$.
		This regularity result follows from	\cite[Theorem~1]{Lieberman} (see a discussion in \cite[p.~320]{L} in the case of \eqref{eq:D}) by noting that $\varphi,u$ are bounded in $\Omega$, which can be shown using the standard bootstrap arguments.
	\end{remark}
	
	We will deal with a sequence of ``variational'' eigenvalues $\{\lambda_k(r)\}$ of $-\Delta_r$ 
	constructed via the Krasnoselskii genus as follows. 
	Denote
	\begin{equation}\label{eq:lambdak}
		\lambda_k(r) 
		=
		\inf
		\left\{
		\max_{u\in A} \frac{\|\nabla u\|_r^r}{\|u\|_r^r}:~ A\in \Sigma_k(r)
		\right\},
	\end{equation}
	where
	\begin{equation}\label{eq:Sigma-r}
		\Sigma_k(r) =\left\{\, A\subset W_0^{1,r} \setminus \{0\}:~
		A\ {\rm is\ symmetric,\ compact\ in}\ W^{1,r}_0,\ 
		{\rm and}\ \gamma(A)\ge k\,\right\}
	\end{equation}
	and $\gamma(A)$ is the Krasnoselskii genus defined as
	\begin{equation}\label{eq:genus}
		\gamma(A)
		=
		\inf \{ k\in\mathbb{N}:~
		\exists \,h \in C(A,\mathbb{R}^k\setminus\{0\}), ~
		h\ {\rm is\ odd}\}.
	\end{equation}
	We set $\gamma(A)=+\infty$ if a continuous odd mapping in the definition \eqref{eq:genus} does not exist, and we also set $\gamma(\emptyset)=0$. 
	It is known that every $\lambda_k(r)$ is an eigenvalue of $-\Delta_r$, 
	$\lambda_1(r)$ and $\lambda_2(r)$ are the actual first and second eigenvalues, respectively, and $\lambda_k(r) \to +\infty$ as $k \to +\infty$, see, e.g., \cite{cuesta} for an overview.
	
	In Appendix~\ref{sec:appA1}, we show that $\lambda_k(q)$ can be alternatively characterized by the smaller constraint set $\Sigma_k(r)$ with $r>q$, which will be used in the proofs of our main results (see Remark~\ref{rem:gap}). 
	In addition, Appendix~\ref{sec:appA2} contains a discussion about the characterization of $\lambda_k(q)$ via $\Sigma_k(r)$ with $r<q$, in which case the constraint set is larger.
	
	Let us mention explicitly that the first eigenvalue of the $r$-Laplacian can be defined as
	\begin{equation}\label{eq:lambda1r}
		\lambda_1(r) =
		\inf\left\{
		\frac{\|\nabla u\|_r^r}{\|u\|_r^r}:~ u \in W_0^{1,r} \setminus \{0\}
		\right\}.
	\end{equation}
	It is known that $\lambda_1(r)$ is attained, it is isolated and simple.
	Moreover, any corresponding minimizer (i.e., the first eigenfunction of $-\Delta_r$) has a strict sign in $\Omega$, while any higher eigenfunction of $-\Delta_r$ is necessary sign-changing, see \cite{cuesta} and references therein. 
	We will denote the first eigenfunction by $\varphi_r$ and assume, without loss of generality that $\varphi_r>0$ in $\Omega$ and $\|\varphi_r\|_r = 1$.
	
	\medskip
	Consider the family of critical parameters
	\begin{equation}\label{def:beta_*}
		\beta_*(\alpha) = \inf\left\{ \frac{\|\nabla u\|_q^q}{\|u\|_q^q}:~
		u\in\W\setminus \{0\} \text{ and } H_\alpha(u)\le 0\right\},
	\end{equation}
	and set $\beta_*(\alpha)=+\infty$ if $\alpha<\lambda_1(p)$. 
	Several main properties of $\beta_*(\alpha)$ are collected in \cite[Proposition~7]{BobkovTanaka2017}.
	In particular, $\beta_*(\alpha)$ is continuous for $\alpha>\lambda_1(p)$, 
	strictly decreasing for $\lambda_1(p) \leq \alpha \leq \alpha_*$, $\beta_*(\alpha)=\lambda_1(q)$ for $\alpha \geq \alpha_*$, and $\beta_*(\lambda_1(p)) = \beta_* > \lambda_1(q)$, where
	\begin{equation}\label{def:values} 
		\alpha_*=\frac{\|\nabla \varphi_q\|_p^p}{\|\varphi_q\|_p^p} 
		\quad \text{and} \quad 
		\beta_*
		=
		\frac{\|\nabla \varphi_p\|_q^q}{\|\varphi_p\|_q^q}.
	\end{equation}
	Let us explicitly note that $\alpha_*$ is well defined thanks to the $C^{1,\mu}(\overline{\Omega})$-regularity of $\varphi_q$, see Remark~\ref{rem:reg}.
	A few additional properties of $\beta_*(\alpha)$ needed for the present work are placed in Section~\ref{sec:prelim} and also in Appendix~\ref{sec:appendixB}.
	
	\medskip
	We will denote the (sub- and super-) level sets of a functional $I$ on $\W$ (with main examples of $I$ being $\E$, $\Ha$, $\Gb$) as
	\begin{align*}
		&[I\le c ]=
		\left\{u\in \W:~ I(u)\le c\right\}, 
		\quad 
		[I< c ]=
		\left\{u\in \W:~ I(u)< c\right\}, 
		\\
		&[I\ge c] = [-I \le -c],
		\quad
		[I>c]=[-I<-c],
		\quad 
		[I = c]=
		\left\{u\in \W:~ I(u)= c\right\}.
	\end{align*}
	Also, we will denote the critical set of the energy functional $\E$ and its restriction to a level $c \in \mathbb{R}$ as
	\begin{equation}\label{def:critical_set} 
		K=\{u\in\W:~ \E^\prime(u)=0\,\} 
		\quad {\rm and} \quad 
		K_c=K\cap [\E=c],
	\end{equation}
	respectively.

	\subsection{Main results}\label{sec:main}
	
	In our mains results -- Theorems~\ref{thm1}, \ref{thm2}, and \ref{thm3} -- we provide multiplicity of solutions for the problem \eqref{eq:D} in the sets $[\E<0]$ and $[\E>0]$, see Figure~\ref{fig:main}. 
	We start with the negative energy case.
	
	\begin{theorem}\label{thm1} 
		Let $k\in\mathbb{N}$ and either of the following assumptions be satisfied:
		\begin{enumerate}[label={\rm(\roman*)}]
			\item\label{thm1:1}  $\alpha < \lambda_1(p)$ and $\lambda_k(q) < \beta$;
			\item\label{thm1:2}  $\alpha = \lambda_1(p)$, and $\lambda_k(q) < \beta < \beta_*$;
			\item\label{thm1:2x}  $\alpha = \lambda_1(p)$, $\lambda_k(q) < \beta = \beta_*$, and $p>2q$;
			\item\label{thm1:3}  $\lambda_1(p)<\alpha < \alpha_*$ and $\lambda_k(q) < \beta \leq \beta_*(\alpha)$.
		\end{enumerate}
		Then $\E$ has at least $k$ distinct pairs of critical points in 
		$[\E<0]$, that is, \eqref{eq:D} has at least $k$ distinct pairs of 
		(nontrivial) solutions with negative energy. 
	\end{theorem}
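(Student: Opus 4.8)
The functional $\E$ is even, belongs to $C^1(\W;\mathbb{R})$ and satisfies $\E(0)=0$; moreover, as recalled in the excerpt, for every $u\in\W\setminus\{0\}$ the fibering map $t\mapsto\E(tu)$ has at most one critical point on $(0,+\infty)$. My plan is to produce the required critical points by a symmetric (Ljusternik--Schnirelmann) minimax argument carried out not on all of $\W$, but on the open symmetric cone $\mathcal{C}=[\Gb<0]$. This restriction is essential because $\E$ is in general \emph{unbounded below} on $\W$ (for instance $\E(t\varphi_p)\to-\infty$ whenever $\alpha>\lambda_1(p)$, so that a naive genus minimax of $\E$ would return the value $-\infty$), whereas $\E$ turns out to be bounded below on $\mathcal{C}$.

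First I would show that $\Ha>0$ on $\mathcal{C}$ in each of the cases \ref{thm1:1}--\ref{thm1:3}. In case \ref{thm1:1} this is immediate since $\Ha>0$ everywhere. In the remaining cases, if $u\in\mathcal{C}$ then $\|\nabla u\|_q^q/\|u\|_q^q<\beta\le\beta_*(\alpha)$, whereas $\Ha(u)\le0$ would force $\|\nabla u\|_q^q/\|u\|_q^q\ge\beta_*(\alpha)$ by the very definition \eqref{def:beta_*} of $\beta_*(\alpha)$, a contradiction. Consequently, for $u\in\mathcal{C}$ the map $t\mapsto\E(tu)=\tfrac{t^p}{p}\Ha(u)+\tfrac{t^q}{q}\Gb(u)$, with $\Ha(u)>0$ and $\Gb(u)<0$, tends to $+\infty$, is negative for small $t$, and hence (by the uniqueness of the interior critical point) attains a unique strictly negative global minimum at some $t(u)>0$; I set $m(u):=\E(t(u)u)<0$. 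The function $m$ is even, continuous and $0$-homogeneous, and $t(\cdot)$ is smooth by the implicit function theorem, so critical points of $m$ correspond to critical points of $\E$ lying on the Nehari-type set $\N$, all of which have energy $m<0$ and therefore belong to $[\E<0]$.

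Two quantitative ingredients remain. For the geometry near the origin I would invoke the alternative characterization of $\lambda_k(q)$ through the constraint class $\Sigma_k(p)$ of sets compact in $\W$ established in Appendix~\ref{sec:appA1}: since $\lambda_k(q)<\beta$, there is a symmetric $A_0\in\Sigma_k(p)$ with $\max_{u\in A_0}\|\nabla u\|_q^q/\|u\|_q^q<\beta$, that is, $A_0\subset\mathcal{C}$, $\gamma(A_0)\ge k$, and, by continuity and $0$-homogeneity of $m$, $\sup_{A_0}m=:-\delta<0$. For compactness I would prove that $m$ is bounded below on $\mathcal{C}$. Using the properties of $\beta_*(\alpha)$ recalled after \eqref{def:beta_*}, in cases \ref{thm1:1} and \ref{thm1:2} the closed cone $\overline{\mathcal{C}}$ stays away from $[\Ha\le0]$ (note $\varphi_p\notin\overline{\mathcal{C}}$ because $\Gb(\varphi_p)=(\beta_*-\beta)\|\varphi_p\|_q^q>0$), which yields a uniform positive lower bound for $\Ha$ on the normalized cone and hence $\inf_{\mathcal{C}}m>-\infty$; case \ref{thm1:3} away from the degenerate direction is analogous. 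The genuinely delicate situation is \ref{thm1:2x}, where $\beta=\beta_*$, so $\Gb(\varphi_p)=0$ and $\pm\varphi_p\in\partial\mathcal{C}$: the cone now approaches the single direction along which both $\Ha$ and $\Gb$ vanish, and controlling $m(u)$ as $u\to\varphi_p$ inside $\mathcal{C}$ is exactly where the sharp balance $p>2q$ is needed. I expect this lower bound to be the main obstacle, and I would attack it by writing $u=s\varphi_p+w$ and expanding $\Ha$ and $\Gb$ to second order to compare the $t^p\Ha$ and $t^q\Gb$ contributions at the minimizing $t(u)$.

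Finally, with $-\infty<c_1\le\cdots\le c_k\le-\delta<0$ for the genus minimax values $c_j=\inf\{\sup_{u\in A}m(u):A\subset\mathcal{C}\text{ symmetric, compact},\ \gamma(A)\ge j\}$, I would run the standard equivariant deformation argument. Because $m\to0^-$ as one approaches $\partial\mathcal{C}$ while the values $c_j$ are bounded away from $0$, the sublevel sets $[m\le c_j]$ are compact subsets of the open cone, so the odd negative pseudo-gradient deformation stays inside $\mathcal{C}$; the Palais--Smale condition, valid at these negative levels, then makes each $c_j$ a critical value of $m$, and the usual genus subadditivity $\gamma(K_{c_j})\ge\ell+1$ whenever $c_j=\cdots=c_{j+\ell}$ yields at least $k$ distinct pairs of critical points of $m$. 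Translating back through $u\mapsto t(u)u$, these furnish at least $k$ distinct pairs of critical points of $\E$ in $[\E<0]$, which is the assertion.
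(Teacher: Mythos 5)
Your overall strategy---passing to the $0$-homogeneous fibered functional $m=\J$ on the open cone $[\Gb<0]$ and running a genus minimax there---is the Nehari-manifold variant that the authors themselves flag as viable in the remark following their proof of Theorem~\ref{thm1}, and several of your ingredients match the paper's: your use of the Appendix~\ref{sec:appA1} characterization of $\lambda_k(q)$ via $\Sigma_k(p)$ to place a genus-$k$ set inside $[\Gb<0]$ with negative energy is exactly Lemma~\ref{lem:surface} (via Lemma~\ref{lem:la=la}), your observation that $\Ha>0$ on $[\Gb<0]$ is Lemma~\ref{lem:HG<0}~\ref{lem:H<02}, and the passage from critical points of the fibered functional back to critical points of $\E$ is Propositions~\ref{prop:minpoint} and~\ref{prop:critical}. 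However, the execution has genuine gaps precisely at the hard points. First, in case \ref{thm1:2x} you correctly identify the degenerate direction $\pm\varphi_p$ (where $\Ha$ and $\Gb$ vanish simultaneously) as the crux, but you only announce a second-order expansion; the actual estimate requires the improved Poincar\'e inequality of \cite{takac,BK}, and in the paper it is split into two distinct statements: boundedness below (Lemma~\ref{lem:bdd_below}~\ref{lem:bdd_below:2x}, where $p\ge 2q$ suffices, citing \cite{BobkovTanaka2017}) and the Palais--Smale condition at negative levels (Lemma~\ref{lem:PS_negative3}, where $p>2q$ strictly enters). As written, this step of your proof is missing, not merely deferred.

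Second, and more seriously, your deformation argument hinges on the claim that $m\to 0^-$ as one approaches $\partial[\Gb<0]$, and you treat case \ref{thm1:3} as unproblematic (``analogous away from the degenerate direction''). But Theorem~\ref{thm1}~\ref{thm1:3} allows $\beta=\beta_*(\alpha)$, and there a degenerate boundary direction exists with no $p>2q$ hypothesis to rescue you: a minimizer $u_*$ of $\beta_*(\alpha)$ satisfies $\Gb(u_*)=0$ by definition and $\Ha(u_*)\le 0$ by the constraint, while Lemma~\ref{lem:HG<0}~\ref{lem:H<03} forces $\Ha(u_*)\ge 0$, so $\Ha(u_*)=\Gb(u_*)=0$; near $u_*$ both the numerator and denominator in \eqref{eq:J} vanish and your boundary claim is unjustified. (Relatedly, your assertion that the sublevel sets $[m\le c_j]$ are \emph{compact} is false in $\W$: they are cones.) The paper circumvents this entirely by deforming $\E$ itself, which is defined on all of $\W$: since $\E$ is nonincreasing along the odd deformation and the starting set satisfies $\max_A \E<\min\{a_j+\varepsilon,0\}$, any trajectory touching $[\Gb=0]$ would have $\E\ge 0$ there by Lemma~\ref{lem:HG<0}~\ref{lem:H<03}, a contradiction---so the deformed set stays in $[\Gb<0]$ with \emph{no} quantitative boundary estimate on $m$ at all. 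Finally, you assert the Palais--Smale condition at negative levels without proof; in case \ref{thm1:3} the interval $(\lambda_1(p),\alpha_*)$ may contain eigenvalues of $-\Delta_p$, and in that resonant situation boundedness of Palais--Smale sequences is nontrivial: the paper's Lemma~\ref{lem:PS_negative} must show $\Gb>0$ on $ES(p;\alpha)\setminus\{0\}$, excluding eigenfunction-minimizers of $\beta_*(\alpha)$ via \cite[Lemma~5.1]{BobkovTanaka2017}. Your proposal does not address the resonant case at all, so even in the nondegenerate regime it is incomplete.
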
 
	
	The result of Theorem~\ref{thm1} is known when $k=1$, see \cite[Proposition~1 and Theorem~2.6, 2.7]{BobkovTanaka2017}.
	In addition, at least when $\alpha < \lambda_2(p)$, $\beta > \lambda_2(q)$, and $(\alpha,\beta) \neq (\lambda_1(p),\beta_*)$, \cite[Theorem~1.7]{BobkovTanakaNodal1} guarantees the existence of a sign-changing solution of \eqref{eq:D} with negative energy.
	We refer to Theorem~\ref{thm1-alph} below for a particular case of Theorem~\ref{thm1} formulated in different terms, see also Figure~\ref{fig:perturb}.
	
	\begin{remark}\label{rem:poinc}
		In the case $N \geq 2$, the $C^2$-regularity of $\Omega$ and, occasionally, the connectedness of $\partial\Omega$ imposed in \cite{BobkovTanaka2017} can be weakened to the $C^{1,\kappa}$-regularity with $\kappa \in (0,1)$.
		Indeed, these more restrictive assumptions were mainly required in order to employ in the proofs the improved Poincar\'e inequality from \cite{takac}, see \cite[Remark~5]{BobkovTanaka2017}.
		However, it was recently shown in \cite[Theorem~1.2]{BK} that the result of \cite{takac} remains valid assuming only that $\Omega$ is of class $C^{1,\kappa}$.
	\end{remark}
	
	Let us observe that for $N=1$, \cite[Lemma~A.2]{BobkovTanakaNodal1} gives $\beta_* < \lambda_2(q)$, and hence the assumptions \ref{thm1:2}, \ref{thm1:2x}, \ref{thm1:3} of Theorem~\ref{thm1} can be satisfied only in the case $k=1$, in which the existence is known. 
	In order to assure that the statement of  Theorem~\ref{thm1} under the assumptions \ref{thm1:2}, \ref{thm1:2x}, \ref{thm1:3} can be nontrivial for any given $k \geq 2$ in the higher dimensional case $N \geq 2$, in Lemma~\ref{lem:beta*>lambdak} below we construct a smooth bounded domain $\Omega$ for which $\lambda_k(q) < \beta_*$.

	\begin{figure}[!ht]
		\center{
			\includegraphics[scale=0.65]{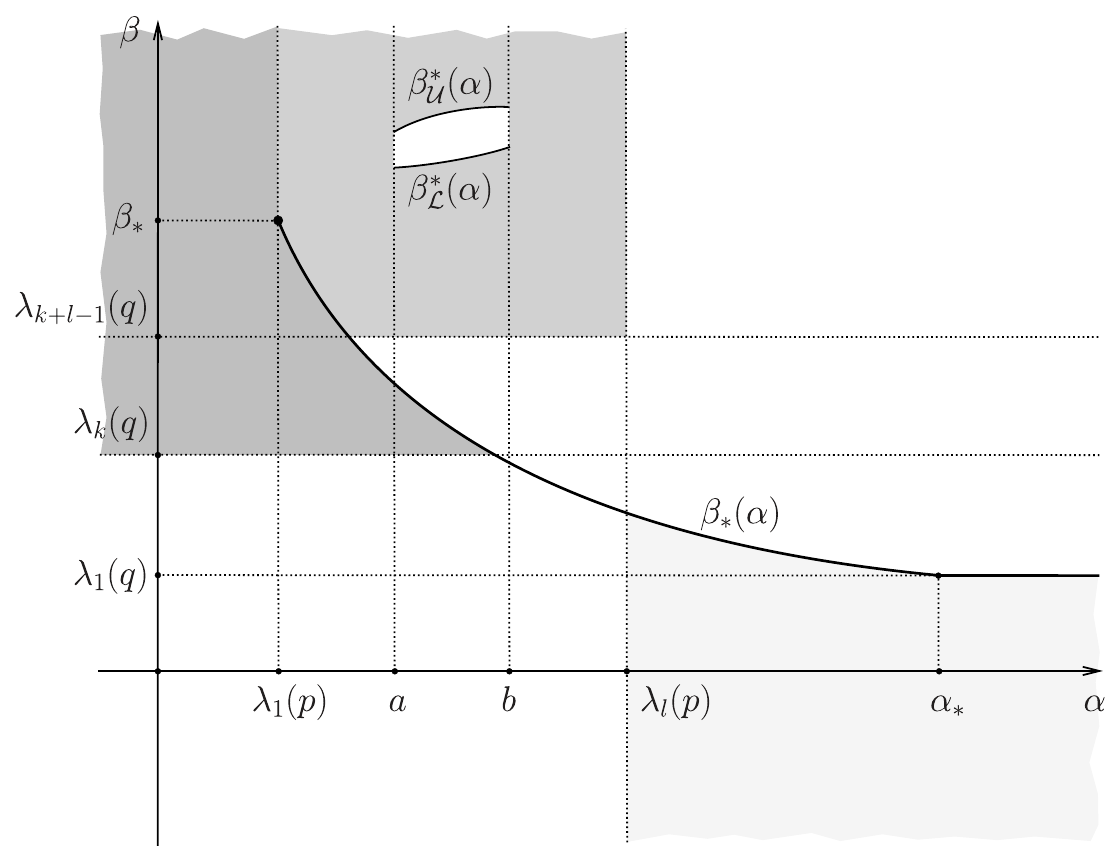}
		}
		\caption{A schematic plot of admissible assumptions of Theorem~\ref{thm1} (dark gray) assuming $\lambda_k(q)<\beta_*$, Theorem~\ref{thm2} (gray $+$ a part of dark gray) assuming $\lambda_{k+l-1}(q)<\beta_*$ and $[a,b] \subset \sigma(-\Delta_p)$, and Theorem~\ref{thm3} (light gray) assuming $\lambda_l(p)<\alpha_*$.}
		\label{fig:main}
	\end{figure}
	
	\medskip
	The multiplicity of solutions of \eqref{eq:D} with negative energy can also be obtained for another range of parameters $\alpha$ and $\beta$.
	\begin{theorem}\label{thm2} 
		Let $k, l \in \mathbb{N}$.
		Assume that $\alpha < \lambda_l(p)$ and 
		${\lambda}_{k+l-1}(q)<\beta$.	
		Assume, in addition, that 
		\begin{equation}\label{resonant:G}
			G_\beta(v)\not=0
			\quad 
			{\rm for\ all}\ v\in ES(p;\alpha)\setminus\{0\} 
		\end{equation}
		provided $\alpha\in\sigma(-\Delta_p)$. 
		Then $\E$ has at least $k$ distinct pairs of critical points in $[\E<0]$. 
	\end{theorem}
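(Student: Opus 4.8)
The plan is to realize the $k$ pairs as critical points of $\E$ at negative levels, produced by an equivariant minimax scheme based on the Krasnoselskii genus. I would first collect the structural facts that drive the scheme. The functional $\E$ is even with $\E(0)=0$, and, as recorded after \eqref{eq:D}, the fibering map $t\mapsto\E(tu)$ has at most one critical point on $(0,+\infty)$ for each $u\in\W\setminus\{0\}$. Writing $\E(tu)=\frac{t^p}{p}\Ha(u)+\frac{t^q}{q}\Gb(u)$, one sees that the ray through $u$ enters $[\E<0]$ precisely when $\Ha(u)<0$ or $\Gb(u)<0$, and that it carries a genuine (finite) negative minimum exactly on the cone $\{\Ha>0,\ \Gb<0\}$, where the unique Nehari point $t(u)u\in\N$ minimizes $\E$ along the ray; on $\{\Ha\le 0\}$ the ray energy is unbounded below. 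This dichotomy identifies $\{\Ha\le 0\}$ as the ``descending'' directions that must be quotiented out, and suggests working either with $\E$ on $\N\cap\{\Ha>0\}$ (equivalently with the reduced functional $\EN$ on the good cone) or, directly, with a pseudo-index built relative to the cone $\{\Ha\le 0\}$.

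The conceptual heart of the argument is a genus count in which both hypotheses enter. From $\lambda_{k+l-1}(q)<\beta$, together with the alternative characterization of $\lambda_{k+l-1}(q)$ by the narrower constraint class $\Sigma_{k+l-1}(p)$ established in Appendix~\ref{sec:appA1} (cf.\ Remark~\ref{rem:gap}), I would produce a symmetric set $V$, compact in $\W$, with $\gamma(V)\ge k+l-1$ and $\max_{u\in V}\|\nabla u\|_q^q/\|u\|_q^q<\beta$, so that $\Gb<0$ throughout $V$; working in $\W$ rather than in $\Wq$ is exactly why the narrower characterization is required. From $\alpha<\lambda_l(p)$, on the other hand, the cone $\{u\neq0:\Ha(u)\le 0\}$ has genus at most $l-1$: otherwise it would contain a compact symmetric subset belonging to $\Sigma_l(p)$ on which $\|\nabla u\|_p^p/\|u\|_p^p\le\alpha$, contradicting the definition \eqref{eq:lambdak} of $\lambda_l(p)$. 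Subadditivity of the genus then yields $\gamma\big(\overline{V\setminus\{\Ha\le 0\}}\big)\ge(k+l-1)-(l-1)=k$, with this set sitting inside the good cone $\{\Ha>0,\ \Gb<0\}$. The identity $(k+l-1)-(l-1)=k$ is precisely where the two spectral assumptions combine to deliver the required count of $k$ pairs.

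With the good cone thus equipped with genus at least $k$, I would define minimax values $c_1\le\cdots\le c_k<0$ via an inf--sup over symmetric families whose pseudo-index, measured relative to the genus-$(l-1)$ cone $\{\Ha\le 0\}$, is at least $j$; the genus-$k$ subset of $V$ lying in the good cone ensures these families are nonempty and keeps $c_j<0$, while the anchoring to $\{\Ha\le 0\}$ rules out the pathological sets that would otherwise drive the levels to $-\infty$ and so keeps each $c_j>-\infty$, despite $\E$ being unbounded below when $\alpha>\lambda_1(p)$. It then remains to verify the Palais--Smale condition at these negative levels. A (PS)$_c$ sequence is bounded, and then convergent by the $(S_+)$-property of $-\Delta_p-\Delta_q$ on $\W$. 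Boundedness is the only delicate point: rescaling a putatively unbounded sequence, the leading $p$-homogeneous balance forces the (strong, nonzero) limit to lie in $ES(p;\alpha)$ when $\alpha\in\sigma(-\Delta_p)$, while the sub-leading $q$-homogeneous balance forces $\Gb$ to vanish on this limit; hypothesis \eqref{resonant:G} forbids the latter, so at the resonant parameter no unbounded (PS)$_c$ sequence with $c<0$ can occur (and when $\alpha\notin\sigma(-\Delta_p)$ no such limit exists at all).

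Finally, the equivariant minimax principle (in the symmetric form, cf.\ \cite{rabinowitz}) yields that each $c_j$ is a critical value of $\E$ and that a coincidence $c_j=\cdots=c_{j+m}$ forces $\gamma(K_{c_j})\ge m+1$; counting multiplicities produces at least $k$ distinct pairs of critical points in $[\E<0]$. The main obstacle I anticipate is not the genus arithmetic but the degeneracy at $\{\Ha=0\}$: the closure $\overline{V\setminus\{\Ha\le0\}}$ and the deformation flow underlying the minimax can approach the locus $\{\Ha=0,\ \Gb<0\}$, where the fibering $t(u)$ blows up and $\EN\to-\infty$. Making the pseudo-index construction confine the competing sets away from this locus---so that the minimax levels stay simultaneously negative and finite---while keeping the deformation equivariant and the pseudo-index non-decreasing, is the technical crux, and is where the bookkeeping between the constraint classes $\Sigma_\bullet(p)$ and $\Sigma_\bullet(q)$ must be reconciled.
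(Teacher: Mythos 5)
Your ingredients for the upper bound and for compactness match the paper: the set with $\gamma\ge k+l-1$ on which $G_\beta<0$ comes from Lemma~\ref{lem:la=la} exactly as in Lemma~\ref{lem:surface}, and your Palais--Smale analysis (rescaling an unbounded sequence, forcing the limit into $ES(p;\alpha)$ with $G_\beta$ vanishing on it, contradicting \eqref{resonant:G}, then concluding via the $(S_+)$-property) is precisely Lemma~\ref{lem:PS_negative2}. The genuine gap is the lower bound on the minimax levels: you never establish $c_j>-\infty$, and you yourself flag the obstruction as an unresolved ``technical crux''. Your proposed mechanism --- a pseudo-index relative to the cone $[H_\alpha\le 0]$, confining competing sets to the region $[H_\alpha>0]\cap[G_\beta<0]$ --- faces a real difficulty in the regime of Theorem~\ref{thm2}: unlike Theorem~\ref{thm1}, there is no hypothesis $\beta\le\beta_*(\alpha)$ here, so Lemma~\ref{lem:HG<0}~\ref{lem:H<03} is unavailable and the set $[H_\alpha=0]\cap[\E<0]$ may well be nonempty; a deformation can therefore cross $[H_\alpha=0]\cap[G_\beta<0]$, where the ray energies $t\mapsto\E(tu)$ are unbounded below, and nothing in your sketch prevents the levels from degenerating to $-\infty$. (A secondary slip: your claim that the full cone $\{u\ne0:\,H_\alpha(u)\le0\}$ has genus $\le l-1$ is only justified by your argument for \emph{compact} subsets, since the definition \eqref{eq:lambdak} quantifies over $\Sigma_l(p)$; this suffices for the subadditivity step applied to your compact $V$, but not as stated.)

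The paper's proof avoids all of this with a simpler device that your scheme is missing: it runs the \emph{plain} genus minimax, but with shifted indices, setting $b_j=\inf\{\max_A\E:\ A\in\Sigma_j(p)\}$ for $j=l,\dots,k+l-1$, with no confinement of the admissible sets whatsoever. The lower bound $b_l>-\infty$ is then automatic: by the definition \eqref{eq:lambdak} of $\lambda_l(p)$, every $A\in\Sigma_l(p)$ contains a point $u_0$ with $\|\nabla u_0\|_p^p\ge\lambda_l(p)\|u_0\|_p^p$, i.e.\ $u_0\in Y(\lambda_l(p))$, and since $\alpha<\lambda_l(p)$, Lemma~\ref{lem:bdd_below_Y} bounds $\E$ from below on $Y(\lambda_l(p))$ uniformly over all admissible $A$; the top value $b_{k+l-1}<0$ comes from Lemma~\ref{lem:surface}. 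Because Lemma~\ref{lem:PS_negative2} gives the Palais--Smale condition at \emph{every} level under the hypotheses of Theorem~\ref{thm2}, the standard deformation argument needs no invariance of any cone, and the $k$ values $b_l\le\cdots\le b_{k+l-1}<0$ (with the usual $\gamma(K_a)\ge m+1$ argument for coincident levels) deliver the $k$ pairs. So your genus arithmetic $(k+l-1)-(l-1)=k$ reappears in the paper only as index bookkeeping; to repair your write-up you should either import this shifted-index argument or actually construct the pseudo-index with verified deformation invariance and finiteness of levels, which your proposal leaves open.
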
 
	
	\begin{remark} 
		Consider the critical values
		\begin{align*}
			\beta^*_\mathcal{L}(\alpha)
			&=
			\inf
			\left\{
			\frac{\|\nabla u\|_q^q}{\|u\|_q^q}:~
			u
			\in
			ES(p;\alpha)\setminus\{0\}
			\right\},
			\\	
			\beta^*_\mathcal{U}(\alpha)
			&=
			\sup
			\left\{
			\frac{\|\nabla u\|_q^q}{\|u\|_q^q}:~
			u
			\in
			ES(p;\alpha)\setminus\{0\}
			\right\},
		\end{align*}
		and set $\beta^*_\mathcal{L}(\alpha) = +\infty$ and $\beta^*_\mathcal{U}(\alpha) = -\infty$ when $\alpha \not\in\sigma(-\Delta_p)$.
		It is shown in \cite[Lemma~3.6]{BobkovTanakaNodal1} that $\beta^*_\mathcal{U}(\alpha)<+\infty$ for any $\alpha \in \mathbb{R}$.
		It is clear that 
		the condition \eqref{resonant:G} of Theorem~\ref{thm2} holds 
		for any $\beta \in (-\infty,\beta^*_\mathcal{L}(\alpha)) \cup (\beta^*_\mathcal{U}(\alpha),+\infty)$ in the resonant case $\alpha\in\sigma(-\Delta_p)$. 
	\end{remark} 
	
	The existence of at least one sign-changing solution of \eqref{eq:D} with negative energy for $\alpha$, $\beta$ satisfying similar assumptions to that of Theorem~\ref{thm2} is given by \cite[Theorem~1.6]{BobkovTanakaNodal1}.
	
	Notice that if $\alpha>\lambda_1(p)$ and $\beta$ is large enough, then \textit{any} solution provided by Theorem~\ref{thm2} is sign-changing. 
	Indeed, \cite[Theorem~2.2]{BobkovTanaka2015} postulates the existence of a curve $\mathcal{C}$ in the quadrant $[\lambda_1(p),+\infty) \times[\lambda_1(q),+\infty)$ of the $(\alpha,\beta)$-plane which separated ranges of the existence and nonexistence of positive (in fact, nontrivial and sign-constant) solutions of \eqref{eq:D} depending on whether $(\alpha,\beta)$ lies below or above $\mathcal{C}$, respectively. 
	Thus, if $(\alpha,\beta)$ lies above $\mathcal{C}$ and the assumptions of Theorem~\ref{thm2} are satisfied, then the obtained solutions change sign in $\Omega$.
	
	It is not hard to see that Theorems~\ref{thm1} and \ref{thm2} are equivalent whenever $\alpha<\lambda_1(p)$.
	On the other hand, if $\alpha \geq \lambda_1(p)$, then the ranges for $\beta$ provided by these theorems for a given $k$  are already different. 
	In particular, the following result is a direct corollary of Theorems~\ref{thm1} and \ref{thm2}.
	\begin{corollary}\label{cor:thm12}
		Let $k \in \mathbb{N}$.
		Assume that $\alpha=\lambda_1(p)$ and either of following assumptions is satisfied:
		\begin{enumerate}[label={\rm(\roman*)}]
			\item\label{cor:thm12:1}  $\lambda_k(q) < \beta < \beta_*$;
			\item\label{cor:thm12:3}  $\lambda_k(q) < \beta = \beta_*$ and $p>2q$;
			\item\label{cor:thm12:4}  $\lambda_{k+1}(q) < \beta \neq \beta_*$.
		\end{enumerate}
		Then $\E$ has at least $k$ distinct pairs of critical points in 
		$[\E<0]$.
	\end{corollary}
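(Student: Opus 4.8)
The plan is to reduce each of the three cases of the corollary to one of the two preceding multiplicity theorems; the only point requiring genuine verification is the resonance condition \eqref{resonant:G} in the last case.

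Cases \ref{cor:thm12:1} and \ref{cor:thm12:3} require no work. With $\alpha=\lambda_1(p)$ they are literally the hypotheses \ref{thm1:2} and \ref{thm1:2x} of Theorem~\ref{thm1}, so that theorem already furnishes at least $k$ distinct pairs of critical points of $\E$ in $[\E<0]$.

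For case \ref{cor:thm12:4} I would invoke Theorem~\ref{thm2} with the choice $l=2$. The requirement $\alpha<\lambda_l(p)$ then reads $\lambda_1(p)<\lambda_2(p)$, which holds because $\lambda_1(p)$ is isolated and simple (so in particular $\lambda_2(p)>\lambda_1(p)$), while the requirement $\lambda_{k+l-1}(q)<\beta$ becomes precisely $\lambda_{k+1}(q)<\beta$, i.e.\ the standing assumption of this case. Since here $\alpha=\lambda_1(p)\in\sigma(-\Delta_p)$, Theorem~\ref{thm2} additionally demands the resonance condition \eqref{resonant:G}, and this is the one step that has to be checked.

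The key computation proceeds as follows. Because $\lambda_1(p)$ is simple with positive normalized eigenfunction $\varphi_p$, the eigenspace $ES(p;\lambda_1(p))$ consists exactly of the multiples $t\varphi_p$ with $t\in\mathbb{R}$. Exploiting the $q$-homogeneity of $\Gb$ together with the definition of $\beta_*$ in \eqref{def:values}, one finds for $t\neq 0$ that
\begin{equation*}
	\Gb(t\varphi_p)
	=
	|t|^q\bigl(\|\nabla\varphi_p\|_q^q-\beta\|\varphi_p\|_q^q\bigr)
	=
	|t|^q\,\|\varphi_p\|_q^q\,(\beta_*-\beta),
\end{equation*}
which is nonzero precisely when $\beta\neq\beta_*$. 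Hence the assumption $\beta\neq\beta_*$ imposed in case \ref{cor:thm12:4} is exactly what secures \eqref{resonant:G}, and Theorem~\ref{thm2} then applies and yields the required $k$ distinct pairs of critical points in $[\E<0]$. The main (indeed essentially the only) obstacle is to recognize that the resonance condition, which at first glance looks like an extra hypothesis, is automatically equivalent to $\beta\neq\beta_*$ by virtue of the simplicity of the first $p$-eigenvalue.
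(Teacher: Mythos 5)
Your proof is correct and follows exactly the route the paper intends: the paper states Corollary~\ref{cor:thm12} as a direct consequence of Theorems~\ref{thm1} and~\ref{thm2}, with cases \ref{cor:thm12:1} and \ref{cor:thm12:3} being instances of Theorem~\ref{thm1}~\ref{thm1:2} and \ref{thm1:2x}, and case \ref{cor:thm12:4} following from Theorem~\ref{thm2} with $l=2$. Your verification of the resonance condition \eqref{resonant:G} --- using the simplicity of $\lambda_1(p)$ so that $ES(p;\lambda_1(p))\setminus\{0\}=\{t\varphi_p:\,t\neq 0\}$ and the identity $G_\beta(t\varphi_p)=|t|^q\,\|\varphi_p\|_q^q\,(\beta_*-\beta)$ --- is precisely the intended justification that $\beta\neq\beta_*$ is equivalent to \eqref{resonant:G} in this case.
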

	It would be interesting to know whether the existence of $k$ distinct pairs of solutions of \eqref{eq:D} with $\alpha=\lambda_1(p)$ takes place for any $\beta>\lambda_k(q)$, $\beta \neq \beta_*$.
	
	\medskip
	Let us now discuss the multiplicity of solutions with positive energy.
	\begin{theorem}\label{thm3} 
		Let $l \in\mathbb{N}$. 
		Assume that $\alpha>\lambda_l(p)$ and $\beta < \beta_*(\alpha)$.
		Then $\E$ has at least $l$ distinct pairs of critical points in 
		$[\E>0]$, that is, \eqref{eq:D} has at least $l$ distinct pairs  of (nontrivial)
		solutions with positive energy. 
	\end{theorem}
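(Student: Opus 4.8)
The plan is to exploit the fibering structure of $\E$ along rays and reduce the problem to a Ljusternik--Schnirelmann argument for an even functional on a symmetric manifold. The two hypotheses enter geometrically as follows. Since $\alpha>\lambda_l(p)$, the minimax characterization \eqref{eq:lambdak} supplies a symmetric compact set $A_0\subset\W\setminus\{0\}$ with $\gamma(A_0)\ge l$ and $\max_{u\in A_0}\|\nabla u\|_p^p/\|u\|_p^p<\alpha$, i.e.\ $\Ha(u)<0$ on $A_0$. On the other hand, the hypothesis $\beta<\beta_*(\alpha)$ together with the definition \eqref{def:beta_*} yields $\Gb(u)>0$ for every $u\ne 0$ with $\Ha(u)\le 0$. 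Consequently, for each such $u$ the fibering map $t\mapsto\E(tu)=\frac{t^p}{p}\Ha(u)+\frac{t^q}{q}\Gb(u)$ vanishes at $t=0$, is positive for small $t>0$ (as $q<p$ and $\Gb(u)>0$), and tends to $-\infty$ as $t\to+\infty$; hence it has a unique critical point $t(u)>0$, which is its global maximum, and $\E(t(u)u)>0$.

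First I would set $U=\{u\in\W\setminus\{0\}:\Ha(u)<0\}$, an open symmetric cone, and note that $u\mapsto t(u)$ is even and, since the fibering maximum is nondegenerate on $U$ (because $\Ha<0$ there), of class $C^1$ by the implicit function theorem. The projection $u\mapsto t(u)u$ maps $U$ oddly and continuously onto the portion $\mathcal{N}^+$ of the Nehari set on which $\Ha<0$, where $\E$ reduces to $\frac{p-q}{pq}\,\Gb>0$; moreover $\mathcal{N}^+$ is a natural constraint, so its constrained critical points are genuine critical points of $\E$. Fixing the normalization $\|\nabla u\|_p=1$ gives a symmetric $C^1$ manifold $\mathcal{S}=U\cap\{\|\nabla u\|_p=1\}$, radially homeomorphic to $\mathcal{N}^+$, on which I define the even functional $\widehat{\E}(u)=\E(t(u)u)>0$. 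The radial projection $u\mapsto u/\|\nabla u\|_p$ sends $A_0$ oddly and continuously into $\mathcal{S}$ (using the $p$-homogeneity of $\Ha$), producing a symmetric compact set of genus $\ge l$. I would then define, for $j=1,\dots,l$,
\[
c_j=\inf_{A}\ \max_{u\in A}\widehat{\E}(u),
\]
the infimum taken over symmetric compact $A\subset\mathcal{S}$ with $\gamma(A)\ge j$. These values are finite (bounded above by $\max\widehat{\E}$ over the projected $A_0$) and nonnegative; their strict positivity will follow once compactness is in place.

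The main obstacle is the Palais--Smale condition for $\widehat{\E}$ on $\mathcal{S}$, equivalently for $\E$ on $\mathcal{N}^+$, and this is exactly where $\beta<\beta_*(\alpha)$ is indispensable. Given a $(\mathrm{PS})_c$ sequence, the associated points $v_n\in\mathcal{N}^+$ satisfy $\E(v_n)\to c$ and $\E'(v_n)\to0$, whence $\Ha(v_n)\to-\frac{pq}{p-q}c$ and $\Gb(v_n)\to\frac{pq}{p-q}c$. To prove boundedness in $\W$ I would argue by contradiction: if $\|\nabla v_n\|_p\to\infty$, then normalizing $w_n=v_n/\|v_n\|_p$ gives $\|\nabla w_n\|_p^p\to\alpha$ and $\Gb(w_n)\to0$; passing to a weak limit $w\ne0$ (with $\|w\|_p=1$) and using weak lower semicontinuity of the Dirichlet energies, one finds $\Ha(w)\le0$ and $\Gb(w)\le0$, contradicting $\Gb(w)>0$, which holds precisely because $\beta<\beta_*(\alpha)$. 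Once boundedness is secured, the $(S_+)$ property of $-\Delta_p-\Delta_q$ upgrades weak to strong convergence. Completeness of the sublevel sets of $\widehat{\E}$ causes no trouble, since $\widehat{\E}(u)\to+\infty$ as $u$ approaches the boundary $\{\Ha=0\}$ of $\mathcal{S}$, so $(\mathrm{PS})$ sequences stay in the interior.

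With $(\mathrm{PS})$ in hand, the classical symmetric Ljusternik--Schnirelmann theorem (see, e.g., \cite{rabinowitz,Struwe}) applies on the complete symmetric manifold $\mathcal{S}$: the values $0<c_1\le c_2\le\cdots\le c_l$ are critical values of $\widehat{\E}$, and whenever $c_j=\cdots=c_{j+m}$ the genus of the corresponding critical set is at least $m+1$, so that counting with multiplicity yields at least $l$ distinct antipodal pairs of critical points of $\widehat{\E}$. Transporting these through the odd homeomorphism $u\mapsto t(u)u$ gives at least $l$ distinct pairs of critical points of $\E$ lying on $\mathcal{N}^+$, all with positive energy $\E=c_j>0$. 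Since points of $\mathcal{N}^+$ that are critical for the constrained functional are critical points of $\E$ on $\W$, this produces the asserted $l$ distinct pairs of solutions of \eqref{eq:D} in $[\E>0]$.
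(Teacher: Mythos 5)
Your proposal is correct and follows essentially the same route as the paper's proof: take a genus-$l$ set supplied by the minimax definition \eqref{eq:lambdak} of $\lambda_l(p)$, push it onto the Nehari portion $\N\cap B_{\alpha,\beta}^+$ by the odd fiber map $u\mapsto t(u)u$, verify the Palais--Smale condition via the normalization/weak-limit contradiction that hinges on $\beta<\beta_*(\alpha)$ (Lemma~\ref{lem:HG<0}~\ref{lem:H<0}), run symmetric Ljusternik--Schnirelmann theory, and return to free critical points through the natural-constraint property (Proposition~\ref{prop:critical}). The only differences are cosmetic: you parametrize by the unit sphere in $[\Ha<0]$ and minimax the $0$-homogeneous fibered functional $\J$ there, handling incompleteness via the blow-up of $\J$ at $[\Ha=0]$, whereas the paper applies Szulkin's result (Theorem~\ref{thm:Szulkin}) directly to $\E$ restricted to $\N\cap B_{\alpha,\beta}^+$, obtaining closedness of the manifold and the strict positivity $c_1>0$ (which you only assert will follow ``once compactness is in place'') from Proposition~\ref{prop:LeastEnergy}.
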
 
	
	Similarly to the discussion after Theorem~\ref{thm1}, the result of Theorem~\ref{thm3} is known when $l=1$, see, e.g., \cite[Theorems~2.3 and 2.7]{BobkovTanaka2017}. 
	Moreover, at least for $\alpha>\lambda_2(p)$ and $\beta < \lambda_1(p)$, the existence of a sign-changing solution with positive energy is given by \cite[Theorem~1.5]{BobkovTanakaNodal1}.
	
	Note that, in contrast to Theorem~\ref{thm1}, 
	the point $(\alpha, \beta)$ satisfying the assumptions of Theorem~\ref{thm3} with $l \geq 2$ might be located above the quadrant $(\lambda_1(p),+\infty)\times(-\infty,\lambda_1(q))$ already in the case $N=1$. 
	Indeed, in the one-dimensional case, for any $l \geq 2$ there exist $q_0$, $p_0$ with $1<q_0<p_0$ such that $\lambda_l(p) < \alpha_*$ provided $1<q<q_0$ and $p>p_0$, see \cite[Lemma~A.3]{BobkovTanaka2017}.
	Consequently, in view of the properties of $\beta_*(\alpha)$ (see Section~\ref{sec:notations}), there exist $\alpha$, $\beta$ such that $\lambda_l(p) < \alpha < \alpha_*$ and $\lambda_1(q) < \beta < \beta_*(\alpha)$.
	We also refer to Remark~\ref{rem:astar} below for the case $N \geq 2$.
	
	\medskip
	Combining Theorems~\ref{thm1}~\ref{thm1:3} and \ref{thm3}, we have the following result which extends \cite[Theorem~2.7]{BobkovTanaka2017}.
	\begin{corollary} 
		Let $k,l \in \mathbb{N}$.
		Assume that $\lambda_l(p) < \alpha < \alpha_*$ and  $\lambda_k(q)<\beta < \beta_*(\alpha)$.
		Then \eqref{eq:D} has at least $k+l$ distinct pairs of 
		nontrivial solutions. 
	\end{corollary}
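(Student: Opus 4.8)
The plan is to obtain the two families of solutions from Theorem~\ref{thm1}~\ref{thm1:3} and Theorem~\ref{thm3} separately, and then argue that they are automatically disjoint because each lives on one side of the zero energy level. First I would check that the hypotheses of both theorems follow from the standing assumptions $\lambda_l(p)<\alpha<\alpha_*$ and $\lambda_k(q)<\beta<\beta_*(\alpha)$.

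For the negative-energy part, Theorem~\ref{thm1}~\ref{thm1:3} demands $\lambda_1(p)<\alpha<\alpha_*$ together with $\lambda_k(q)<\beta\le\beta_*(\alpha)$. Since the variational eigenvalues $\{\lambda_j(p)\}$ are nondecreasing in $j$, one has $\lambda_l(p)\ge\lambda_1(p)$ for every $l\ge1$, so $\alpha>\lambda_l(p)\ge\lambda_1(p)$; the bounds $\alpha<\alpha_*$ and $\lambda_k(q)<\beta<\beta_*(\alpha)$ are assumed, and in particular $\beta\le\beta_*(\alpha)$ holds as required. This furnishes at least $k$ distinct pairs of critical points in $[\E<0]$. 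For the positive-energy part, Theorem~\ref{thm3} asks only for $\alpha>\lambda_l(p)$ and $\beta<\beta_*(\alpha)$, both of which are among the hypotheses, producing at least $l$ distinct pairs of critical points in $[\E>0]$.

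The final step is to see that no pair is counted twice. As $\E$ is even, each critical point $u$ is accompanied by $-u$ at the same energy, so the relevant objects are sets $\{u,-u\}$ carrying a well-defined energy sign. Every pair from the first application satisfies $\E<0$ and every pair from the second satisfies $\E>0$, and an equality $\{u,-u\}=\{v,-v\}$ would force $\E(u)=\E(v)$, which is impossible across the sign divide. Hence the $k+l$ pairs are mutually distinct, giving at least $k+l$ distinct pairs of nontrivial solutions of \eqref{eq:D}. I do not expect a genuine obstacle here: the only point requiring attention is the eigenvalue bookkeeping showing that the single inequality $\alpha>\lambda_l(p)$ simultaneously clears the $\lambda_1(p)$-threshold needed by Theorem~\ref{thm1} and the $\lambda_l(p)$-threshold needed by Theorem~\ref{thm3}, after which the statement is a direct superposition of the two results.
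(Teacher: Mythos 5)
Your proposal is correct and follows exactly the route the paper intends: the corollary is stated there as a direct combination of Theorem~\ref{thm1}~\ref{thm1:3} (giving $k$ pairs in $[\E<0]$, using $\alpha>\lambda_l(p)\ge\lambda_1(p)$) and Theorem~\ref{thm3} (giving $l$ pairs in $[\E>0]$), with the two families disjoint by the sign of the energy and nontrivial since $\E(0)=0$. Your verification of the hypotheses and the disjointness argument is precisely the bookkeeping the paper leaves implicit.
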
 
	
	We schematically depict the results of Theorems~\ref{thm1}, \ref{thm2}, \ref{thm3} on Figure~\ref{fig:main}.

	
	\section{Preliminaries}\label{sec:prelim0}
	In this section, we collect several auxiliary results needed for the proofs of the main theorems.
	
	\subsection{Properties of \texorpdfstring{$\beta_*(\alpha)$}{beta*(alpha)}}\label{sec:prelim}
	Let us provide several properties of the mapping $\beta_*(\alpha)$ defined in \eqref{def:beta_*}, in addition to those stated in \cite[Proposition~7]{BobkovTanaka2017}.
	For visual convenience, we note that  $\beta_*(\alpha)$ can be equivalently defined as
	\begin{equation}\label{def:beta_*2}
		\beta_*(\alpha) = \inf\left\{ \frac{\|\nabla u\|_q^q}{\|u\|_q^q}:~
		u\in\W\setminus \{0\} \text{ and } \frac{\|\nabla u\|_p^p}{\|u\|_p^p} \le \alpha \right\}. 
	\end{equation}
	
	\begin{lemma}\label{lem:HG<0} 
		Let $\alpha\ge \lambda_1(p)$.
		Then the following assertions hold:
		\begin{enumerate}[label={\rm(\roman*)}]
			\item\label{lem:H<0} 
			If $\beta<\beta_*(\alpha)$, then 
			$[H_\alpha\le 0]\subset \{0\}\cup[G_\beta>0]$, whence 
			$[H_\alpha=0] \cap [\E<0] =\emptyset$. 
			\item\label{lem:H<02} 
			If $\beta \leq \beta_*(\alpha)$, then 
			$[G_\beta < 0]\subset [H_\alpha > 0]$.
			\item\label{lem:H<03} 
			If $\lambda_1(q) < \beta \leq \beta_*(\alpha)$, then 
			$[G_\beta \leq 0]\subset [H_\alpha \geq 0]$, whence $[\Gb = 0] \cap [\E<0] = \emptyset$.
		\end{enumerate}
	\end{lemma}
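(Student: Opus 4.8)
The plan is to read everything off the infimum definition of $\beta_*(\alpha)$, most conveniently in the form \eqref{def:beta_*2}, and to isolate the delicate boundary case $\beta=\beta_*(\alpha)$ in \ref{lem:H<03}. Throughout I abbreviate $R_r(u):=\|\nabla u\|_r^r/\|u\|_r^r$ for $r\in\{p,q\}$, so that $H_\alpha(u)\le0\iff R_p(u)\le\alpha$, $G_\beta(u)\le0\iff R_q(u)\le\beta$, and \eqref{def:beta_*2} reads $\beta_*(\alpha)=\inf\{R_q(u):u\in\W\setminus\{0\},\ R_p(u)\le\alpha\}$. For \ref{lem:H<0} I would take any $u\in[H_\alpha\le0]\setminus\{0\}$; since $R_p(u)\le\alpha$, $u$ is admissible in \eqref{def:beta_*2}, so $R_q(u)\ge\beta_*(\alpha)>\beta$, and multiplying by $\|u\|_q^q>0$ gives $G_\beta(u)>0$. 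Part \ref{lem:H<02} is the same computation read contrapositively: if $G_\beta(u)<0$ then $u\ne0$ and $R_q(u)<\beta\le\beta_*(\alpha)$, so $u$ cannot be admissible in \eqref{def:beta_*2}, forcing $R_p(u)>\alpha$, i.e. $H_\alpha(u)>0$. The ``whence'' clause of \ref{lem:H<0} is then immediate from $\E=\tfrac1pH_\alpha+\tfrac1qG_\beta$: on $[H_\alpha=0]$ part \ref{lem:H<0} yields $\E(u)=\tfrac1qG_\beta(u)\ge0$, so $[H_\alpha=0]\cap[\E<0]=\emptyset$.

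For \ref{lem:H<03}, note first that when $\beta<\beta_*(\alpha)$ the inclusion $[G_\beta\le0]\subset[H_\alpha\ge0]$ is already contained in \ref{lem:H<0} (which even gives $G_\beta>0$ on $[H_\alpha\le0]\setminus\{0\}$), so the only genuine content is the equality case $\beta=\beta_*(\alpha)$, where by hypothesis $\beta_*(\alpha)>\lambda_1(q)$. I argue by contradiction: suppose some $u\ne0$ has $H_\alpha(u)<0$ and $G_\beta(u)\le0$, i.e. $R_p(u)<\alpha$ and $R_q(u)\le\beta=\beta_*(\alpha)$. Since $u$ is admissible in \eqref{def:beta_*2}, the reverse inequality $R_q(u)\ge\beta_*(\alpha)$ also holds, whence $R_q(u)=\beta_*(\alpha)$. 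The crucial step is to manufacture an admissible competitor with strictly smaller $q$-Rayleigh quotient, contradicting the infimum. I would pass to $w:=|u|$, for which $\|\nabla w\|_r=\|\nabla u\|_r$ and $\|w\|_r=\|u\|_r$ ($r\in\{p,q\}$), so $R_p(w)=R_p(u)<\alpha$ and $R_q(w)=\beta_*(\alpha)$. The point of replacing $u$ by $w$ is that $w\ge0$, $w\ne0$ cannot be a critical point of $R_q$ on $\W\setminus\{0\}$: such a critical point is an eigenfunction of $-\Delta_q$ with eigenvalue $R_q(w)=\beta_*(\alpha)>\lambda_1(q)$, whereas a nonnegative nontrivial eigenfunction must be the first one and correspond to $\lambda_1(q)$ (the simplicity and strict sign of $\varphi_q$ recalled after \eqref{eq:lambda1r}). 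Since $R_q\in C^1(\W\setminus\{0\})$ — using the continuous embedding $W_0^{1,p}\hookrightarrow W_0^{1,q}$ — we get $R_q'(w)\ne0$, so there is $\phi\in\W$ with $\langle R_q'(w),\phi\rangle<0$. For small $t>0$ the function $w+t\phi$ then satisfies $R_q(w+t\phi)<\beta_*(\alpha)$ by the first-order expansion, while $R_p(w+t\phi)<\alpha$ by continuity of $R_p$ at $w$ (the inequality there being strict); this admissible competitor with $q$-quotient below $\beta_*(\alpha)$ is the desired contradiction. With $[G_\beta\le0]\subset[H_\alpha\ge0]$ established, any $u\in[G_\beta=0]$ has $H_\alpha(u)\ge0$, hence $\E(u)=\tfrac1pH_\alpha(u)\ge0$ and $[G_\beta=0]\cap[\E<0]=\emptyset$.

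I expect the whole difficulty to sit in this boundary case: parts \ref{lem:H<0}, \ref{lem:H<02} and the strict part of \ref{lem:H<03} are one-line consequences of the definition, whereas excluding $H_\alpha(u)<0$ on $[G_\beta\le0]$ when $\beta=\beta_*(\alpha)$ genuinely uses the hypothesis $\beta>\lambda_1(q)$ through the sign and simplicity of the first $q$-eigenfunction. The technical care is only in justifying that $w=|u|$ is not a critical point of $R_q$ and that the descent direction $\phi$ keeps the strict constraint $R_p<\alpha$; both are routine given $R_q\in C^1(\W\setminus\{0\})$ and the strictness $R_p(w)<\alpha$. Should \cite{BobkovTanaka2017} already record that the infimum $\beta_*(\alpha)$ is attained with active constraint $H_\alpha=0$ whenever $\beta_*(\alpha)>\lambda_1(q)$, this paragraph collapses to a citation.
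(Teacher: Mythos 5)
Your proof is correct and follows essentially the same route as the paper: parts \ref{lem:H<0} and \ref{lem:H<02} by direct inspection of \eqref{def:beta_*2}, and for the boundary case of \ref{lem:H<03} the observation that a nonnegative minimizer of $\beta_*(\alpha)$ lying in the open set $[H_\alpha<0]$ would have to be a nonnegative eigenfunction of $-\Delta_q$ with eigenvalue $\beta>\lambda_1(q)$, contradicting the sign-constancy of the first eigenfunction. The only cosmetic difference is that you unfold the ``interior minimizer $\Rightarrow$ critical point'' step into an explicit descent-direction argument (contradicting the infimum), whereas the paper invokes the Euler--Lagrange equation directly and reduces via \ref{lem:H<02} to the case $G_\beta(u)=0$; the two arrangements use identical ingredients.
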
 
	\begin{proof}
		The assertions \ref{lem:H<0} and \ref{lem:H<02}  directly follow from the definition of $\beta_*(\alpha)$.
		Let us prove the assertion \ref{lem:H<03}. 
		In view of \ref{lem:H<02}, it is sufficient to show that $[G_\beta = 0]\subset [H_\alpha \geq 0]$.
		Suppose, by contradiction, that there exists $u \in \W$ such that $G_\beta(u) = 0$ and $H_\alpha(u) < 0$. 
		Clearly, this yields $\beta=\beta_*(\alpha)$ by the definition~\eqref{def:beta_*} of $\beta_*(\alpha)$.
		Since $\Gb$ and $\Ha$ are even, we may assume that $u \geq 0$ in $\Omega$.
		That is, $u$ is a nonnegative minimizer of $\beta_*(\alpha)$. 
		Since $[\Ha<0]$ is an open neighborhood of $u$, 
		the inequality $\Ha(u)<0$ means that $u$ is an interior point of the admissible set of $\beta_*(\alpha)$.
		This implies that $\langle\Gb'(u),\xi\rangle=0$ for any $\xi \in C_0^{\infty}(\Omega)$, 
		and hence $u$ is a nonnegative eigenfunction of the $q$-Laplacian.
		However, it contradicts the assumption $\lambda_1(q)<\beta$ and the fact that the only sign-constant eigenfunction is the first one, see Section~\ref{sec:notations}.
	\end{proof}

	Let us introduce a related family of critical points: 
	\begin{equation}\label{def:alpha_*}
		\alpha_*(\beta) = \inf\left\{ \frac{\|\nabla u\|_p^p}{\|u\|_p^p}:~
		u\in\W\setminus \{0\} \text{ and } G_\beta(u)\le 0\right\}, 
	\end{equation}  
	and set $\alpha_*(\beta) = +\infty$ if $\beta<\lambda_1(q)$. 
	Note that $\alpha_*(\beta)$ can be equivalently defined as
	\begin{equation}\label{def:alpha_*2}
		\alpha_*(\beta) = \inf\left\{ \frac{\|\nabla u\|_p^p}{\|u\|_p^p}:~\,
		u\in\W\setminus \{0\} \text{ and } \frac{\|\nabla u\|_q^q}{\|u\|_q^q} \leq \beta \right\}.
	\end{equation} 
	It is evident that $\alpha_*(\beta)\ge \lambda_1(p)$ for all $\beta\ge \lambda_1(q)$.
	Since the first eigenfunction $\varphi_p$ of $-\Delta_p$ is an admissible function for the definition \eqref{def:alpha_*} whenever $\beta \ge \beta_*$, we have $\alpha_*(\beta)=\lambda_1(p)$ for any $\beta\ge \beta_*$. 
	In the following lemma, we show that 
	$\alpha_*(\beta)$ and $\beta_*(\alpha)$ describe the same curve in the quadrant $(\lambda_1(p),+\infty)\times (\lambda_1(q),+\infty)$ of the $(\alpha,\beta)$-plane.
	As a consequence, all the properties of $\beta_*(\alpha)$ in this quadrant are directly transferred to that of $\alpha_*(\beta)$.
	\begin{lemma}\label{lem:curvebelow} 
		Let $(\alpha,\beta)\in\mathbb{R}^2$. 
		Then the following cases are equivalent: 
		\begin{enumerate}[label={\rm(\roman*)}] 
			\item\label{lem:curvebelow:1}  $\beta>\lambda_1(q)$ 
			and $\lambda_1(p)< \alpha \leq \alpha_*(\beta)$; 
			\item\label{lem:curvebelow:2}  $\alpha>\lambda_1(p)$ 
			and $\lambda_1(q)< \beta \leq \beta_*(\alpha)$. 
		\end{enumerate}
	\end{lemma}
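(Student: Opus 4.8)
The two conditions \ref{lem:curvebelow:1} and \ref{lem:curvebelow:2} both contain the restriction $\alpha>\lambda_1(p)$ and $\beta>\lambda_1(q)$, so the equivalence reduces to the symmetric claim that, under these two inequalities, $\beta\le\beta_*(\alpha)$ holds if and only if $\alpha\le\alpha_*(\beta)$. The plan is to reformulate each of these inequalities as a statement about level sets and then to prove the two implications separately, the first directly from Lemma~\ref{lem:HG<0}\ref{lem:H<03} and the second from its mirror image. Using the equivalent definitions \eqref{def:beta_*2} and \eqref{def:alpha_*2}, one checks immediately that $\beta\le\beta_*(\alpha)$ is equivalent to $[\Gb<0]\cap[\Ha\le0]=\emptyset$, while $\alpha\le\alpha_*(\beta)$ is equivalent to $[\Ha<0]\cap[\Gb\le0]=\emptyset$; indeed, in each case the inequality says exactly that $\beta$ (resp.\ $\alpha$) is a lower bound for the infimum defining $\beta_*(\alpha)$ (resp.\ $\alpha_*(\beta)$).

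For the implication \ref{lem:curvebelow:2}$\Rightarrow$\ref{lem:curvebelow:1}, I would simply invoke Lemma~\ref{lem:HG<0}\ref{lem:H<03}, whose hypotheses $\alpha\ge\lambda_1(p)$ and $\lambda_1(q)<\beta\le\beta_*(\alpha)$ are guaranteed by \ref{lem:curvebelow:2}. It yields $[\Gb\le0]\subset[\Ha\ge0]$, that is $[\Ha<0]\cap[\Gb\le0]=\emptyset$, which by the reformulation above is precisely $\alpha\le\alpha_*(\beta)$; combined with the inherited $\alpha>\lambda_1(p)$ and $\beta>\lambda_1(q)$ this gives \ref{lem:curvebelow:1}.

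The reverse implication is obtained by interchanging the roles of $(p,\Ha,\varphi_p,\lambda_1(p))$ and $(q,\Gb,\varphi_q,\lambda_1(q))$, so the plan is to establish the analogue of Lemma~\ref{lem:HG<0}\ref{lem:H<03}: if $\lambda_1(p)<\alpha\le\alpha_*(\beta)$, then $[\Ha\le0]\subset[\Gb\ge0]$. The inclusion $[\Ha<0]\subset[\Gb>0]$ is immediate from $\alpha\le\alpha_*(\beta)$ via \eqref{def:alpha_*2}, so only the boundary points with $\Ha(u)=0$ require attention. If some such $u$ had $\Gb(u)<0$, then $u$ would be admissible for \eqref{def:alpha_*2} with its $p$-Rayleigh quotient equal to $\alpha$, forcing $\alpha=\alpha_*(\beta)$ and making $u$ a minimizer at which the constraint is inactive; hence $u$ would be a free critical point of the $p$-Rayleigh quotient, that is, an eigenfunction of $-\Delta_p$ with eigenvalue $\alpha$, and passing to $|u|$ one would obtain a sign-constant such eigenfunction, necessarily the first one, so that $\alpha=\lambda_1(p)$, contradicting $\alpha>\lambda_1(p)$. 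The resulting inclusion reads $[\Gb<0]\cap[\Ha\le0]=\emptyset$, i.e.\ $\beta\le\beta_*(\alpha)$, and together with the inherited inequalities gives \ref{lem:curvebelow:2}.

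The only genuinely delicate point is this boundary analysis: away from the critical curve (strict inequalities) the equivalence is a soft consequence of the monotone definitions, but on the curve, where $\alpha=\alpha_*(\beta)$, one must rule out that the minimizer in \eqref{def:alpha_*2} is an interior, unconstrained critical point, and this is exactly where the simplicity and the sign-constancy characterization of the first eigenfunction of $-\Delta_p$ enter, mirroring the role of the corresponding facts for $-\Delta_q$ in Lemma~\ref{lem:HG<0}\ref{lem:H<03}. I would also verify that interchanging $p$ and $q$ causes no difficulty despite the ambient space being $\W=W_0^{1,p}$: since in the reverse direction the objective is the $p$-quotient on exactly the space $\W$, the Euler--Lagrange relation is tested against all of $\W$ and directly produces a genuine eigenfunction of $-\Delta_p$, so this direction is in fact slightly cleaner than the original and requires no density argument.
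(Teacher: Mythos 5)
Your proposal is correct and takes essentially the same route as the paper: the heart of both is the constrained-minimization argument in which the boundary case forces $\alpha=\alpha_*(\beta)$ with an inactive constraint (since $[\Gb<0]$ is open), so the nonnegative minimizer is a free critical point of the $p$-Rayleigh quotient, hence a sign-constant eigenfunction of $-\Delta_p$, giving $\alpha=\lambda_1(p)$ and a contradiction --- which is precisely the paper's proof that \ref{lem:curvebelow:1} implies \ref{lem:curvebelow:2}. Your only deviation is cosmetic: for the converse you cite Lemma~\ref{lem:HG<0}~\ref{lem:H<03} instead of repeating the mirror argument, whereas the paper disposes of that direction with ``arguing in much the same way''; both amount to the same interior-point reasoning.
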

	\begin{proof} 
		Suppose first that under the condition \ref{lem:curvebelow:1} there exists some pair 
		$(\alpha,\beta)$ violating \ref{lem:curvebelow:2},
		that is, $\beta>\beta_*(\alpha)$.
		By the definition~\eqref{def:beta_*} of $\beta_*(\alpha)$, this inequality yields the existence of $u \in \W$ such that $H_\alpha(u) \leq 0$ and $G_\beta(u) < 0$.
		Consequently, $u$ is an admissible function for the definition \eqref{def:alpha_*} of $\alpha_*(\beta)$, which gives
		$$
		\frac{\|\nabla u\|_p^p}{\|u\|_p^p}
		\leq
		\alpha 
		\leq 
		\alpha_*(\beta)
		\leq
		\frac{\|\nabla u\|_p^p}{\|u\|_p^p},
		$$
		i.e., $u$ is a minimizer of $\alpha=\alpha_*(\beta)$.
		Thanks to the evenness of $H_\alpha$ and $G_\beta$, we can assume that $u$ is nonnegative in $\Omega$, without loss of generality.
		Since $[\Gb<0]$ is an open neighborhood of $u$, 
		$u$ is an interior point of the admissible set of $\alpha_*(\beta)$.
		We conclude that $H_\alpha'(u)=0$ in $(\W)^*$, that is, $\alpha \in \sigma(-\Delta_p)$.
		Recalling that $u \geq 0$, we have $\alpha = \lambda_1(p)$, which contradicts our assumption $\alpha > \lambda_1(p)$.
		Thus, we have shown that \ref{lem:curvebelow:1} implies~\ref{lem:curvebelow:2}.
		
		Arguing in much the same way as above, one can justify that \ref{lem:curvebelow:2} implies \ref{lem:curvebelow:1}, which completes the proof of their equivalence.
	\end{proof} 
	
	In view of Lemma~\ref{lem:curvebelow}, Theorem~\ref{thm1} can be reformulated in terms of $\alpha_*(\beta)$, which might be instructive from the point of view of the bifurcation theory, see Figure~\ref{fig:perturb}. 
	In particular, a simplified version of Theorem~\ref{thm1} can be stated as follows (cf.\ Theorem~\ref{thm3}).
	\begin{theorem}\label{thm1-alph} 
		Let $k\in\mathbb{N}$. Assume that $\beta>\lambda_k(q)$ and $\alpha < \alpha_*(\beta)$.
		Then $\E$ has at least $k$ distinct pairs of critical points in 
		$[\E<0]$. 
	\end{theorem}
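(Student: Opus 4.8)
The plan is to reduce Theorem~\ref{thm1-alph} to Theorem~\ref{thm1}, using Lemma~\ref{lem:curvebelow} to pass between the critical curves $\beta \mapsto \alpha_*(\beta)$ and $\alpha \mapsto \beta_*(\alpha)$, together with the recorded monotonicity and threshold properties of these functions. Observe first that the hypothesis $\beta > \lambda_k(q)$ already implies $\beta > \lambda_1(q)$ (since $\lambda_1(q) \le \lambda_k(q)$) and makes the condition $\lambda_k(q) < \beta$ appearing in every case of Theorem~\ref{thm1} automatic. Thus the entire argument reduces to locating $\alpha$ relative to $\lambda_1(p)$ and $\alpha_*$, and extracting the correct inequality for $\beta$ in terms of $\beta_*(\alpha)$.

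I would split into three cases. If $\alpha < \lambda_1(p)$, then assumption~\ref{thm1:1} of Theorem~\ref{thm1} holds verbatim and there is nothing more to do. If $\alpha = \lambda_1(p)$, then the strict inequality $\alpha < \alpha_*(\beta)$ forces $\alpha_*(\beta) > \lambda_1(p)$; since $\alpha_*(\beta) = \lambda_1(p)$ whenever $\beta \ge \beta_*$, its contrapositive yields $\beta < \beta_*$, so that $\lambda_k(q) < \beta < \beta_*$ and assumption~\ref{thm1:2} applies. Finally, if $\lambda_1(p) < \alpha$, then $\lambda_1(p) < \alpha < \alpha_*(\beta)$ together with $\beta > \lambda_1(q)$ is precisely condition~\ref{lem:curvebelow:1} of Lemma~\ref{lem:curvebelow}, whose equivalent condition~\ref{lem:curvebelow:2} gives $\beta \le \beta_*(\alpha)$; combined with $\beta > \lambda_1(q)$ this shows $\beta_*(\alpha) > \lambda_1(q)$, and since $\beta_*(\alpha) = \lambda_1(q)$ for all $\alpha \ge \alpha_*$ we conclude $\alpha < \alpha_*$. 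Hence $\lambda_1(p) < \alpha < \alpha_*$ and $\lambda_k(q) < \beta \le \beta_*(\alpha)$, which is assumption~\ref{thm1:3}.

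The only step requiring genuine care is the last case, where the single clean inequality $\alpha < \alpha_*(\beta)$ must be converted, via the curve-equivalence of Lemma~\ref{lem:curvebelow}, into the two separate bounds $\beta \le \beta_*(\alpha)$ and $\alpha < \alpha_*$ demanded by Theorem~\ref{thm1}~\ref{thm1:3}; both follow from the threshold behaviour of $\beta_*(\alpha)$ and $\alpha_*(\beta)$ rather than from any new estimate, so I expect no substantive obstacle. It is worth noting that this simplified formulation never meets the boundary situation $\beta = \beta_*$ treated separately in Theorem~\ref{thm1}~\ref{thm1:2x}: indeed, at $\alpha = \lambda_1(p)$ and $\beta = \beta_*$ one has $\alpha_*(\beta) = \lambda_1(p) = \alpha$, so the strict hypothesis $\alpha < \alpha_*(\beta)$ fails there. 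Consequently no supplementary assumption such as $p > 2q$ is needed, and since $\alpha_*(\beta) \ge \lambda_1(p)$ always holds, the three cases above exhaust every admissible pair $(\alpha,\beta)$.
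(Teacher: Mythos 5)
Your proposal is correct and follows the paper's own route: the paper presents Theorem~\ref{thm1-alph} precisely as a reformulation of Theorem~\ref{thm1} obtained ``in view of Lemma~\ref{lem:curvebelow}'', which is exactly your reduction, with the case analysis in $\alpha$ and the threshold facts $\alpha_*(\beta)=\lambda_1(p)$ for $\beta\ge\beta_*$ and $\beta_*(\alpha)=\lambda_1(q)$ for $\alpha\ge\alpha_*$ merely made explicit. Your side observation that the hypothesis $\alpha<\alpha_*(\beta)$ never reaches the boundary case~\ref{thm1:2x}, so that no assumption $p>2q$ is needed, is also accurate.
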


	\begin{figure}[!ht]
		\center{
			\includegraphics[scale=0.65]{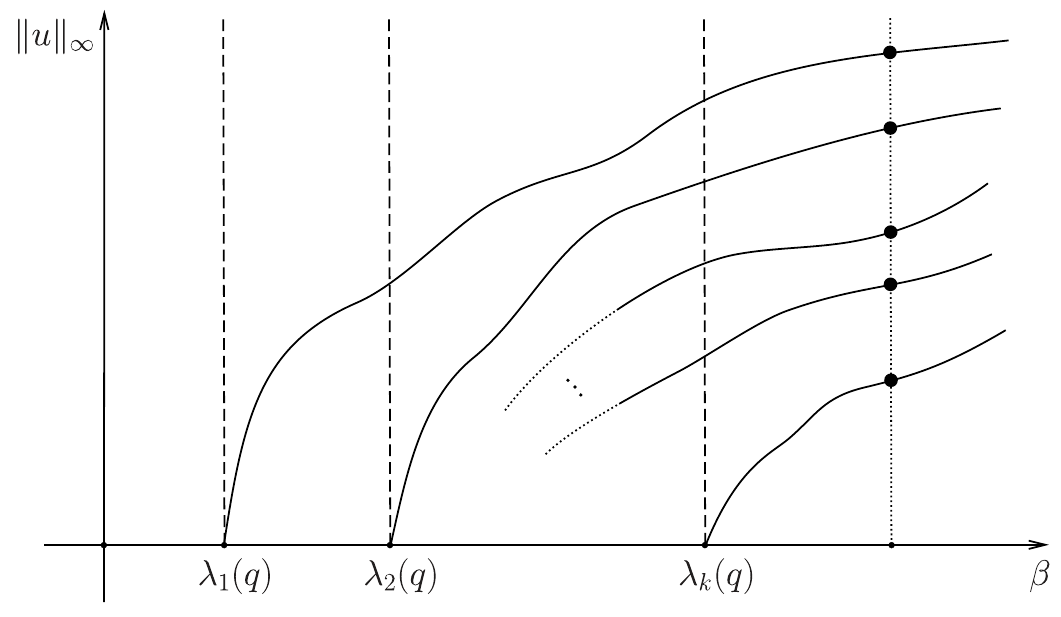}
		}
		\caption{A schematic plot of branches of solutions to the (perturbed) problem \eqref{eq:D} emanating from eigenvalues of the (unperturbed) $q$-Laplacian assuming $\alpha < \alpha_*(\beta)$.}
		\label{fig:perturb}
	\end{figure}

	\subsection{Properties of the Nehari manifold}\label{sec:nehari}
	
	Let us introduce the Nehari manifold for the problem \eqref{eq:D} in the standard way:
	\begin{equation*}\label{def:Nehari} 
		\N =\{ v\in\W\setminus\{0\}:~
		\langle \E^\prime(v),v \rangle=H_\alpha(v)+G_\beta(v)=0\,\}. 
	\end{equation*}
	Defining a functional $\F \in C^1(\W,\mathbb{R})$ as 
	\begin{equation}\label{def:F} 
		\F(u) = H_\alpha (u)+G_\beta(u), 
	\end{equation}
	we have $\N=[\F=0]\setminus \{0\}$, in the notation of Section~\ref{sec:notations}.
	Moreover, we will denote by $T_u \N$ the tangent space of $\N$ at $u \in \N$, and by $\EN$ the restriction of $\E$ to $\N$, that is, $\EN =\E\big|_{\N}$. 
	
	Let us provide a few properties of $\N$, some of which are basic and known. 
	Primarily, let us observe that if $u \in \N$, then
	\begin{equation}\label{eq:EHG}
		\E(u) 
		= 
		-\frac{p-q}{pq} \Ha(u)
		=
		\frac{p-q}{pq} \Gb(u).
	\end{equation}
	\begin{proposition}[\protect{\cite[Proposition~6]{BobkovTanaka2015}}]\label{prop:minpoint}
		Let $u \in \W$. If $H_\alpha(u) \cdot G_\beta(u) < 0$, then there exists a unique extrema point 
		\begin{equation}\label{tu}
			t(u) = \left(\frac{-G_\beta(u)}{H_\alpha(u)}\right)^{\frac{1}{p-q}} = 
			\frac{|G_\beta(u)|^{\frac{1}{p-q}}}{|H_\alpha(u)|^{\frac{1}{p-q}}}
		\end{equation}
		of $\E(t u)$ with respect to $t > 0$, and $t(u) u \in \N$.
		More precisely, if $\Ha(u) < 0 < \Gb(u)$, 
		then $\E(t(u) u) >0$ and $t(u)$ is a unique maximum point of $t \mapsto \E(t u)$ on $(0,+\infty)$, while if $\Ha(u) > 0 > \Gb(u)$, 
		then $\E(t(u) u) <0$ and $t(u)$ is a unique minimum point of $t \mapsto \E(t u)$ on $(0,+\infty)$.
		Moreover,
		\begin{equation}\label{eq:J}
			\J(u) 
			:= 
			\E(t(u) u) 
			= -
			\mathrm{sign}(H_\alpha(u))\,
			\frac{p-q}{pq}\, \frac{|G_\beta(u)|^{\frac{p}{p-q}}}{|H_\alpha(u)|^{\frac{q}{p-q}}},
		\end{equation}
		where the so-called fibered functional $\J$ is $0$-homogeneous.
	\end{proposition}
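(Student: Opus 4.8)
The plan is to reduce everything to the study of the single-variable fiber map $\phi(t) := \E(tu)$ on $(0,+\infty)$. Since $\Ha$ is positively $p$-homogeneous and $\Gb$ is positively $q$-homogeneous, i.e.\ $\Ha(tu)=t^p\Ha(u)$ and $\Gb(tu)=t^q\Gb(u)$ for $t>0$, the definition of $\E$ gives
\begin{equation*}
	\phi(t)=\frac{t^p}{p}\,\Ha(u)+\frac{t^q}{q}\,\Gb(u),
	\qquad t>0,
\end{equation*}
so that $\phi\in C^1(0,+\infty)$ and
\begin{equation*}
	\phi'(t)=t^{p-1}\Ha(u)+t^{q-1}\Gb(u)=t^{q-1}\bigl(t^{p-q}\Ha(u)+\Gb(u)\bigr).
\end{equation*}

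First I would locate the critical points. For $t>0$ the factor $t^{q-1}$ is positive, so $\phi'(t)=0$ is equivalent to $t^{p-q}\Ha(u)=-\Gb(u)$. Because $p-q>0$, the map $t\mapsto t^{p-q}$ is a strictly increasing bijection of $(0,+\infty)$ onto itself; hence this equation has a necessarily unique positive solution if and only if $-\Gb(u)/\Ha(u)>0$, i.e.\ precisely when $\Ha(u)\cdot\Gb(u)<0$, and in that case the solution is exactly the $t(u)$ of \eqref{tu}. Since $u\ne0$ and $t(u)>0$, the chain-rule identity $\langle\E'(tu),tu\rangle=t\,\phi'(t)$ evaluated at $t=t(u)$ shows $t(u)u\in\N$.

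Next I would read off the nature of the extremum from the monotone factor $g(t):=t^{p-q}\Ha(u)+\Gb(u)$, whose sign coincides with that of $\phi'$. If $\Ha(u)<0<\Gb(u)$, then $g$ is strictly decreasing with $g(0^+)=\Gb(u)>0$, so $\phi'$ passes from positive to negative at $t(u)$, giving a strict global maximum; if $\Ha(u)>0>\Gb(u)$, then $g$ is strictly increasing with $g(0^+)=\Gb(u)<0$, giving a strict global minimum. To obtain the value, I would substitute the relation $t(u)^{p-q}\Ha(u)=-\Gb(u)$ into $\phi(t(u))$, which collapses the two terms to
\begin{equation*}
	\J(u)=\phi(t(u))=\frac{p-q}{pq}\,t(u)^q\,\Gb(u).
\end{equation*}
This already yields $\J(u)>0$ in the maximum case and $\J(u)<0$ in the minimum case. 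Rewriting $t(u)^q=\bigl(|\Gb(u)|/|\Ha(u)|\bigr)^{q/(p-q)}$ and using $q/(p-q)+1=p/(p-q)$ together with $\mathrm{sign}(\Gb(u))=-\mathrm{sign}(\Ha(u))$ converts this into the stated closed form \eqref{eq:J}.

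Finally, for the $0$-homogeneity of $\J$, the cleanest route is to note that $t(su)^{p-q}=-\Gb(su)/\Ha(su)=s^{q-p}\bigl(-\Gb(u)/\Ha(u)\bigr)$, whence $t(su)=s^{-1}t(u)$ and therefore $t(su)(su)=t(u)u$, giving $\J(su)=\E(t(su)(su))=\E(t(u)u)=\J(u)$ for every $s>0$; alternatively this is immediate from \eqref{eq:J} once the powers of $s$ are seen to cancel. The computations are elementary one-variable calculus, so there is no serious analytic obstacle; the only point demanding care is the sign bookkeeping in the last step, namely unifying the maximum ($\Ha(u)<0$) and minimum ($\Ha(u)>0$) cases into the single expression \eqref{eq:J} through the factor $-\mathrm{sign}(\Ha(u))$.
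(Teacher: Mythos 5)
Your proof is correct and complete: the reduction to the fiber map $\phi(t)=\E(tu)=\frac{t^p}{p}\Ha(u)+\frac{t^q}{q}\Gb(u)$, the factorization $\phi'(t)=t^{q-1}\bigl(t^{p-q}\Ha(u)+\Gb(u)\bigr)$, the sign analysis, and the substitution yielding $\J(u)=\frac{p-q}{pq}\,t(u)^q\,\Gb(u)$ together with the scaling identity $t(su)=s^{-1}t(u)$ all check out, including the sign bookkeeping via $\mathrm{sign}(\Gb(u))=-\mathrm{sign}(\Ha(u))$. The paper does not reprove this proposition but imports it from \cite{BobkovTanaka2015}, and your elementary one-variable argument is precisely the standard proof of such fibering statements, so there is no substantive difference in approach.
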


	\begin{proposition}[{\cite[Lemma 2]{BobkovTanaka2015}}]\label{prop:critical} 
		Assume that $u\in\N$ satisfies $\E(u)\not=0$ (or, equivalently, $\Ha(u) \neq 0$ or $\Gb(u) \neq 0$, see \eqref{eq:EHG}). 
		If $u$ is a critical point of $\EN=\E\big|_{\N}$ (over $\N$), 
		then $u$ is a critical point of $\E$ (over $\W$). 
	\end{proposition}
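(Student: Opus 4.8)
The plan is to treat this as a standard Nehari manifold argument: realize $\N = [\F = 0] \setminus \{0\}$ as a constraint manifold, apply the Lagrange multiplier rule at the constrained critical point $u$, and then show that the multiplier must vanish by testing the resulting Euler equation against $u$ itself. The only input needed beyond $C^1$-regularity is a transversality condition guaranteeing that $\N$ is a genuine manifold near $u$, and this is exactly where the hypothesis $\E(u) \neq 0$ enters.

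First I would record the action of $\F'(u)$ on the direction $u$. Since $\Ha$ is $p$-homogeneous and $\Gb$ is $q$-homogeneous, Euler's identity gives $\langle \Ha'(u),u\rangle = p\,\Ha(u)$ and $\langle \Gb'(u),u\rangle = q\,\Gb(u)$, so that from \eqref{def:F} one obtains
\begin{equation*}
	\langle \F'(u),u\rangle = p\,\Ha(u) + q\,\Gb(u).
\end{equation*}
Using the defining relation $u \in \N$, i.e.\ $\Ha(u) + \Gb(u) = 0$, this collapses to $\langle \F'(u),u\rangle = (p-q)\,\Ha(u)$. By the equivalence noted in the statement (and by \eqref{eq:EHG}), the assumption $\E(u) \neq 0$ is the same as $\Ha(u) \neq 0$, and since $p > q$ we conclude $\langle \F'(u),u\rangle = (p-q)\,\Ha(u) \neq 0$. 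In particular $\F'(u) \neq 0$, so $u$ is a regular point of $\F$ and $\N$ is a $C^1$-manifold in a neighborhood of $u$.

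Consequently, the Lagrange multiplier rule applies at the critical point $u$ of $\EN = \E\big|_\N$: there exists $\mu \in \mathbb{R}$ with $\E'(u) = \mu\,\F'(u)$ in $(\W)^*$. Testing this identity against $u$, the left-hand side is $\langle \E'(u),u\rangle = \Ha(u) + \Gb(u) = 0$ precisely because $u \in \N$, while the right-hand side equals $\mu\,(p-q)\,\Ha(u)$ by the computation above. Since $(p-q)\,\Ha(u) \neq 0$, this forces $\mu = 0$, and therefore $\E'(u) = 0$, i.e.\ $u$ is a critical point of $\E$ over $\W$.

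There is no genuine analytic obstacle here; the whole argument is a routine application of constrained critical point theory. The single delicate point—and the reason the hypothesis $\E(u)\neq 0$ cannot be dropped—is the transversality $\langle \F'(u),u\rangle \neq 0$, which both makes the Lagrange multiplier rule legitimate and, upon testing against $u$, isolates $\mu$ and forces it to be zero. If instead $\Ha(u) = \Gb(u) = 0$, the direction $u$ would be tangent to $\N$ and this mechanism would break down, so the role of the nondegeneracy assumption should be emphasized.
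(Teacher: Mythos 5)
Your proof is correct and follows exactly the standard Lagrange multiplier argument used in the cited source \cite[Lemma~2]{BobkovTanaka2015} (the paper itself only quotes the result): the key computation $\langle \F'(u),u\rangle = p\,\Ha(u)+q\,\Gb(u) = (p-q)\,\Ha(u) \neq 0$ both guarantees that $\N$ is a $C^1$-manifold near $u$ and, upon testing $\E'(u)=\mu\,\F'(u)$ against $u$, forces $\mu=0$. Nothing is missing, and your emphasis on the role of the nondegeneracy hypothesis $\E(u)\neq 0$ is exactly right.
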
 
	
	Let us consider the following set:
	\begin{equation}\label{def:B^+} 
		B_{\alpha,\beta}^+ = \{v \in \W:\, H_\alpha(v)<0< G_\beta(v)\}
		= [H_\alpha<0] \cap [G_\beta>0]. 
	\end{equation}
	\begin{proposition}[\cite{BobkovTanaka2017}]\label{prop:LeastEnergy}
		Let $\alpha>\lambda_1(p)$ and $\beta<\beta_*(\alpha)$. 
		Then there exists $v\in \N \cap B_{\alpha,\beta}^+$ such that 
		$$
		\E(v) = \inf \{\E (u):~ u\in \N\cap B_{\alpha,\beta}^+\,\}
		>0.
		$$
		Moreover, $v$ is a positive solution of \eqref{eq:D}.
	\end{proposition}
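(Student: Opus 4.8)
The plan is to obtain $v$ as a minimizer of $\E$ over the relatively open subset $\N\cap B_{\alpha,\beta}^+$ of the Nehari manifold by the direct method, and then to upgrade it to a positive solution. First I would check that this set is nonempty: the hypothesis $\alpha>\lambda_1(p)$ gives $H_\alpha(\varphi_p)=(\lambda_1(p)-\alpha)\|\varphi_p\|_p^p<0$, while $\beta<\beta_*(\alpha)<\beta_*=\|\nabla\varphi_p\|_q^q/\|\varphi_p\|_q^q$ (using the monotonicity of $\beta_*(\alpha)$ recalled in Section~\ref{sec:notations} together with \eqref{def:values}) gives $G_\beta(\varphi_p)>0$; hence $\varphi_p\in B_{\alpha,\beta}^+$ and, by Proposition~\ref{prop:minpoint}, $t(\varphi_p)\varphi_p\in\N\cap B_{\alpha,\beta}^+$. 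Setting $c=\inf\{\E(u):u\in\N\cap B_{\alpha,\beta}^+\}$, formula \eqref{eq:EHG} shows $\E(u)=\tfrac{p-q}{pq}G_\beta(u)>0$ on this set, so $c\ge 0$. Since each ray through a point of $B_{\alpha,\beta}^+$ meets $\N\cap B_{\alpha,\beta}^+$ exactly once, at its (positive) maximum, Proposition~\ref{prop:minpoint} also identifies $c=\inf\{\J(u):u\in B_{\alpha,\beta}^+\}$, which is convenient because $\J$ is $0$-homogeneous.

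The crux, and the step I expect to be the main obstacle, is compactness: the functional $u\mapsto\|\nabla u\|_p^p$ is only weakly lower semicontinuous and the constraint set is noncompact. Both the strict positivity $c>0$ and the boundedness of a minimizing sequence $\{u_n\}$ will follow from a single normalization argument exploiting the variational meaning \eqref{def:beta_*2} of $\beta_*(\alpha)$. Arguing by contradiction for boundedness: if $\|\nabla u_n\|_p\to\infty$, set $w_n=u_n/\|\nabla u_n\|_p$, so $\|\nabla w_n\|_p=1$; by the homogeneity of $H_\alpha,G_\beta$ and the boundedness of $H_\alpha(u_n),G_\beta(u_n)$ (via \eqref{eq:EHG}) one gets $H_\alpha(w_n)\to 0$ and $G_\beta(w_n)\to 0$. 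Passing to a weak limit $w_n\rightharpoonup w$ in $\W$ with $w_n\to w$ in $L^p$ and $L^q$ by the compact Sobolev embeddings, the identity $H_\alpha(w_n)=1-\alpha\|w_n\|_p^p\to 0$ forces $\alpha\|w\|_p^p=1$, hence $w\ne 0$, while weak lower semicontinuity yields $G_\beta(w)\le 0$. Then $\|\nabla w\|_q^q/\|w\|_q^q\le\beta<\beta_*(\alpha)$, so by \eqref{def:beta_*2} the function $w$ cannot satisfy $\|\nabla w\|_p^p/\|w\|_p^p\le\alpha$; that is, $\|\nabla w\|_p^p>\alpha\|w\|_p^p=1$, contradicting $\|\nabla w\|_p\le\liminf\|\nabla w_n\|_p=1$. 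The same device applied to a normalized sequence realizing $\inf_{B_{\alpha,\beta}^+}\J$ gives $c>0$.

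Once $\{u_n\}$ is bounded I would extract $u_n\rightharpoonup u$ with $u_n\to u$ in $L^p,L^q$ and, along a subsequence, $H_\alpha(u_n)\to h$, $G_\beta(u_n)\to g$ with $h+g=0$ and $g=\tfrac{pq}{p-q}c>0$. If $u=0$, then $H_\alpha(u_n)=\|\nabla u_n\|_p^p-o(1)\to h$ forces $h\ge 0$, hence $g=0$ and $c=0$, contradicting $c>0$; so $u\ne 0$. Lower semicontinuity gives $H_\alpha(u)\le h<0$, whence Lemma~\ref{lem:HG<0}\ref{lem:H<0} yields $G_\beta(u)>0$ and $u\in B_{\alpha,\beta}^+$. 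Finally, combining $|H_\alpha(u)|\ge -h=g$ with $0<G_\beta(u)\le g$ in the explicit expression \eqref{eq:J} gives $\J(u)\le\tfrac{p-q}{pq}\,g=c$, while $t(u)u\in\N\cap B_{\alpha,\beta}^+$ forces $\J(u)=\E(t(u)u)\ge c$; thus $v:=t(u)u$ attains the infimum, and $\E(v)=c>0$.

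It remains to upgrade $v$ to a positive solution. As $B_{\alpha,\beta}^+$ is open, $v$ is a local minimizer of $\E|_{\N}$ (note $\langle\F'(v),v\rangle=(p-q)H_\alpha(v)\ne 0$, so $\N$ is a manifold near $v$), hence a critical point of $\EN$; since $\E(v)=c\ne 0$, Proposition~\ref{prop:critical} shows $v$ is a critical point of $\E$ over $\W$, i.e.\ a weak solution of \eqref{eq:D}. Because $H_\alpha,G_\beta,\E$ are even, $|v|$ also lies in $\N\cap B_{\alpha,\beta}^+$ with the same energy, so I may take $v\ge 0$; regularity (Remark~\ref{rem:reg}) then gives $v\in C^1_0(\overline{\Omega})$, and the strong maximum principle for the $(p,q)$-Laplacian yields $v>0$ in $\Omega$.
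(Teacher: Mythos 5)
Your argument is correct, but it is not the paper's argument: the paper does not actually prove Proposition~\ref{prop:LeastEnergy} internally. Its ``proof'' is a two-case citation, splitting the parameter range at $\alpha_*$ and invoking \cite[Proposition~12]{BobkovTanaka2017} for $\lambda_1(p)<\alpha<\alpha_*$ and \cite[Theorem~2.3]{BobkovTanaka2017} for $\alpha\ge\alpha_*$ (where $\beta_*(\alpha)=\lambda_1(q)$). You instead give a unified, self-contained constrained-minimization proof valid for all $\alpha>\lambda_1(p)$, $\beta<\beta_*(\alpha)$, assembled entirely from tools already present in this paper: the characterization \eqref{def:beta_*2} of $\beta_*(\alpha)$ (equivalently Lemma~\ref{lem:HG<0}~\ref{lem:H<0}) to exclude degenerate weak limits, Proposition~\ref{prop:minpoint} and the $0$-homogeneous functional $\J$ to pass between $B_{\alpha,\beta}^+$ and $\N\cap B_{\alpha,\beta}^+$, and Proposition~\ref{prop:critical} to turn the constrained minimizer into a free critical point; your normalization trick for boundedness and for $c>0$ is in fact the very device the paper uses later in Lemma~\ref{lem:PS_positive} and Lemma~\ref{lem:PS_negative2}. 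What your route buys is independence of Section~\ref{sec:nehari} from the external reference and the elimination of the case split at $\alpha_*$; what the paper's route buys is brevity, outsourcing the hard analysis to prior work. I verified the delicate steps: the identity $c=\inf_{B_{\alpha,\beta}^+}\J$ holds because each ray in the cone $B_{\alpha,\beta}^+$ meets $\N$ exactly at $t(u)u$; the exponent bookkeeping in \eqref{eq:J} with $0<G_\beta(u)\le g$ and $|H_\alpha(u)|\ge g$ indeed yields $\J(u)\le \frac{p-q}{pq}\,g^{(p-q)/(p-q)}=c$; and $\langle \F'(v),v\rangle=(p-q)H_\alpha(v)<0$ makes the Lagrange-multiplier step legitimate. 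Two points should be expanded in a final write-up, though neither is a gap: first, the compressed ``same device gives $c>0$'' step needs the observation that for $\{v_n\}\subset B_{\alpha,\beta}^+$ with $\|\nabla v_n\|_p=1$ and $\J(v_n)\to 0$, boundedness of $|H_\alpha(v_n)|$ forces $G_\beta(v_n)\to 0$, while $H_\alpha(v_n)<0$ gives $\alpha\|v_n\|_p^p>1$, so the weak limit is nonzero and lies in $[H_\alpha\le 0]\cap[G_\beta\le 0]$, contradicting Lemma~\ref{lem:HG<0}~\ref{lem:H<0}; second, the strong maximum principle for $-\Delta_p-\Delta_q$ with possibly negative $\beta$ deserves a citation (e.g., Pucci--Serrin), and it does apply here since $v\ge 0$ satisfies $-\Delta_p v-\Delta_q v+\beta^- v^{q-1}\ge 0$ with $\beta^-:=\max\{-\beta,0\}$ and $f(s)=\beta^- s^{q-1}$ verifies the requisite integral condition near $s=0$ -- this is exactly how positivity is obtained in \cite{BobkovTanaka2015,BobkovTanaka2017}.
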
 
	\begin{proof}
		The case $\lambda_1(p) < \alpha <\alpha_*$ and $\beta<\beta_*(\alpha)$ is covered by \cite[Proposition~12]{BobkovTanaka2017}.
		The remaining case $\alpha \geq \alpha_*$ and $\beta<\beta_*(\alpha) \, (=\lambda_1(q))$ is given by \cite[Theorem~2.3]{BobkovTanaka2017}.
	\end{proof}

	\begin{proposition}\label{prop:MFD} 
		Let $\alpha>\lambda_1(p)$ and $\beta<\beta_*(\alpha)$. 
		Then 
		$\N\cap B_{\alpha,\beta}^+$ is a closed symmetric 
		$C^1$-submanifold of $\W$ of codimension~$1$, and 
		the tangent space $T_u \N$ at any $u\in \N\cap B_{\alpha,\beta}^+$ can be characterized as
		\begin{equation}\label{def:tangent_space} 
			T_u \N
			=
			\{
			v\in \W:~ \langle \F^\prime (u),v\rangle=0
			\}.
		\end{equation}
	\end{proposition}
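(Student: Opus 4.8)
The plan is to realize $\N\cap B_{\alpha,\beta}^+$ as a regular level set of the $C^1$ functional $\F$ restricted to the open set $B_{\alpha,\beta}^+$, and then to invoke the standard submersion (implicit function) theorem in Banach spaces. Since $\F\in C^1(\W,\mathbb{R})$ and $\N=[\F=0]\setminus\{0\}$, while $0\notin B_{\alpha,\beta}^+$ (because $\Ha(0)=\Gb(0)=0$), I first observe that $\N\cap B_{\alpha,\beta}^+=[\F=0]\cap B_{\alpha,\beta}^+$, i.e.\ the preimage of $\{0\}$ under $\F$ intersected with an open set. To deduce that this is a $C^1$-submanifold of codimension $1$ whose tangent space at $u$ equals $\ker\F'(u)$ — precisely the right-hand side of \eqref{def:tangent_space} — it suffices to verify that $\F'(u)\neq 0$ for every $u\in\N\cap B_{\alpha,\beta}^+$.

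The key computation exploits the homogeneities of $\Ha$ and $\Gb$. As $\Ha$ is $p$-homogeneous and $\Gb$ is $q$-homogeneous, Euler's identity yields $\langle\Ha'(u),u\rangle=p\,\Ha(u)$ and $\langle\Gb'(u),u\rangle=q\,\Gb(u)$, hence $\langle\F'(u),u\rangle=p\,\Ha(u)+q\,\Gb(u)$. On $\N$ one has $\Gb(u)=-\Ha(u)$, so this collapses to $\langle\F'(u),u\rangle=(p-q)\,\Ha(u)$. Since $u\in B_{\alpha,\beta}^+$ forces $\Ha(u)<0$ and $p>q$, I get $\langle\F'(u),u\rangle<0$; in particular $\F'(u)\neq 0$. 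Moreover, this same inequality shows $u\notin\ker\F'(u)$, so $\W=\ker\F'(u)\oplus\mathbb{R}u$ and the closed hyperplane $\ker\F'(u)$ has codimension $1$. Thus $0$ is a regular value of $\F$ on $B_{\alpha,\beta}^+$, and both the submanifold structure and \eqref{def:tangent_space} follow at once.

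Symmetry is immediate: $\Ha$ and $\Gb$ are even, so $\F$ is even and $B_{\alpha,\beta}^+$ is invariant under $u\mapsto -u$, which makes $\N\cap B_{\alpha,\beta}^+$ a symmetric set.

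The genuinely delicate point — and the one I expect to be the main obstacle — is \emph{closedness}, since $B_{\alpha,\beta}^+$ is only open and a limit of points of the set could a priori escape to $0$ or to the boundary $[\Ha=0]\cup[\Gb=0]$. Here I would rely on Proposition~\ref{prop:LeastEnergy}, which guarantees $m:=\inf\{\E(u):u\in\N\cap B_{\alpha,\beta}^+\}>0$. Combining this with the identity $\E(u)=-\frac{p-q}{pq}\Ha(u)$ valid on $\N$ (see \eqref{eq:EHG}), I obtain the uniform bound $\Ha(u)\le -c$ on the whole set, where $c:=\frac{pq}{p-q}\,m>0$, and correspondingly $\Gb(u)=-\Ha(u)\ge c>0$. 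Consequently, if $u_n\to u$ with $u_n\in\N\cap B_{\alpha,\beta}^+$, then by continuity $\F(u)=0$, $\Ha(u)\le -c<0$, and $\Gb(u)\ge c>0$, so $u\in[\Ha<0]\cap[\Gb>0]=B_{\alpha,\beta}^+$ and $u\in\N$; hence the limit remains in the set and closedness follows. (Alternatively, a limit with $\Ha(u)\le 0$ could be analyzed via Lemma~\ref{lem:HG<0}\ref{lem:H<0}, which already excludes the boundary piece $[\Gb=0]$, leaving only the escape to $0$ to be ruled out by the energy bound.)
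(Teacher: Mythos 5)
Your proposal is correct and takes essentially the same route as the paper's proof: both deduce closedness from the uniform bound furnished by Proposition~\ref{prop:LeastEnergy} together with \eqref{eq:EHG} (the paper writes $\N\cap B_{\alpha,\beta}^+=[\F=0]\cap[G_\beta\ge\delta]$ with $\delta=\frac{pq}{p-q}\,\inf\E>0$, which is precisely your bounds $\Gb\ge c$ and $\Ha\le -c$), and both obtain the submanifold structure and \eqref{def:tangent_space} from the nondegeneracy $\langle\F'(u),u\rangle=(p-q)\Ha(u)=-(p-q)\Gb(u)<0$ on the set. Your observation that this inequality yields the splitting $\W=\ker\F'(u)\oplus\mathbb{R}u$ is exactly the content of the paper's explicit projection $P(\xi)=\xi-u\,\langle\F'(u),\xi\rangle/\langle\F'(u),u\rangle$ used to apply the inverse function theorem.
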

	\begin{proof} 
		It is evident that $\N\cap B_{\alpha,\beta}^+$ is symmetric since 
		the involved functionals $\F$, $\Ha$,  $\Gb$ are even. 
		Setting
		\begin{equation}\label{def:d}
			\delta
			=
			\inf \{G_\beta (u):\,u\in \N\cap B_{\alpha,\beta}^+\,\} 
			=
			\frac{pq}{p-q}
			\inf \{\E (u):\,u\in \N\cap B_{\alpha,\beta}^+\,\},
		\end{equation}
		we get from Proposition~\ref{prop:LeastEnergy} that $\delta>0$, and hence
		$$
		\N\cap B_{\alpha,\beta}^+ 
		=[\F=0] \cap [G_\beta\ge \delta].
		$$
		Consequently, the set $\N\cap B_{\alpha,\beta}^+$ is closed. 
		Finally, any $u\in [\F=0] \cap [G_\beta \geq \delta]$ 
		satisfies 
		$$
		\langle \F^\prime(u),u\rangle 
		=pH_\alpha(u)+qG_\beta (u)
		=-(p-q)G_\beta (u)
		\leq -(p-q) \delta
		<0, 
		$$
		whence $\F^\prime(u)\not=0$ in $(\W)^*$.
		Therefore, the remaining assertions of the proposition follow from 
		the inverse function theorem, see, e.g., \cite[Theorem~43.C]{zeidler}.
		(In order to apply \cite[Theorem~43.C]{zeidler}, let us note that a continuous projection operator $P$ from $\W$ to the null space of $\F'(u)$ for a given $u \in \N\cap B_{\alpha,\beta}^+$ required in \cite[Definition~43.15]{zeidler} can be defined as $P(\xi) = \xi - u \, \langle\F'(u),\xi\rangle/\langle\F'(u),u\rangle$ for any $\xi \in \W$.)
	\end{proof} 
	
	In the standard way, the norm of the derivative of the restricted functional $\EN$ at $u \in \N\cap B_{\alpha,\beta}^+$ is defined by 
	\begin{equation}\label{def:der_EN} 
		\|\EN'(u)\|_{\N^*}
		=
		\sup\{\langle \E^\prime (u), v\rangle:~ v\in T_u\N, 
		\ \|\nabla v\|_p=1\,\}.
	\end{equation}
	In view of \eqref{def:tangent_space}, the duality lemma (see, e.g., \cite[Proposition~3.54]{perera}) implies that 
	$\|\EN'(u)\|_{\N^*}$ can be expressed as 
	\begin{equation}\label{def:der_EN_2} 
		\|\EN'(u)\|_{\N^*}
		=
		\min\{\|\E^\prime (u)-\lambda \F^\prime(u)\|_* 
		:~ \lambda\in\mathbb{R}\}\quad 
		{\rm for}\ u\in \N\cap B_{\alpha,\beta}^+, 
	\end{equation}
	where $\|\cdot\|_*$ is the standard operator norm, and we observe that the minimum is attained.
	
	Let us provide a compactness result for $\EN$ which will be needed for the proof of Theorem~\ref{thm3}.
	In Section~\ref{sec:propertiesE} below, we also discuss several related compactness results for the original energy functional $\E$.
	\begin{lemma}\label{lem:PS_positive} 
		Let $\alpha>\lambda_1(p)$ and 
		$\beta<\beta_*(\alpha)$. 
		Then $\EN$ satisfies the Palais--Smale condition 
		at any level $c>0$. 
	\end{lemma}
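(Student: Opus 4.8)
The plan is to take an arbitrary sequence $\{u_n\}\subset\N$ with $\EN(u_n)\to c>0$ and $\|\EN'(u_n)\|_{\N^*}\to0$, and to extract a strongly convergent subsequence in $\W$. First I would record, using \eqref{eq:EHG}, that along such a sequence $\Gb(u_n)\to \frac{pq}{p-q}c=:d>0$ and $\Ha(u_n)=-\Gb(u_n)\to-d<0$. In particular $u_n\in B_{\alpha,\beta}^+$ for all large $n$, so Proposition~\ref{prop:MFD} applies and the dual characterization \eqref{def:der_EN_2} is available: there exist $\lambda_n\in\mathbb{R}$ attaining the minimum, so that $\|\E'(u_n)-\lambda_n\F'(u_n)\|_*\to0$.

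The first real step is \emph{boundedness} of $\{u_n\}$ in $\W$, and this is the step where the hypothesis $\beta<\beta_*(\alpha)$ enters decisively. Suppose, for contradiction, that $\|\nabla u_n\|_p\to+\infty$ along a subsequence, and set $w_n=u_n/\|\nabla u_n\|_p$, so $\|\nabla w_n\|_p=1$. Passing to a further subsequence, $w_n\rightharpoonup w$ in $\W$ and $w_n\to w$ in $L^p$ and $L^q$ by the compact Sobolev embeddings. Dividing $\Ha(u_n)\to-d$ by $\|\nabla u_n\|_p^p$ gives $1-\alpha\|w_n\|_p^p\to0$, whence $\|w\|_p^p=1/\alpha>0$ (so $w\neq0$) and, by weak lower semicontinuity, $\Ha(w)\le1-\alpha\|w\|_p^p=0$. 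Dividing $\Gb(u_n)\to d$ by $\|\nabla u_n\|_p^q$ gives $\Gb(w_n)\to0$, and the same semicontinuity argument yields $\Gb(w)\le0$. But then $\Ha(w)\le0$ with $w\neq0$ forces $\|\nabla w\|_q^q/\|w\|_q^q\ge\beta_*(\alpha)$ by the definition \eqref{def:beta_*}, whereas $\Gb(w)\le0$ forces $\|\nabla w\|_q^q/\|w\|_q^q\le\beta$; together these contradict $\beta<\beta_*(\alpha)$. Hence $\{u_n\}$ is bounded, and I may assume $u_n\rightharpoonup u_0$ in $\W$, with $u_n\to u_0$ in $L^p$ and $L^q$.

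Next I would convert the constrained condition into an unconstrained one by showing $\lambda_n\to0$. Testing $\E'(u_n)-\lambda_n\F'(u_n)$ against $u_n$ and using $\langle\E'(u_n),u_n\rangle=\F(u_n)=0$ on $\N$, together with $\langle\F'(u_n),u_n\rangle=(p-q)\Ha(u_n)\to-(p-q)d\neq0$, gives $\lambda_n\to0$. Since $\{u_n\}$ is bounded, $\{\F'(u_n)\}$ is bounded in $(\W)^*$, so $\lambda_n\F'(u_n)\to0$ and therefore $\E'(u_n)\to0$ in $(\W)^*$; that is, $\{u_n\}$ is a genuine Palais--Smale sequence for the unconstrained functional $\E$.

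Finally, I would upgrade weak to strong convergence via the $(S_+)$ property. From $\E'(u_n)\to0$ and boundedness, $\langle\E'(u_n),u_n-u_0\rangle\to0$; the lower-order terms $\alpha\intO|u_n|^{p-2}u_n(u_n-u_0)$ and $\beta\intO|u_n|^{q-2}u_n(u_n-u_0)$ tend to $0$ by the compact embeddings, leaving $\intO(|\nabla u_n|^{p-2}\nabla u_n+|\nabla u_n|^{q-2}\nabla u_n)\cdot\nabla(u_n-u_0)\,dx\to0$. Bounding the $q$-term below by monotonicity of $-\Delta_q$ (its contribution has $\liminf\ge0$, since $u_n-u_0\rightharpoonup0$ in $W_0^{1,q}$), I conclude $\limsup\intO|\nabla u_n|^{p-2}\nabla u_n\cdot\nabla(u_n-u_0)\,dx\le0$, and the $(S_+)$ property of $-\Delta_p$ on $\W$ then yields $u_n\to u_0$ strongly in $\W$. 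I expect the boundedness step to be the main obstacle: the strict inequality $\beta<\beta_*(\alpha)$ is exactly what excludes the rescaled blow-up limit $w$, while the reduction $\lambda_n\to0$ and the $(S_+)$ conclusion are standard.
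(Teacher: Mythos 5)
Your proof is correct and follows essentially the same route as the paper's: the same blow-up/normalization argument for boundedness (you invoke the definition \eqref{def:beta_*} of $\beta_*(\alpha)$ directly where the paper cites Lemma~\ref{lem:HG<0}~\ref{lem:H<0}, which is the same fact), the same reduction $\lambda_n\to0$ via testing $\E'(u_n)-\lambda_n\F'(u_n)$ against $u_n$, and the same conclusion by the $(S_+)$-property of $-\Delta_p$ (which you spell out by monotonicity of $-\Delta_q$ where the paper cites a reference). The only cosmetic omission is the final remark that the limit lies in $\N\cap B_{\alpha,\beta}^+$ (closedness, Proposition~\ref{prop:MFD}), which in your argument also follows from strong convergence and continuity of $\F$, $\Ha$, $\Gb$.
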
 
	\begin{proof} 
		Let $\{u_n\}\subset \N$ be a Palais--Smale sequence for $\EN$ 
		at  $c>0$, that is, 
		$\EN(u_n)\to c$ and $\|\EN^\prime(u_n)\|_{\N^*} \to 0$ as $n \to +\infty$. 
		In view of \eqref{eq:EHG}, we have
		\begin{equation}\label{eq:PS+:1}  
			0<c+o(1)=\EN(u_n)=\E(u_n)
			=-\dfrac{p-q}{pq}H_\alpha (u_n)
			= \dfrac{p-q}{pq}G_\beta (u_n) 
		\end{equation}
		for any (sufficiently large) $n$.
		Thus, 
		$u_n\in[G_\beta>0]\cap [H_\alpha<0]$ and, consequently,  
		$u_n \in \N \cap B_{\alpha,\beta}^+$.
		Let us show the boundedness of $\{u_n\}$ in $\W$.
		Suppose, by contradiction, that $\|\nabla u_n\|_p\to +\infty$ as $n\to+\infty$, up to a subsequence.
		Considering the normalized functions $v_n = u_n/\|\nabla u_n\|_p$, we deduce that $\{v_n\}$ converges to some $v_0$ weakly in $\W$ and strongly in $L^p(\Omega)$, up to a subsequence. 
		Since $u_n\in\N$ and $\alpha>0$, we have
		$$
		0+o(1)=-\dfrac{G_\beta(v_n)}{\|\nabla u_n\|_p^{p-q}} 
		=H_\alpha (v_n)=1-\alpha\|v_n\|_p^p 
		\quad \text{as} ~n\to +\infty,
		$$
		which implies that $v_0 \neq 0$ in $\Omega$. Moreover, $v_0\in[H_\alpha\le 0]$ because $H_\alpha$ is weakly lower semicontinuous. 
		Therefore, we obtain from Lemma~\ref{lem:HG<0}~\ref{lem:H<0} that $v_0\in [G_\beta>0]$. 
		On the other hand, \eqref{eq:PS+:1} leads to 
		$$
		G_\beta (v_0)\le \liminf_{n\to+\infty} G_\beta (v_n) 
		=\liminf_{n\to+\infty} \dfrac{pq\,\E(u_n)}{\,(p-q)\|\nabla u_n\|_p^q\,} 
		= 0, 
		$$
		which is impossible.
		Thus, $\{u_n\}$ is bounded in $\W$.	
		
		By the expression \eqref{def:der_EN_2}, for each $n$ there exists  $\lambda_n\in\mathbb{R}$ such that  
		\begin{align}
			\|\EN^\prime(u_n)\|_{\N^*}
			&=\|\E^\prime(u_n)-\lambda_n \F^\prime (u_n)\|_*
			\label{eq:PS+:2} \\
			&=\| (1-p\lambda_n)H_\alpha^\prime(u_n)/p 
			+(1-q\lambda_n)G_\beta^\prime(u_n)/q\|_*. 
			\nonumber 
		\end{align} 
		Recalling that $u_n \in \N$ and $\|\EN^\prime(u_n)\|_{\N^*} \to 0$, we get
		\begin{align} 
			o(1)\,\|\nabla u_n\|_p 
			& =\langle \E^\prime(u_n)-\lambda_n \F^\prime (u_n), u_n \rangle 
			\nonumber \\ 
			& = (1-p\lambda_n)H_\alpha(u_n)+(1-q\lambda_n)G_\beta (u_n) 
			\nonumber \\ 
			& =-\lambda_n(p-q) H_\alpha (u_n) 
			=\lambda_n(p-q) G_\beta(u_n) 
			\label{eq:PS+:3} 
			\quad \text{as}~ n\to+\infty.~~~
		\end{align}
		Since $\{u_n\}$ is bounded in $\W$, we conclude from \eqref{eq:PS+:1} and \eqref{eq:PS+:3} that 
		$\lambda_n \to 0$. 
		Consequently, in view of \eqref{eq:PS+:2} and the fact that the boundedness of $\{u_n\}$ implies the boundedness of $\|\F'(u_n)\|_*$, we deduce that $\{u_n\}$ is a bounded Palais--Smale sequence of $\E$.
		Hence, the $(S_+)$-property of $-\Delta_p$ (see, e.g., \cite[Theorem 10]{dinica}) ensures that 
		$\{u_n\}$ has a subsequence which converges strongly in $\W$. 
		Since $\N \cap B_{\alpha,\beta}^+$ is closed by Proposition~\ref{prop:MFD}, 
		the limit of the convergent subsequence of $\{u_n\}$ 
		belongs to $\N \cap B_{\alpha,\beta}^+$, which completes the proof. 
	\end{proof}

	\subsection{Properties of the energy functional}\label{sec:propertiesE}
	
	Let us collect a few auxiliary results about properties of the energy functional $\E$.
	\begin{lemma}\label{lem:bdd_below} 
		Let either of the following assumptions be satisfied:
		\begin{enumerate}[label={\rm(\roman*)}]
			\item\label{lem:bdd_below:1} $\alpha < \lambda_1(p)$ and $\lambda_1(q) < \beta$;
			\item\label{lem:bdd_below:2} $\alpha = \lambda_1(p)$ and $\lambda_1(q) < \beta < \beta_*$;
			\item\label{lem:bdd_below:2x} $\alpha = \lambda_1(p)$, $\beta = \beta_*$, and $p \geq 2q$;
			\item\label{lem:bdd_below:3} $\lambda_1(p) < \alpha < \alpha_*$ and $\lambda_1(q) < \beta \leq \beta_*(\alpha)$.
		\end{enumerate}
		Then $-\infty < \inf_{[G_\beta<0]} \E < 0$.
	\end{lemma}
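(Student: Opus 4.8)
The plan is to prove the two inequalities separately: the bound $\inf_{[G_\beta<0]}\E<0$ is elementary, while boundedness from below is where all the difficulty lies. For the upper bound it suffices to exhibit one function in $[G_\beta<0]$ with negative energy. Recalling $\|\varphi_q\|_q=1$ and $\|\nabla\varphi_q\|_q^q=\lambda_1(q)$ (and that $\varphi_q\in\W$ by the regularity in Remark~\ref{rem:reg}), we have $\Gb(\varphi_q)=\lambda_1(q)-\beta<0$, since $\lambda_1(q)<\beta$ in all four cases; in particular $[G_\beta<0]\neq\emptyset$. As $\Gb$ is $q$-homogeneous, $t\varphi_q\in[G_\beta<0]$ for every $t>0$, and from $\E(t\varphi_q)=\tfrac{t^p}{p}\Ha(\varphi_q)+\tfrac{t^q}{q}\Gb(\varphi_q)$ with $q<p$ the negative $t^q$-term dominates for small $t>0$, so $\E(t\varphi_q)<0$ and the infimum is negative.

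For the lower bound I would first reduce to the fibered functional. Under the standing hypotheses we have the inclusion $[G_\beta<0]\subset[H_\alpha>0]$: in case \ref{lem:bdd_below:1} it follows from the coercivity of $\Ha$ (because $\alpha<\lambda_1(p)$ forces $\Ha(u)\ge c\|\nabla u\|_p^p>0$ for $u\neq0$), and in the remaining cases from Lemma~\ref{lem:HG<0}\ref{lem:H<02} (valid as $\beta\le\beta_*(\alpha)$). Hence every $u\in[G_\beta<0]$ satisfies $\Ha(u)>0>\Gb(u)$, so by Proposition~\ref{prop:minpoint} the ray $t\mapsto\E(tu)$ attains its minimum $\J(u)<0$ at $t(u)u\in[G_\beta<0]$. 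Since $\J$ is $0$-homogeneous, this gives
\[
\inf_{[G_\beta<0]}\E=\inf\Bigl\{\J(v):~\|\nabla v\|_p=1,~\Gb(v)<0\Bigr\},\qquad \J(v)=-\frac{p-q}{pq}\,\frac{|\Gb(v)|^{\frac{p}{p-q}}}{\Ha(v)^{\frac{q}{p-q}}}.
\]
On the constraint $\|\nabla v\|_p=1$ the quantity $|\Gb(v)|$ is bounded via $\W\hookrightarrow W_0^{1,q}$, so $\J$ can fail to be bounded below only along a sequence $\{v_n\}$ with $\|\nabla v_n\|_p=1$, $\Gb(v_n)<0$, and $\Ha(v_n)\to0^+$. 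Passing to a weak limit $v_0$ (strong in $L^p$ and $L^q$), weak lower semicontinuity yields $\Ha(v_0)\le0$; moreover $v_0\neq0$, since $v_0=0$ would give $\Ha(v_n)=1-\alpha\|v_n\|_p^p\to1$.

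The non-borderline cases now close. In case \ref{lem:bdd_below:1} the coercivity already bounds $\Ha(v_n)$ away from $0$, so no such sequence exists. When $\beta<\beta_*(\alpha)$ (cases \ref{lem:bdd_below:2} and \ref{lem:bdd_below:3} with strict inequality), Lemma~\ref{lem:HG<0}\ref{lem:H<0} applied to $v_0\neq0$ with $\Ha(v_0)\le0$ forces $\Gb(v_0)>0$, contradicting $\Gb(v_0)\le\liminf\Gb(v_n)\le0$; hence $\J$ is bounded below. The genuinely hard case is the borderline $\beta=\beta_*(\alpha)$ (case \ref{lem:bdd_below:2x}, and case \ref{lem:bdd_below:3} with equality), where Lemma~\ref{lem:HG<0}\ref{lem:H<0} is unavailable. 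There, Lemma~\ref{lem:HG<0}\ref{lem:H<02}--\ref{lem:H<03} combined with the lower semicontinuity force $\Gb(v_0)=\Ha(v_0)=0$ and $\|\nabla v_0\|_p=1$; uniform convexity then upgrades weak to strong convergence, so $v_n\to v_0$ in $\W$ toward a nonnegative minimizer $v_0$ of $\beta_*(\alpha)$, and the task becomes a \emph{local} comparison: to keep the ratio in $\J$ controlled one needs an estimate of the form $|\Gb(v)|\le C\,\Ha(v)^{q/p}$ for $v$ near $v_0$.

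The main obstacle is precisely this degenerate local estimate. When $\lambda_1(p)<\alpha<\alpha_*$ (case \ref{lem:bdd_below:3} with equality), the nonnegative minimizer $v_0$ cannot be a higher eigenfunction of $-\Delta_p$, so $\Ha'(v_0)\neq0$ and $v_0$ is a regular point of the constraint $[\Ha=0]$; the Lagrange condition makes $\Gb'(v_0)$ a nonnegative multiple of $\Ha'(v_0)$, so to leading order $|\Gb(v)|\sim\Ha(v)$ and $|\Gb(v)|^{p}/\Ha(v)^{q}\sim\Ha(v)^{p-q}\to0$, giving the bound with room to spare. When $\alpha=\lambda_1(p)$ (case \ref{lem:bdd_below:2x}) the minimizer is $v_0=\pm\varphi_p/\|\nabla\varphi_p\|_p$, an eigenfunction with $\Ha'(v_0)=0$, and $\Ha=H_{\lambda_1(p)}$ vanishes only quadratically transversally to $\mathbb{R}\varphi_p$; here I would control $\Ha$ from below by the improved Poincaré inequality of \cite{takac} (valid on $C^{1,\kappa}$-domains by \cite{BK}, see Remark~\ref{rem:poinc}) and expand $\Gb$ to second order at $\varphi_p$, at which point the threshold $p\ge2q$ is exactly what guarantees $|\Gb(v)|\le C\,\Ha(v)^{q/p}$. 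This is where the dichotomy $p$ versus $2q$ enters, and the required quantitative estimates are those established in \cite{BobkovTanaka2017}.
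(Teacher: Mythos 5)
Your upper-bound argument coincides with the paper's ($t\varphi_q$ for small $t>0$), but your lower-bound strategy is genuinely different in form: where the paper simply cites \cite[Propositions~1~(ii) and 3~(ii),(iii)]{BobkovTanaka2017} for global boundedness of $\E$ from below in cases \ref{lem:bdd_below:1}, \ref{lem:bdd_below:2}, \ref{lem:bdd_below:2x}, and in case \ref{lem:bdd_below:3} uses Lemma~\ref{lem:HG<0}~\ref{lem:H<02} and Proposition~\ref{prop:minpoint} to reduce to $\inf_{[G_\beta<0]}\E=\inf_{\N}\E>-\infty$ via \cite[Theorems~2.5~(i) and~2.6]{BobkovTanaka2017}, you attempt a self-contained compactness proof through the fibered functional $\J$. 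Your reduction $\inf_{[G_\beta<0]}\E=\inf\{\J(v):\ \|\nabla v\|_p=1,\ \Gb(v)<0\}$ is correct (it is essentially the same reduction the paper performs in case \ref{lem:bdd_below:3}), and your treatment of case \ref{lem:bdd_below:1} and of the cases with $\beta<\beta_*(\alpha)$ is complete: boundedness of $|\Gb|$ on the unit sphere forces $\Ha(v_n)\to0^+$ along any sequence with $\J(v_n)\to-\infty$, the weak limit $v_0$ is nonzero with $\Ha(v_0)\le0$ and $\Gb(v_0)\le0$, and Lemma~\ref{lem:HG<0}~\ref{lem:H<0} gives the contradiction. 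For those cases your argument is a nice alternative that avoids importing the minimization theory of \cite{BobkovTanaka2017}.

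The borderline cases $\beta=\beta_*(\alpha)$ are where you have a genuine gap. In case \ref{lem:bdd_below:3} with equality, the step ``the Lagrange condition makes $\Gb'(v_0)$ a nonnegative multiple of $\Ha'(v_0)$, so to leading order $|\Gb(v)|\sim\Ha(v)$, giving the bound with room to spare'' is not a proof: the $C^1$ expansions only give $\Gb(v_n)=\mu\,\Ha(v_n)+o(\|\nabla(v_n-v_0)\|_p)$, and along sequences asymptotically tangent to $[\Ha=0]$ at $v_0$ one also has $\Ha(v_n)=o(\|\nabla(v_n-v_0)\|_p)$, so the remainder can dominate and no inequality of the form $|\Gb(v_n)|^{p}\le C\,\Ha(v_n)^{q}$ follows from first-order data. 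A telltale sign that first-order information cannot suffice: the conclusion in case \ref{lem:bdd_below:3} holds for \emph{all} $1<q<p$, whereas in the structurally parallel borderline case \ref{lem:bdd_below:2x} (where $\Ha'(v_0)=0$ and second-order behavior governed by the improved Poincar\'e inequality of \cite{takac}, extended in \cite{BK}, takes over) the analogous quotient is controlled only under $p\ge2q$; the actual mechanism in case \ref{lem:bdd_below:3} rests on $\alpha<\alpha_*$ and the fine structure of minimizers of $\beta_*(\alpha)$, not on a local expansion. For case \ref{lem:bdd_below:2x} you correctly identify the second-order mechanism and the $p\ge2q$ threshold, but you only sketch it, deferring ``the required quantitative estimates'' to \cite{BobkovTanaka2017} without identifying them. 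Since the paper's entire proof of these cases consists of precise citations (\cite[Proposition~3~(iii)]{BobkovTanaka2017} for \ref{lem:bdd_below:2x}; \cite[Theorems~2.5~(i) and~2.6]{BobkovTanaka2017} for \ref{lem:bdd_below:3}), your proposal should either invoke those results explicitly or supply the missing degenerate estimates; as written, the two borderline cases are not established.
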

	\begin{proof} 
		Since $\lambda_1(q)<\beta$ in every case, we have $\varphi_q\in [G_\beta<0]$, and so 
		$\inf_{[G_\beta<0]} \E\le \E(t\varphi_q)<0$ for any sufficiently small $t>0$.
		Therefore, it remains to discuss only the boundedness of $\inf_{[G_\beta<0]} \E$ from below.
		In the cases \ref{lem:bdd_below:1} and \ref{lem:bdd_below:2}, we have $\inf_{\W} \E > -\infty$, see \cite[Propositions~1~(ii) and 3~(ii)]{BobkovTanaka2017}, respectively.
		In the case \ref{lem:bdd_below:2x}, we also have 
		$\inf_{\W} \E > -\infty$.
		This result is given by \cite[Propositions~3~(iii)]{BobkovTanaka2017}, and we refer to Remark~\ref{rem:poinc} for the discussion about sufficiency of the $C^{1,\kappa}$-regularity of $\Omega$.
		
		Finally, consider the case \ref{lem:bdd_below:3}.
		By Lemma~\ref{lem:HG<0}~\ref{lem:H<02}, any $u \in [\Gb < 0]$ satisfies $u \in [\Ha > 0]$.
		Consequently, there exists a unique minimum point $t(u)>0$ of $\E(tu)$ with respect to $t>0$, and $t(u)u \in \N$, see Proposition~\ref{prop:minpoint}.
		Therefore, using \cite[Theorems~2.5~(i) and~2.6]{BobkovTanaka2017}, we have
		$$
		\inf_{[G_\beta<0]} \E
		=
		\inf_{\N} \E > 
		-\infty,
		$$
		which completes the proof.
	\end{proof}

	\begin{lemma}\label{lem:surface} 
		Let $k \in \mathbb{N}$ and $\alpha \in \mathbb{R}$.
		If ${\lambda}_k(q)<\beta$, then 
		there exists 
		$A\in {\Sigma}_k(p)$ such that $A \subset [G_\beta<0] \cap [\E<0]$. 
	\end{lemma}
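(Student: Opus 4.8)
The plan is to reduce the statement to the alternative description of $\lambda_k(q)$ through the narrower constraint family $\Sigma_k(p)$ (recall $p>q$) established in Appendix~\ref{sec:appA1}, and then to adjust the sign of $\E$ by a scaling argument that exploits the different homogeneities of $\Ha$ and $\Gb$.

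First I would invoke the characterization of $\lambda_k(q)$ via the smaller constraint set from Appendix~\ref{sec:appA1} (see Remark~\ref{rem:gap}), namely
\[
	\lambda_k(q)
	=
	\inf\left\{
	\max_{u\in A}\frac{\|\nabla u\|_q^q}{\|u\|_q^q}:~ A\in\Sigma_k(p)
	\right\}.
\]
Since $\lambda_k(q)<\beta$ by assumption, this infimum is strictly below $\beta$, so there exists $A_0\in\Sigma_k(p)$ with $\max_{u\in A_0}\|\nabla u\|_q^q/\|u\|_q^q<\beta$. (The maximum is attained because $A_0$ is compact in $\W$, $A_0\subset\W\setminus\{0\}$, and the quotient $u\mapsto\|\nabla u\|_q^q/\|u\|_q^q$ is continuous on $\W\setminus\{0\}$ thanks to the continuous embedding $\W\hookrightarrow\Wq$.) By the definition of $\Gb$, this inequality means $\Gb(u)<0$ for every $u\in A_0$, i.e.\ $A_0\subset[\Gb<0]$. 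This step is the crucial one and the main obstacle: the standard minimax definition \eqref{eq:lambdak} only delivers a set in $\Sigma_k(q)$, which lives in $\Wq$ and need not be a compact subset of --- nor even contained in --- $\W$; the narrower-constraint characterization is exactly what guarantees that the genus level $k$ can be realized by a set admissible for $\Sigma_k(p)$.

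Next I would push $A_0$ into $[\E<0]$ by scaling. Since $A_0$ is compact in $\W$ and both $\Gb$ and $\Ha$ are continuous there, there exist $\delta>0$ and $M>0$ such that $\Gb(u)\le-\delta$ and $|\Ha(u)|\le M$ for all $u\in A_0$. Using the homogeneities $\Ha(tu)=t^p\Ha(u)$ and $\Gb(tu)=t^q\Gb(u)$, for $t>0$ and $u\in A_0$ one has
\[
	\E(tu)=\frac{t^p}{p}\,\Ha(u)+\frac{t^q}{q}\,\Gb(u)
	\le t^q\left(\frac{t^{p-q}}{p}\,M-\frac{\delta}{q}\right).
\]
Because $p>q$, the bracketed term tends to $-\delta/q<0$ as $t\to 0^+$, so there is $t_0>0$ with $\E(t_0u)<0$ for all $u\in A_0$; that is, $t_0A_0\subset[\E<0]$. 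At the same time $\Gb(t_0u)=t_0^q\Gb(u)<0$, hence $t_0A_0\subset[\Gb<0]$ as well.

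Finally I would verify that $A:=t_0A_0$ still belongs to $\Sigma_k(p)$. The map $u\mapsto t_0u$ is an odd homeomorphism of $\W$, so $t_0A_0$ is symmetric, compact in $\W$, contained in $\W\setminus\{0\}$, and has the same genus as $A_0$, whence $\gamma(t_0A_0)\ge k$. Thus $A=t_0A_0\in\Sigma_k(p)$ and $A\subset[\Gb<0]\cap[\E<0]$, as required. I expect no difficulty beyond the invocation of the appendix result, since the remaining arguments (continuity and boundedness on a compact set, the homogeneity estimate, and the invariance of the genus under the odd homeomorphism $u\mapsto t_0u$) are all routine.
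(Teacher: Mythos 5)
Your proof is correct and follows essentially the same route as the paper's: both invoke the characterization $\lambda_k(q)=\lambda_k(q;p)$ from Lemma~\ref{lem:la=la} to obtain a set in $\Sigma_k(p)$ lying in $[G_\beta<0]$, then scale by a small $t>0$, using the $p$- versus $q$-homogeneity of $\Ha$ and $\Gb$ to force $\E<0$ while preserving the genus under the odd homeomorphism $u\mapsto t u$. Your explicit quantitative bound $\E(tu)\le t^q\bigl(t^{p-q}M/p-\delta/q\bigr)$ is just a slightly more detailed rendering of the paper's estimate via $\max_{u}\Ha(u)$ and $\max_{u}\Gb(u)$ over the compact set.
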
 
	\begin{proof} 
		Thanks to Lemma~\ref{lem:la=la} (with $r=p$) and since ${\lambda}_k(q)<\beta$,
		we can find $\widetilde{A}\in {\Sigma}_k(p)$ satisfying 
		\begin{equation}\label{eq:lk<sup}
			\lambda_k(q) < \sup_{u\in \widetilde{A}} \frac{\|\nabla u\|_q^q}{\|u\|_q^q} < \beta.
		\end{equation}
		Notice that the supremum in \eqref{eq:lk<sup} is attained since $\widetilde{A}$ is compact in $\W$ and does not contain $0$.
		Consequently, 
		\begin{equation}\label{eq:g<01}
			\|\nabla v\|_q^q <\beta \|v\|_q^q 
			\quad \text{for any}~ v \in \widetilde{A}, 
		\end{equation}
		which yields $\widetilde{A} \subset [G_\beta<0]$. 
		Therefore, recalling that $q<p$, we can find $\tilde{t}>0$ such that 
		$$
		\E(tv)\le \dfrac{t^p}{p} \max_{u\in \widetilde{A}} H_\alpha(u) 
		+\dfrac{t^q}{q} \max_{u\in \widetilde{A}} G_\beta(u)<0
		$$
		for any $t \in (0,\tilde{t})$ and $v\in \widetilde{A}$.
		Fixing some $t \in (0,\tilde{t})$, we denote $A = t\widetilde{A}$. 
		It is evident that \eqref{eq:g<01} remains valid for any $v \in A$. 
		Moreover, $A$ is homeomorphic to $\widetilde{A}$ 
		by an odd continuous mapping, and hence $\gamma(A)=\gamma(\widetilde{A}) \geq k$
		(see, e.g., Lemma~\ref{lem:genus}). 
		That is, $A$ is the desired element of ${\Sigma}_k(p)$ which is a subset of $[G_\beta<0] \cap [\E<0]$.
	\end{proof}

	Let us now provide three compactness results.
	\begin{lemma}\label{lem:PS_negative2} 
		Let either of the following assumptions be satisfied:
		\begin{enumerate}[label={\rm(\roman*)}]
			\item\label{lem:PS_negative2:1}
			$\alpha\not\in\sigma(-\Delta_p)$;
			\item\label{lem:PS_negative2:2} 
			$\alpha\in\sigma(-\Delta_p)$ and 
			$G_\beta(v)\not=0$ for all $v\in ES(p;\alpha)\setminus\{0\}$. 
		\end{enumerate}
		Then $\E$ satisfies the Palais--Smale condition at any level $c\in\mathbb{R}$. 
	\end{lemma}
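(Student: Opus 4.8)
The plan is to follow the standard two-step scheme for Palais--Smale sequences of $(p,q)$-type functionals, in close analogy with the proof of Lemma~\ref{lem:PS_positive}. Let $\{u_n\} \subset \W$ satisfy $\E(u_n) \to c$ and $\|\E'(u_n)\|_* \to 0$. The first, and main, step is to establish the boundedness of $\{u_n\}$ in $\W$; once this is known, strong convergence of a subsequence follows routinely from the $(S_+)$-property of $-\Delta_p$ already invoked in Lemma~\ref{lem:PS_positive}.

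For the boundedness, I would argue by contradiction, assuming $s_n := \|\nabla u_n\|_p \to +\infty$ along a subsequence, and pass to the normalized functions $v_n = u_n/s_n$, so that $\|\nabla v_n\|_p = 1$ and, up to a further subsequence, $v_n \rightharpoonup v_0$ weakly in $\W$ and strongly in $L^p$ and $L^q$. Dividing the estimate $|\langle\E'(u_n),\psi\rangle| \leq \|\E'(u_n)\|_*\|\nabla\psi\|_p$ by $s_n^{p-1}$ and using $q<p$ so that the $q$-homogeneous contribution carries the vanishing factor $s_n^{q-p}$, I would obtain $\langle -\Delta_p v_n, v_n - v_0\rangle \to 0$ upon testing with $\psi = v_n - v_0$. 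The $(S_+)$-property then yields $v_n \to v_0$ strongly, whence $\|\nabla v_0\|_p = 1$ and $v_0 \neq 0$; passing to the limit in the normalized equation shows that $v_0$ is an eigenfunction of $-\Delta_p$ associated with $\alpha$, i.e.\ $\alpha \in \sigma(-\Delta_p)$ and $v_0 \in ES(p;\alpha)\setminus\{0\}$. Under assumption~\ref{lem:PS_negative2:1} this already contradicts $\alpha\notin\sigma(-\Delta_p)$.

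Under assumption~\ref{lem:PS_negative2:2} the eigenfunction $v_0$ exists but must satisfy $\Gb(v_0) \neq 0$, and the contradiction should instead come from the energy level. Using the $p$- and $q$-homogeneity $\Ha(u_n) = s_n^p\,\Ha(v_n)$ and $\Gb(u_n) = s_n^q\,\Gb(v_n)$, I would first extract from $\langle\E'(u_n),u_n\rangle = o(1)\,s_n$, after division by $s_n^q$ and using $q>1$, the relation $s_n^{p-q}\Ha(v_n) = -\Gb(v_n) + o(1) \to -\Gb(v_0)$, where $\Ha(v_0)=0$ because $v_0$ is an eigenfunction. Substituting this into $\E(u_n) = \tfrac1p s_n^p\Ha(v_n) + \tfrac1q s_n^q\Gb(v_n) = s_n^q\big[\tfrac1p s_n^{p-q}\Ha(v_n) + \tfrac1q\Gb(v_n)\big]$, the bracket converges to $\tfrac{p-q}{pq}\Gb(v_0) \neq 0$ while $s_n^q \to +\infty$, forcing $|\E(u_n)| \to +\infty$ and contradicting $\E(u_n)\to c$. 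This completes the proof of boundedness in both cases.

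Finally, with $\{u_n\}$ bounded, I would pass to a subsequence with $u_n \rightharpoonup u_0$ weakly in $\W$ and strongly in $L^p$ and $L^q$, test $\E'(u_n)$ with $u_n - u_0$, discard the $\alpha$- and $\beta$-terms (which vanish by strong Lebesgue convergence), and use the monotonicity of $-\Delta_q$ to get $\liminf\langle -\Delta_q u_n, u_n - u_0\rangle \geq 0$; this yields $\limsup\langle -\Delta_p u_n, u_n - u_0\rangle \leq 0$, and the $(S_+)$-property delivers $u_n \to u_0$ in $\W$. The main obstacle is the boundedness step under assumption~\ref{lem:PS_negative2:2}, where the resonance $\alpha\in\sigma(-\Delta_p)$ leaves the leading $p$-homogeneous balance degenerate and is broken only at the level of the energy through the sign-definiteness hypothesis $\Gb(v_0)\neq 0$; the remaining arguments are a routine adaptation of the $(S_+)$ machinery.
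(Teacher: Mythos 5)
Your proposal is correct and follows essentially the same route as the paper: blow-up of the normalized sequence $v_n = u_n/\|\nabla u_n\|_p$ forcing strong convergence to an eigenfunction $v_0 \in ES(p;\alpha)\setminus\{0\}$ (which the paper delegates to the standard argument of \cite[Lemma~3.2]{BobkovTanaka2017}), followed by the $(S_+)$-property once boundedness is secured. Your resonant-case contradiction, deriving $|\E(u_n)| \to +\infty$ from $s_n^{p-q}\Ha(v_n) \to -\Gb(v_0) \neq 0$, is just an algebraic rearrangement of the paper's single identity $o(1) = \langle \E'(u_n)/\|\nabla u_n\|_p^{q-1}, v_n\rangle - p\E(u_n)/\|\nabla u_n\|_p^{q} = \frac{q-p}{q}\Gb(v_n)$, using the same two ingredients.
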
 
	\begin{proof} 
		Let $\{u_n\}$ be any Palais--Smale sequence for $\E$ 
		at  $c\in\mathbb{R}$, that is, 
		$\E(u_n)\to c$ and $\|\E^\prime(u_n)\|_* \to 0$ as $n \to +\infty$. 
		Due to the $(S_+)$-property of the $p$-Laplacian (see, e.g., \cite[Theorem 10]{dinica}), $\E$ satisfies the Palais--Smale condition provided $\{u_n\}$ is bounded. 
		Suppose, contrary to our claim, that $\|\nabla u_n\|_p \to +\infty$, up to a subsequence, and consider the normalized functions $v_n={u_n}/{\|\nabla u_n\|_p}$. 
		Then, by a standard argument 
		(see, e.g., {\cite[Lemma 3.2]{BobkovTanaka2017}}), it can be shown that 
		$\alpha$ is an eigenvalue of $-\Delta_p$ and 
		$\{v_n\}$ converges strongly  in $\W$ to 
		an eigenfunction $v_0$ corresponding to $\alpha$, up to a subsequence. 
		This gives a contradiction in the case of the assumption~\ref{lem:PS_negative2:1}.
		On the other hand, we have
		\begin{equation*} 
			o(1) 
			=\left\langle \dfrac{\E^\prime(u_n)}{\|\nabla u_n\|_p^{q-1}},\,v_n 
			\right\rangle 
			-\dfrac{p\E(u_n)}{\|\nabla u_n\|_p^q} 
			=\dfrac{q-p}{q} G_\beta(v_n)
			=\dfrac{q-p}{q} G_\beta(v_0)+o(1) 
		\end{equation*} 
		as $n\to+\infty$, which contradicts the second assumption in \ref{lem:PS_negative2:2}.
		Consequently, we proved the boundedness 
		of an arbitrary Palais--Smale sequence of $\E$, which leads to the validity of the Palais--Smale condition. 
	\end{proof}
	
	\begin{lemma}\label{lem:PS_negative} 
		Let either of the assumptions \ref{lem:bdd_below:1}, \ref{lem:bdd_below:2}, \ref{lem:bdd_below:3} of Lemma~\ref{lem:bdd_below} be satisfied.
		Then $\E$ satisfies the Palais--Smale condition 
		at any level $c\in\mathbb{R}$.
	\end{lemma}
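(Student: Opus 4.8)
The plan is to deduce the claim from Lemma~\ref{lem:PS_negative2} in every case by checking which of its two alternatives applies. The point is that $\alpha\in\sigma(-\Delta_p)$ is the only obstruction: whenever $\alpha\notin\sigma(-\Delta_p)$, alternative~\ref{lem:PS_negative2:1} applies verbatim, so the whole matter reduces to the resonant situation $\alpha\in\sigma(-\Delta_p)$, where I must verify that $G_\beta(v)\neq 0$ for every $v\in ES(p;\alpha)\setminus\{0\}$ and then invoke alternative~\ref{lem:PS_negative2:2}. Note that any such $v$ satisfies $H_\alpha(v)=0$, since an eigenfunction for the eigenvalue $\alpha$ obeys $\|\nabla v\|_p^p=\alpha\|v\|_p^p$.

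In case~\ref{lem:bdd_below:1} one has $\alpha<\lambda_1(p)$, hence $\alpha\notin\sigma(-\Delta_p)$, and Lemma~\ref{lem:PS_negative2}~\ref{lem:PS_negative2:1} gives the conclusion immediately. In case~\ref{lem:bdd_below:2} the value $\alpha=\lambda_1(p)$ is the first (simple) eigenvalue, so $ES(p;\lambda_1(p))=\mathbb{R}\varphi_p$ and every nonzero eigenfunction is a multiple of $\varphi_p$. Using the $q$-homogeneity of $G_\beta$ and the definition~\eqref{def:values} of $\beta_*$, I compute $G_\beta(c\varphi_p)=|c|^q(\|\nabla\varphi_p\|_q^q-\beta\|\varphi_p\|_q^q)=|c|^q\|\varphi_p\|_q^q(\beta_*-\beta)$, which is strictly positive because $\beta<\beta_*$. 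Hence the nonvanishing condition of Lemma~\ref{lem:PS_negative2}~\ref{lem:PS_negative2:2} holds and that alternative applies.

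The genuinely delicate case is~\ref{lem:bdd_below:3}, namely $\lambda_1(p)<\alpha<\alpha_*$ and $\lambda_1(q)<\beta\le\beta_*(\alpha)$, in the resonant subcase $\alpha\in\sigma(-\Delta_p)$. Fix $v\in ES(p;\alpha)\setminus\{0\}$; since $\alpha>\lambda_1(p)$, the function $v$ is a higher eigenfunction and therefore changes sign in $\Omega$. From $H_\alpha(v)=0$ and the characterization~\eqref{def:beta_*2} of $\beta_*(\alpha)$ I get $\|\nabla v\|_q^q/\|v\|_q^q\ge\beta_*(\alpha)\ge\beta$, so that $G_\beta(v)\ge(\beta_*(\alpha)-\beta)\|v\|_q^q\ge 0$. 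If $\beta<\beta_*(\alpha)$ this is already strict (equivalently, Lemma~\ref{lem:HG<0}~\ref{lem:H<0} gives $v\in[G_\beta>0]$), so it remains to exclude the borderline possibility $\beta=\beta_*(\alpha)$ with $G_\beta(v)=0$, in which case $v$ is a minimizer for $\beta_*(\alpha)$.

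The crux, which I expect to be the main obstacle, is to show that a sign-changing eigenfunction cannot minimize $\beta_*(\alpha)$; I would settle this by a strong maximum principle argument. If $v$ as above were a minimizer, then $|v|$ would be a nonnegative minimizer too, since $H_\alpha(|v|)=H_\alpha(v)=0$ and $\|\nabla|v|\|_q=\|\nabla v\|_q$, $\||v|\|_q=\|v\|_q$. Now $|v|$ is not a $p$-eigenfunction (a nonnegative one would be $\varphi_p$, forcing the eigenvalue $\lambda_1(p)\neq\alpha$), so $H_\alpha'(|v|)\neq 0$ and the constraint qualification for the minimization problem~\eqref{def:beta_*} holds at $|v|$. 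The Lagrange multiplier rule then yields $\mu\ge 0$ and $\lambda=\beta_*(\alpha)>0$ with
\begin{equation*}
\mu\,p\,(-\Delta_p|v|)-q\,\Delta_q|v|=\lambda\,q\,|v|^{q-2}|v|+\mu\,p\,\alpha\,|v|^{p-2}|v|
\end{equation*}
in the weak sense; the case $\mu=0$ is impossible, as it would make $|v|$ a nonnegative $q$-eigenfunction for the eigenvalue $\beta_*(\alpha)>\lambda_1(q)$. Thus $\mu>0$, the left-hand side is a bona fide $(p,q)$-operator, the right-hand side is nonnegative, and by regularity (Remark~\ref{rem:reg}) together with the strong maximum principle one gets $|v|>0$ in $\Omega$. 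Consequently $v$ does not vanish in the connected set $\Omega$ and has constant sign, contradicting that $v$ is a higher eigenfunction. This rules out $G_\beta(v)=0$, so Lemma~\ref{lem:PS_negative2}~\ref{lem:PS_negative2:2} applies and the Palais--Smale condition follows. The positivity of nonnegative minimizers of $\beta_*(\alpha)$ used here is in the spirit of \cite{BobkovTanaka2017} and could alternatively be quoted from there.
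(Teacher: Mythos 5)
Your proof is correct, and it follows the paper's skeleton --- reduce everything to Lemma~\ref{lem:PS_negative2} by verifying $G_\beta(v)\neq 0$ on $ES(p;\alpha)\setminus\{0\}$ in the resonant cases --- but it diverges from the paper at the decisive borderline step $\beta=\beta_*(\alpha)$ in case~\ref{lem:bdd_below:3}. The paper takes a putative minimizing eigenfunction $v$, tests $H_\alpha'(v)=0$ with $v_\pm$ to get $H_\alpha(v_\pm)=0$, uses the mediant inequality $\frac{\|\nabla v_+\|_q^q+\|\nabla v_-\|_q^q}{\|v_+\|_q^q+\|v_-\|_q^q}\geq\min\bigl\{\frac{\|\nabla v_+\|_q^q}{\|v_+\|_q^q},\frac{\|\nabla v_-\|_q^q}{\|v_-\|_q^q}\bigr\}$ to conclude that \emph{both} $v_+$ and $v_-$ minimize $\beta_*(\alpha)$, and then quotes \cite[Lemma~5.1]{BobkovTanaka2017} as a black box to produce two positive solutions $t_+v_+$, $t_-v_-$ of \eqref{eq:D} with disjoint supports --- absurd. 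You instead pass to $|v|$ (also a minimizer) and re-derive the positivity in-line: constraint qualification via $H_\alpha'(|v|)\neq 0$ (a nonnegative $p$-eigenfunction would force $\alpha=\lambda_1(p)$), the KKT rule with $\mu\geq 0$, exclusion of $\mu=0$ because $\beta_*(\alpha)>\lambda_1(q)$, and the strong maximum principle for the operator $\mu p(-\Delta_p)+q(-\Delta_q)$ to get $|v|>0$, contradicting that $v$ changes sign. Your route essentially re-proves the content of the cited Lemma~5.1 (which is itself a multiplier-plus-maximum-principle argument), so it is more self-contained but requires verifying two pieces of machinery the paper outsources: the Lagrange multiplier rule for the inequality-constrained problem \eqref{def:beta_*} in $\W$, and an SMP covering the mixed $(p,q)$-operator (Pucci--Serrin applies, after bootstrapping $C^{1,\mu}$-regularity of $|v|$ from the Euler--Lagrange equation as in Remark~\ref{rem:reg}). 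Two small points worth making explicit in your write-up: elements of $ES(p;\alpha)\setminus\{0\}$ are genuine eigenfunctions (limits of eigenfunctions satisfy $H_\alpha'(v)=0$ by continuity of $H_\alpha'$), a fact both you and the paper use tacitly; and your treatment of case~\ref{lem:bdd_below:2} via simplicity of $\lambda_1(p)$ and $G_\beta(c\varphi_p)=|c|^q\|\varphi_p\|_q^q(\beta_*-\beta)>0$ is exactly what is implicit in the paper's phrase ``direct corollary of Lemma~\ref{lem:PS_negative2}''.
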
 
	\begin{proof}
		The result under the assumptions \ref{lem:bdd_below:1} and \ref{lem:bdd_below:2} of Lemma~\ref{lem:bdd_below}, as well as \ref{lem:bdd_below:3} when $\alpha \not\in \sigma(-\Delta_p)$, is a direct corollary of Lemma~\ref{lem:PS_negative2}.
		Let us discuss the case of the assumption~\ref{lem:bdd_below:3} when $\alpha \in \sigma(-\Delta_p)$.
		Let us take any $v\in ES(p;\alpha)\setminus\{0\}$, and so
		$H_\alpha(v)=0$ and $\langle H_\alpha'(v),\xi\rangle = 0$ for any $\xi \in \W$.
		Since $\alpha > \lambda_1(p)$, we have $v_\pm \not\equiv 0$, where $v_\pm = \max\{\pm v, 0\}$. 
		Considering $\xi=v_\pm$, we get $H_\alpha(v_\pm) = 0$.
		Therefore, $v$ and $v_\pm$ are admissible functions for the definition~\eqref{def:beta_*} of $\beta_*(\alpha)$, which gives
		$$
		\beta \leq \beta_*(\alpha) \leq \min\left\{\frac{\|\nabla v\|_q^q}{\|v\|_q^q}, \frac{\|\nabla v_\pm\|_q^q}{\|v_\pm\|_q^q}\right\}.
		$$
		If $\beta < \beta_*(\alpha)$, then $G_\beta(v)>0$. 
		Recalling that $v\in ES(p;\alpha)\setminus\{0\}$ is chosen arbitrarily, we apply Lemma~\ref{lem:PS_negative2}~\ref{lem:PS_negative2:2} to get the desired Palais--Smale condition.
		Let us assume that $\beta = \beta_*(\alpha)$.
		Suppose that $\beta_*(\alpha) = \|\nabla v\|_q^q/\|v\|_q^q$ for some $v\in ES(p;\alpha)\setminus\{0\}$, that is, $v$ is a minimizer of $\beta_*(\alpha)$.
		Then $v_+$ and $v_-$ are also minimizers of $\beta_*(\alpha)$ since
		$$
		\beta_*(\alpha)
		=
		\frac{\|\nabla v\|_q^q}{\|v\|_q^q} 
		= 
		\frac{\|\nabla v_+\|_q^q+\|\nabla v_-\|_q^q}{\|v_+\|_q^q+\|v_-\|_q^q} 
		\geq 
		\min\left\{\frac{\|\nabla v_+\|_q^q}{\|v_+\|_q^q}, \frac{\|\nabla v_-\|_q^q}{\|v_-\|_q^q}\right\} 
		\geq 
		\beta_*(\alpha).
		$$
		Recalling that $\lambda_1(p)<\alpha<\alpha_*$ and $\beta = \beta_*(\alpha)$, \cite[Lemma~5.1]{BobkovTanaka2017} 
		guarantees the existence of $t_\pm>0$ such that both $t_+ v_+$ and $t_- v_-$ are \textit{positive} solutions of \eqref{eq:D}, which is clearly impossible.
		Consequently, we have 
		$$
		\beta = \beta_*(\alpha) < \frac{\|\nabla v\|_q^q}{\|v\|_q^q}
		\quad \text{for any}~ v\in ES(p;\alpha)\setminus\{0\},
		$$
		and hence  $G_\beta(v) > 0$ for any $v\in ES(p;\alpha)\setminus\{0\}$.
		The desired Palais--Smale condition again follows from Lemma~\ref{lem:PS_negative2}~\ref{lem:PS_negative2:2}.
	\end{proof} 
	
	Finally, we discuss a compactness result related to the assumption \ref{lem:bdd_below:2x} of Lemma~\ref{lem:bdd_below}.
	Notice that, unlike Lemma~\ref{lem:bdd_below} \ref{lem:bdd_below:2x}, this result requires the strict inequality $p>2q$, and we do not know if it remains valid when $p=2q$.
	\begin{lemma}\label{lem:PS_negative3} 
		Let $\alpha=\lambda_1(q)$, $\beta=\beta_*$, and $p > 2q$.
		Then $\E$ satisfies the Palais--Smale condition at any level $c < 0$. 
	\end{lemma}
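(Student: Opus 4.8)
The plan is to run the blow-up scheme used in the proof of Lemma~\ref{lem:PS_negative2} and to supplement it with a \emph{quantitative} resonance estimate based on the improved Poincar\'e inequality. Let $\{u_n\}\subset\W$ be a Palais--Smale sequence for $\E$ at a level $c<0$, so that $\E(u_n)\to c$ and $\|\E^\prime(u_n)\|_*\to 0$. As before, by the $(S_+)$-property of $-\Delta_p$ (see \cite{dinica}) it suffices to establish the boundedness of $\{u_n\}$ in $\W$. Suppose, by contradiction, that $t_n:=\|\nabla u_n\|_p\to+\infty$ along a subsequence, and set $v_n=u_n/t_n$. Dividing $\E^\prime(u_n)$ by $t_n^{p-1}$ and passing to the limit (the $q$-homogeneous part being divided by the extra factor $t_n^{p-q}\to+\infty$), the standard argument of \cite[Lemma~3.2]{BobkovTanaka2017} shows that $v_n\to v_0$ strongly in $\W$ with $\|\nabla v_0\|_p=1$, and that $v_0$ is an eigenfunction of $-\Delta_p$ associated with the resonant value $\alpha=\lambda_1(p)$. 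Since $\lambda_1(p)$ is simple with eigenspace $\mathbb{R}\varphi_p$, we conclude that $v_0=c_0\varphi_p$ with $|c_0|=1/\|\nabla\varphi_p\|_p$.

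The source of difficulty is that $(\lambda_1(p),\beta_*)$ is a \emph{doubly resonant} point. On one hand, $\Ha(v_0)=0$ because $v_0\in\mathbb{R}\varphi_p$ and $\alpha=\lambda_1(p)$. On the other hand, by the very definition of $\beta_*$ in \eqref{def:values} together with $\beta=\beta_*$,
\[
\Gb(v_0)=|c_0|^q\,\Gb(\varphi_p)=|c_0|^q\big(\|\nabla\varphi_p\|_q^q-\beta_*\|\varphi_p\|_q^q\big)=0 .
\]
Consequently the argument of Lemma~\ref{lem:PS_negative2}~\ref{lem:PS_negative2:2}, which rests on $\Gb(v_0)\neq 0$, breaks down, and one must control the precise rates at which $\Ha(v_n)$ and $\Gb(v_n)$ vanish.

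First I would measure the distance of $v_n$ to the eigenray: choose $s_n\in\mathbb{R}$ so that $d_n:=\|\nabla(v_n-s_n\varphi_p)\|_p$ is minimal; then $s_n\to c_0$ and $d_n\to 0$, so for large $n$ the functions $v_n$ lie in a neighborhood of $\mathbb{R}\varphi_p$. The crucial lower bound comes from the improved Poincar\'e inequality of \cite{takac} (valid under the $C^{1,\kappa}$-regularity of $\Omega$ by \cite{BK}, see Remark~\ref{rem:poinc}), which for $p\ge 2$ furnishes a \emph{quadratic} estimate $\Ha(v_n)\ge C\,d_n^2$ with $C>0$ independent of $n$. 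For the $q$-part, since $\Gb(s_n\varphi_p)=|s_n|^q\Gb(\varphi_p)=0$ and $\Gb$ is locally Lipschitz on bounded subsets of $\W\hookrightarrow W_0^{1,q}$, one gets $\Gb(v_n)\ge -L\,d_n$ with $L>0$ independent of $n$. Inserting these into the energy and minimizing the resulting quadratic in $d_n$, I obtain
\[
\E(u_n)=\frac{1}{p}\,t_n^p\,\Ha(v_n)+\frac{1}{q}\,t_n^q\,\Gb(v_n)
\ge \frac{C}{p}\,t_n^p d_n^2-\frac{L}{q}\,t_n^q d_n
\ge -\frac{L^2 p}{4Cq^2}\,t_n^{\,2q-p}.
\]
Here the hypothesis $p>2q$ is decisive: it makes $2q-p<0$, so $t_n^{\,2q-p}\to 0$ and therefore $\liminf_{n}\E(u_n)\ge 0$. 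This contradicts $\E(u_n)\to c<0$, proving that $\{u_n\}$ is bounded and completing the argument.

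The hard part will be the rigorous justification of the improved Poincar\'e inequality in exactly the quadratic form used above, i.e.\ with the full $W^{1,p}$-distance to the ray $\mathbb{R}\varphi_p$ and a uniform constant, under only $C^{1,\kappa}$-regularity; the delicate point is the uniformity of the constant near the degenerate set $\{\nabla\varphi_p=0\}$, where the second variation of $\Ha$ degenerates. This analysis also explains transparently why the borderline case $p=2q$ is excluded: there $t_n^{\,2q-p}\equiv 1$, the balancing no longer forces $\liminf_n\E(u_n)\ge 0$, and a negative-energy blow-up sequence of the form $t_n\big(\varphi_p+O(t_n^{\,q-p})\psi\big)$ along a transverse direction $\psi$ is not ruled out.
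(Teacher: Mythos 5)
Your overall scheme --- blow-up along the Palais--Smale sequence, identification of the double resonance at $(\lambda_1(p),\beta_*)$, and a quantitative improved-Poincar\'e estimate in which $p>2q$ produces decay --- is the same as the paper's, and your algebra (minimizing $\frac{C}{p}t_n^pd_n^2-\frac{L}{q}t_n^qd_n$ to get $-\frac{L^2p}{4Cq^2}t_n^{2q-p}$) is correct as far as it goes. The genuine gap is exactly the step you yourself flag as ``the hard part'': the quadratic lower bound $\Ha(v_n)\ge C\,d_n^2$ with $d_n$ the \emph{full} $W^{1,p}$-gradient distance to the ray $\mathbb{R}\varphi_p$. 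For $p>2$ the improved Poincar\'e inequality of \cite{takac} (and its extension in \cite{BK}) does \emph{not} provide this: it controls only the degenerate weighted quantity
\begin{equation*}
E_n = \intO |\nabla\varphi_p|^{p-2}|\nabla v_n|^2\,dx + \intO |\nabla v_n|^p\,dx,
\end{equation*}
and since the weight $|\nabla\varphi_p|^{p-2}$ vanishes on the critical set of $\varphi_p$, while $\|\nabla v_n\|_p^p\ll\|\nabla v_n\|_p^2$ for small $v_n$ (as $p>2$), $E_n$ does not dominate $c\,d_n^2$. Your Lipschitz bound $|\Gb(v_n)|\le L\,d_n$ is then measured in a norm incompatible with the lower bound actually available, and the balancing does not close: replacing $d_n^2$ by $E_n$ leaves the cross term $t_n^q d_n$ unabsorbed (Young's inequality against $t_n^pd_n^p$ produces a remainder of order $t_n^{(q-1)p/(p-1)}$, which diverges).

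The paper circumvents precisely this mismatch in two ways. First, instead of expanding $\E(u_n)=\frac{1}{p}t_n^p\Ha(v_n)+\frac{1}{q}t_n^q\Gb(v_n)$ and balancing in $t_n$, it observes that $u_n\in[G_\beta<0]\cap[H_\alpha>0]$ (by $\alpha=\lambda_1(p)$, $\E(u_n)\to c<0$, and Lemma~\ref{lem:HG<0}~\ref{lem:H<02}) and passes to the $0$-homogeneous fibered functional $\J$ of \eqref{eq:J}: since $t(u_n)$ is a minimum point along the ray, $\J(w_n)=\J(u_n)=\E(t(u_n)u_n)\le\E(u_n)=c+o(1)<0$, which eliminates $t_n$ altogether (here $w_n=u_n/\|u_n\|_p$, decomposed as $w_n=\gamma_n\varphi_p+v_n$ with an $L^2$-orthogonality condition). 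Second, the estimate imported from \cite[Proposition~11]{BobkovTanaka2017} bounds \emph{both} ingredients of $\J$ in the same weighted quantity --- $\Ha(w_n)$ from below by a multiple of $E_n$ and $|\Gb(w_n)|$ from above by a multiple of $E_n^{1/2}$ --- so that the quotient in \eqref{eq:J} yields $\J(w_n)\ge -C\,E_n^{\frac{p-2q}{2(p-q)}}\to 0$, with $p>2q$ entering only through the positivity of the exponent; this contradicts $\J(w_n)\le c+o(1)<0$. To repair your argument you would have to re-derive the upper bound on $|\Gb(v_n)|$ in the weighted norm as well (this is where weighted Cauchy--Schwarz and integrability of negative powers of $|\nabla\varphi_p|$ enter in \cite{BobkovTanaka2017}), at which point you will have essentially reconstructed the paper's proof; the full-norm quadratic inequality you postulate is not available in the cited literature and is not expected to hold for $p>2$.
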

	\begin{proof}
		Let $\{u_n\} \subset \W$ be such that $\E'(u_n) \to 0$ and $\E(u_n) \to c<0$ as $n \to +\infty$.
		As in the proof of Lemma~\ref{lem:PS_negative2}, it is sufficient to show that $\{u_n\}$ is bounded.
		Suppose, by contradiction, that $\|\nabla u_n\|_p \to +\infty$, up to a subsequence.
		This implies that $\|u_n\|_p \to +\infty$. 
		Indeed, if we suppose that $\{\|u_n\|_p\}$ is bounded, then $\{\|u_n\|_q\}$ is also bounded, and hence we get $\E(u_n) \to +\infty$, which contradicts the assumption $\E(u_n) \to c$. 
		Applying \cite[Lemma~3.2]{BobkovTanaka2017}, we deduce that the sequence of normalized functions $w_n = u_n/\|u_n\|_p$ converges  strongly in $\W$ to either $\varphi_p$ or  $-\varphi_p$ (recall that we assume $\|\varphi_p\|_p=1$), up to a subsequence.
		
		Let us now decompose each $w_n$ as $w_n = \gamma_n \varphi_p + v_n$, where $\gamma_n \in \mathbb{R}$ and $v_n \in \W$ satisfy
		$$
		\gamma_n = \frac{\intO w_n \varphi_p \,dx}{\|\varphi_p\|_2^2}
		\quad \text{and} \quad
		\intO \varphi_p v_n \,dx = 0.
		$$ 
		The convergence of $\{w_n\}$ and the assumption $p>2q \,(>2)$ imply that either
		$\gamma_n \to 1$ or $\gamma_n \to -1$, and that $\|\nabla v_n\|_p \to 0$.
		Thanks to the assumption $\alpha = \lambda_1(p)$, we have $\Ha(u_n) \geq 0$. 
		Therefore, since $\E(u_n) \to c<0$, we conclude that $\Gb(u_n)<0$ for all sufficiently large $n$.
		Then Lemma~\ref{lem:HG<0}~\ref{lem:H<02} yields $u_n \in [G_\beta < 0] \cap [H_\alpha > 0]$. 
		According to Proposition~\ref{prop:minpoint}, there exists a unique minimum point $t(u_n)$ of $t \mapsto \E(tu_n)$, and hence
		\begin{equation}\label{eq:jjee}
			\J(w_n) = \J(u_n) \equiv \E(t(u_n)u_n) \leq \E(u_n) = c+o(1) < 0,
		\end{equation}
		where $\J$ is the $0$-homogeneous  functional defined in \eqref{eq:J}.
		Arguing now in much the same way as in the proof of \cite[Proposition~11]{BobkovTanaka2017}  (see, more precisely, the part on pp.~1233-1234) and substituting the improved Poincar\'e inequality from \cite{takac} by \cite[Theorem~1.2]{BK} (which provides the same inequality under weaker assumptions on $\Omega$ than in \cite{takac}), 	
		we deduce that 	
		$$
		\liminf_{n\to+\infty}
		\J(w_n)
		\geq -C \,\limsup_{n\to+\infty} \left( \intO |\nabla \varphi_p|^{p-2} |\nabla v_n|^2 \,dx + \intO |\nabla v_n|^p \,dx \right)^\frac{p-2q}{2(p-q)} = 0,
		$$
		since $\|\nabla v_n\|_p \to 0$ and $p>2q$.	
		This is a contradiction to \eqref{eq:jjee}, and hence $\{u_n\}$ is a bounded Palais--Smale sequence for $\E$.
	\end{proof}

	\section{Proofs of the main results}\label{sec:proofs}
	
	\subsection{Proof of Theorem~\ref{thm1}} 
	
	In the case of the assumptions~\ref{thm1:1},  \ref{thm1:2}, \ref{thm1:2x} of Theorem~\ref{thm1}, the existence of at least $k$ distinct pairs of critical points of $\E$ in $[\E<0]$ can be established by standard methods (see, e.g.,  \cite[Theorem~8]{Clark}), since $\E$ is bounded from below in $\W$ and satisfied the Palais--Smale condition thanks to the results of Section~\ref{sec:propertiesE}.
	However, in the case of the assumption~\ref{thm1:3}, i.e., when $\alpha > \lambda_1(p)$ and $\lambda_k(q)<\beta \leq \beta_*(\alpha)$, $\E$ is not bounded from below anymore, 
	and we need to argue differently. 
	Our approach will be based on the analysis of the restriction of $\E$ to the sublevel set $[G_\beta<0]$, and it will cover all the assumptions~\ref{thm1:1}, \ref{thm1:2}, \ref{thm1:2x}, \ref{thm1:3} in a unified way.
	
	\begin{proof*}{Theorem~\ref{thm1}}
		For $j=1,\ldots,k$, we define 
		\begin{equation}\label{def:Thm1-b} 
			a_j 
			=
			\inf
			\left\{
			\max_{u\in A} \E(u):~ A\in {\Sigma}_j(p)~ \text{and}~ A \subset [G_\beta<0]
			\right\},
		\end{equation} 
		where ${\Sigma}_j(p)$ is given by \eqref{eq:Sigma-r}.
		Thanks to Lemmas~\ref{lem:bdd_below} and \ref{lem:surface}, 
		we have 
		$$
		-\infty<a_1\le a_2\le \cdots \le a_k<0.
		$$ 
		Let us show that each $a_j$ is a critical value of $\E$. 
		Suppose, by contradiction, that some $a_j$ is a regular value of $\E$. 
		Since $\E$ satisfies the Palais--Smale condition at any negative level by Lemma~\ref{lem:PS_negative} (under either of the assumptions~\ref{thm1:1}, \ref{thm1:2}, \ref{thm1:3}) or Lemma~\ref{lem:PS_negative3} (under the assumption~\ref{thm1:2x}), 
		the deformation lemma (see, e.g., \cite[Chapter~II, Theorem 3.4 and Remark~3.5]{Struwe}) 
		guarantees the existence of $\varepsilon>0$ and $\eta\in C([0,1]\times \W,\W)$ 
		with the following properties: 
		\begin{itemize} 
			\item[{\rm (i)}] $\eta(0,u)=u$ for any $u\in \W$; 
			\item[{\rm (ii)}] $\E(\eta(t,u))$ is nonincreasing with respect to $t$ for any $u\in\W$; 
			\item[{\rm (iii)}] $\eta(1,u)\in [\E\le a_j-\varepsilon]$ provided 
			$u\in [\E \le a_j+\varepsilon]$; 
			\item[{\rm (iv)}] $\eta(t,u)$ is odd with respect to $u$ for any $t\in[0,1]$. 
		\end{itemize} 
		For $\varepsilon>0$ as above, by the definition of $a_j (<0)$, 
		we can choose $A$ such that $A \in {\Sigma}_j(p)$, $A \subset [G_\beta<0]$, and 
		$\max_{v\in A} \E(v)< \min\{a_j+\varepsilon,\,0\}$. 
		Now we define
		$$
		\tilde{A} =\eta(1,A) =\{\eta(1,u):~ u\in A\}. 
		$$
		Clearly, $\tilde{A}$ is symmetric and compact in $\W$. 
		Moreover, we have $\gamma(\tilde{A})\ge \gamma(A)\ge j$ by, e.g., Lemma~\ref{lem:genus}. 
		Therefore, if we can prove that
		$\tilde{A}\subset [G_\beta<0]$, then 
		$\tilde{A}$ is an admissible set for the definition \eqref{def:Thm1-b} of $a_j$, and we arrive at a contradiction 
		as follows: 
		$$
		a_j\le \max_{v\in \tilde{A}}\E(v)
		=\max_{u\in A} \E(\eta(1,u)) \le a_j-\varepsilon.  
		$$
		Let us show that $\tilde{A}\subset [G_\beta<0]$. 
		Suppose, by contradiction, that there exists $u\in A$ such that 
		$\eta(1,u)\in [G_\beta\ge 0]$. 
		Since $\eta$ is continuous and $u\in A\subset [G_\beta<0]$, we can find 
		$t_0\in (0,1]$ such that $\eta(t_0,u)\in [G_\beta=0]$.
		Thanks to Lemma~\ref{lem:HG<0} \ref{lem:H<03}, we have $\E(\eta(t_0,u)) \geq 0$.
		However, this
		contradicts to 
		$$
		\E(\eta(t_0,u))\le \E(u)\le \max_{v\in A}\E(v) 
		<\min \{a_j+\varepsilon,\, 0\}.
		$$
		Consequently, we have shown that each $a_j$  is a critical value of $\E$. 
		If all these values are distinct, we obtain at least $k$ distinct critical points (and hence distinct \textit{pairs} of critical points) of $\E$, which completes the proof.
		Assume now that some critical values coincide, that is, 
		$$
		a:=a_j=a_{j+1}=\cdots=a_{j+l} ~(<0) 
		$$
		for some $j \in \{1,\dots,k-1\}$ and $l \in \{1,\dots,k-j\}$. 
		Since $\E$ satisfies the Palais--Smale condition at $a$,  
		and $K_a \cap [\E<0]\subset [G_\beta<0]$ (see \eqref{eq:EHG}), where $K_a$ is the critical set of $\E$ at the level $a$, it can be proved by standard arguments as, e.g, in  \cite[Chapter~II, Lemma~5.6]{Struwe} that $\gamma(K_a)\ge l+1$. Consequently, by \cite[Chapter~II, Observation~5.5]{Struwe}, $K_a$ contains infinitely many distinct pairs of critical points of $\E$. 
	\end{proof*}
	
	\begin{remark} 
		Under the assumption $[G_\beta<0] \neq \emptyset$, we have $a_1>-\infty$ if and only if $\inf_{\N} \E > -\infty$, as it follows from the proof of Lemma~\ref{lem:bdd_below}.
		Therefore, Theorem~\ref{thm1} can also be proved by considering the restriction of $\E$ to the Nehari manifold $\N$. 
		We proceed in this way in order to prove Theorem~\ref{thm3}. 
	\end{remark}

	\subsection{Proof of Theorem~\ref{thm2}}
	
	For convenience, we start by providing the following known auxiliary result.
	\begin{lemma}[\protect{\cite[Lemma~3.2]{BobkovTanakaNodal1}}]\label{lem:bdd_below_Y} 
		Let $\alpha,\beta\in\mathbb{R}$ and $\lambda>\max\{0,\alpha\}$. 
		Then $\E$ is bounded from below on 
		the set 
		\begin{equation}\label{def:Y} 
			Y(\lambda) = \{u\in\W:~ \|\nabla u\|_p^p \ge \lambda \|u\|_p^p \}. 
		\end{equation}
	\end{lemma}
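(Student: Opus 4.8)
The plan is to bound the two pieces $\Ha$ and $\Gb$ of $\E$ separately on $Y(\lambda)$ and then exploit the superhomogeneity $p>q$. First I would use the defining inequality $\|\nabla u\|_p^p \ge \lambda\|u\|_p^p$ of $Y(\lambda)$ together with the hypothesis $\lambda>\max\{0,\alpha\}$ to produce a coercive lower bound on the $p$-part. Concretely, when $\alpha\ge 0$ one has $\alpha\|u\|_p^p \le (\alpha/\lambda)\|\nabla u\|_p^p$ on $Y(\lambda)$, so $\Ha(u)\ge (1-\alpha/\lambda)\|\nabla u\|_p^p$ with $1-\alpha/\lambda>0$; when $\alpha<0$ the term $-\alpha\|u\|_p^p$ is nonnegative and simply $\Ha(u)\ge\|\nabla u\|_p^p$. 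In either case there is a constant $c_1>0$, depending only on $\alpha$ and $\lambda$, with $\Ha(u)\ge c_1\|\nabla u\|_p^p$ for all $u\in Y(\lambda)$.

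Next I would control the $q$-part from below by discarding the nonnegative gradient term and invoking the continuous embedding $\W\hookrightarrow L^q(\Omega)$, which is valid since $\Omega$ is bounded and $q<p$. This yields
\begin{equation*}
\Gb(u)=\|\nabla u\|_q^q-\beta\|u\|_q^q \ge -|\beta|\,\|u\|_q^q \ge -|\beta|\,C\,\|\nabla u\|_p^q
\end{equation*}
for some embedding constant $C>0$. This bound holds on all of $\W$ and uses no information from $Y(\lambda)$.

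Combining the two estimates gives, for every $u\in Y(\lambda)$,
\begin{equation*}
\E(u)=\frac{1}{p}\,\Ha(u)+\frac{1}{q}\,\Gb(u) \ge \frac{c_1}{p}\,\|\nabla u\|_p^p-\frac{|\beta|\,C}{q}\,\|\nabla u\|_p^q.
\end{equation*}
Writing $s=\|\nabla u\|_p\ge 0$, the right-hand side is a real function of the single variable $s$ of the form $a s^p-b s^q$ with $a>0$, $b\ge 0$ and $p>q$; such a function is continuous on $[0,+\infty)$ and tends to $+\infty$ as $s\to+\infty$, hence attains a finite minimum. That minimum is the desired lower bound for $\E$ on $Y(\lambda)$.

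I expect no serious obstacle: the argument is a direct coercivity estimate, and the only point requiring care is the case distinction on the sign of $\alpha$ in the first step, which is precisely where the hypothesis $\lambda>\max\{0,\alpha\}$ (rather than merely $\lambda>\alpha$) is needed to keep $c_1$ strictly positive. The essential structural fact driving the whole proof is that, because $q<p$, the $p$-homogeneous term dominates the $q$-homogeneous one at infinity, so the possibly destabilizing contribution of $\beta$ in $\Gb$ is absorbed by the coercive $p$-growth enforced on $Y(\lambda)$.
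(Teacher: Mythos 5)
Your proof is correct and complete: the coercivity bound $H_\alpha(u)\ge c_1\|\nabla u\|_p^p$ on $Y(\lambda)$ (using $\lambda>\max\{0,\alpha\}$), the embedding estimate $\|u\|_q^q\le C^q\|\nabla u\|_p^q$, and the elementary fact that $s\mapsto as^p-bs^q$ with $a>0$, $b\ge 0$, $p>q$ is bounded below on $[0,+\infty)$ together settle the claim. The paper itself does not prove this lemma but imports it by citation from \cite[Lemma~3.2]{BobkovTanakaNodal1}, and your argument is precisely the standard one behind that cited result, so there is nothing to add.
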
 
	
	\begin{proof*}{Theorem~\ref{thm2}} 
		For $j=l,\ldots,l+k-1$, we define 
		\begin{align} 
			b_j :=
			\inf
			\left\{
			\max_{u\in A} \E(u):~ A\in \Sigma_j(p)
			\right\},
		\end{align} 
		where $\Sigma_j(p)$ is given by \eqref{eq:Sigma-r}.
		Let us show that 
		$$
		-\infty <b_{l}\le \cdots \le b_{k+l-1}<0. 
		$$
		First, we prove the lower bound $-\infty <b_{l}$.	
		By the definition \eqref{eq:lambdak} of $\lambda_{l}(p)$, for any $A\in \Sigma_{l}(p)$ 
		we can find $u_0 \in A$ such that 
		$\|\nabla u_0\|_p^p\ge \lambda_{l}(p)\|u_0\|_p^p$. 
		This means that $u_0\in Y(\lambda_{l}(p))$, 
		where the set $Y(\lambda_{l}(p))$ is 
		defined by \eqref{def:Y}. 
		Hence, recalling the assumption $\alpha<\lambda_{l}(p)$ and using Lemma~\ref{lem:bdd_below_Y}, we obtain the inequalities
		$$
		\max_{u\in A}\E(u) \ge \E(u_0) \ge \inf\left\{\E(v):~ v \in Y(\lambda_{l}(p))\right\} >-\infty,
		$$
		which imply the desired lower bound $-\infty <b_{l}$.
		Second, we justify the upper bound $b_{k+l-1}<0$.
		Since $\beta>{\lambda}_{k+l-1}(q)$, 
		Lemma~\ref{lem:surface} gives the existence of 
		$A\in {\Sigma}_{k+l-1}(p)$ such that $A \subset [G_\beta < 0]$ and $\max_{u\in A}\E(u)<0$, and hence
		$b_{k+l-1}\le \max_{u\in A}\E(u)<0$.
		
		Finally, due to Lemma~\ref{lem:PS_negative2}, $\E$ satisfies the Palais--Smale condition at any level 
		$c\in\mathbb{R}$ under the imposed assumptions on $\alpha$ and $\beta$, and hence standard arguments based on the deformation lemma (cf.\ the proof of Theorem~\ref{thm1}) guarantee that every $b_j$ is a negative critical value of $\E$. 
		In the same way in the proof of Theorem~\ref{thm1}, we also conclude that there exist at least $k$ distinct pairs of critical points of $\E$. 
	\end{proof*}

	\subsection{Proof of Theorem~\ref{thm3}} 
	
	The proof will be based on the application of the following general theorem to 
	the functional $\E$ on the manifold $\N\cap B_{\alpha,\beta}^+$ introduced in Section~\ref{sec:nehari}. 
	
	\begin{theorem}[{\cite[Corollary~4.1]{Szulkin}}]\label{thm:Szulkin} 
		Let $M$ be a closed symmetric $C^1$-submanifold of a real Banach space $X$ and $0\not\in M$. 
		Assume that a functional $I \in C^1(M,\mathbb{R})$ is even and bounded from below. Define 
		\begin{align*} 
			c_j
			&=
			\inf
			\left\{
			\max_{u\in A} I(u):~ A\in \Gamma_j
			\right\},\\ 
			\Gamma_j &=\left\{A\subset M:~ A~\mathrm{is~  symmetric, \, compact \ in}~  X, ~\mathrm{and}~ 
			\gamma(A)\ge j\,\right\}. 
		\end{align*}
		If $\Gamma_l\not=\emptyset$ for some $l \in \mathbb{N}$ and if 
		$I$ satisfies the Palais--Smale condition at the level $c_j$ for any $j \in \{1,\dots,l\}$, 
		then every $c_j$ is a critical value of $I$, 
		and hence $I$ has at least $l$ distinct pairs of critical points. 
	\end{theorem}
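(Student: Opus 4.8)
The plan is to prove this by the classical Ljusternik--Schnirelmann scheme adapted to a $C^1$-manifold, with the $\mathbb{Z}_2$-symmetry carried throughout via the Krasnoselskii genus. First I would check that the minimax values are well defined and finite for every $j \in \{1,\dots,l\}$. Since $\Gamma_j$ consists of the admissible sets of genus at least $j$, a larger index gives a smaller family, so $\Gamma_l \subset \Gamma_{l-1} \subset \cdots \subset \Gamma_1$; as $\Gamma_l \neq \emptyset$ by hypothesis, each $\Gamma_j$ is nonempty and hence $c_j < +\infty$. The assumption that $I$ is bounded from below on $M$ gives $c_j > -\infty$, and the nesting of the families immediately yields the monotonicity $c_1 \le c_2 \le \cdots \le c_l$.

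The technical heart of the argument is an equivariant deformation lemma on $M$. Because $M$ is a $C^1$-submanifold of $X$ rather than a linear space, I cannot flow along an ambient gradient; instead I would construct a locally Lipschitz \emph{pseudo-gradient vector field} for the constrained functional $I$ that is everywhere tangent to $M$, via a partition of unity subordinate to a cover of the regular set by pseudo-gradient charts. The crucial additional requirement is \emph{oddness}: since $M$ is symmetric and $I$ is even, the constrained derivative satisfies $I'(-u) = -I'(u)$ under the antipodal map, so symmetrizing the field against the $\mathbb{Z}_2$-action produces an odd pseudo-gradient without spoiling the pseudo-gradient inequality or the tangency. Integrating its (suitably truncated and renormalized) negative yields a flow $\eta \in C([0,1]\times M, M)$ with $\eta(0,\cdot)=\mathrm{id}$, with $t \mapsto I(\eta(t,u))$ nonincreasing, and with each $\eta(t,\cdot)$ odd. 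If $c_j$ is a regular value, the Palais--Smale condition at $c_j$ forces the norm of $I'$ to be bounded below by a positive constant on a strip $\{\,|I - c_j| \le 2\varepsilon\,\}$ after shrinking $\varepsilon$, which is exactly what makes the flow push $[I \le c_j+\varepsilon]$ into $[I \le c_j-\varepsilon]$ in unit time.

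Granting this deformation lemma, the remaining steps are routine. To see that each $c_j$ is critical I argue by contradiction: if $c_j$ were regular, choose $A \in \Gamma_j$ with $\max_A I < c_j+\varepsilon$ and set $\tilde A = \eta(1,A)$; then $\tilde A$ is symmetric and compact, while $\gamma(\tilde A) \ge \gamma(A) \ge j$ because the genus does not decrease under odd continuous maps (see, e.g., Lemma~\ref{lem:genus}). Hence $\tilde A \in \Gamma_j$, yet $\max_{\tilde A} I \le c_j - \varepsilon < c_j$, contradicting the definition of $c_j$. For the multiplicity count, the critical sets $K_c$ are compact by the Palais--Smale condition, so they have finite genus; if several consecutive values coincide, say $c = c_j = \cdots = c_{j+m}$, I would combine the subadditivity and the excision (continuity) property of the genus with the deformation lemma applied away from a symmetric neighborhood of $K_c$ to deduce $\gamma(K_c) \ge m+1$. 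Since a compact symmetric set of genus at least $2$ is necessarily infinite, this produces infinitely many pairs at that level. Either way, counting the distinct critical values, each of which contributes at least the pair $\{u,-u\}$, gives at least $l$ distinct pairs of critical points of $I$.

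The step I expect to be the main obstacle is precisely the construction of the \emph{odd, tangent} pseudo-gradient flow on the constrained set $M$: the manifold structure rules out the naive ambient deformation, the $\mathbb{Z}_2$-equivariance must be preserved through the entire partition-of-unity construction, and one must verify that the pseudo-gradient inequality, the tangency to $M$, and the local Lipschitz property all survive both the symmetrization and the renormalization needed to guarantee that the flow exists globally in time on $M$.
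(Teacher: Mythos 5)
The paper does not prove this statement at all: it is imported verbatim from Szulkin's work (\cite[Corollary~4.1]{Szulkin}), so the only meaningful comparison is with Szulkin's own proof. Your minimax bookkeeping is the standard and correct skeleton: the nesting $\Gamma_l\subset\cdots\subset\Gamma_1$, finiteness and monotonicity of the $c_j$, the contradiction via $\tilde A=\eta(1,A)$ and genus monotonicity under odd continuous maps, the estimate $\gamma(K_c)\ge m+1$ for coincident levels, and the fact that a compact symmetric set of genus at least $2$ is infinite. The genuine gap sits exactly where you flag ``the main obstacle'' and then wave it through: on a manifold that is only $C^1$, with $I$ only $C^1$, the locally Lipschitz, odd, \emph{tangent} pseudo-gradient field you propose to build by a chart-wise partition of unity does not come out of that construction. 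Inside a single flattening chart all is well, but the transition maps of a $C^1$ atlas have merely \emph{continuous} differentials, so transporting and patching the chart-wise Lipschitz fields produces a field that is tangent and continuous but in general not locally Lipschitz; and in an infinite-dimensional Banach space a merely continuous vector field need not generate a flow (Peano's theorem fails there, by Godunov's counterexample), so ``integrating its negative'' is unjustified. The $\mathbb{Z}_2$-symmetrization is harmless, but it cannot restore the lost Lipschitz regularity. This is not a technicality: it is precisely the difficulty that Szulkin's paper was written to overcome, and his proof of Corollary~4.1 does \emph{not} run through an equivariant pseudo-gradient flow. It is based on Ekeland's variational principle, which produces Palais--Smale sequences at the minimax levels $c_j$ directly, and obtains the multiplicity estimate at coincident levels by arguments combining Ekeland-type perturbations with the monotonicity, subadditivity, and continuity properties of the genus, rather than by a unit-time deformation of sublevel sets.

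Your scheme does become a complete proof under the strictly stronger hypothesis that the manifold and the functional are of class $C^{1,1}$ (or in the classical settings of \cite{rabinowitz} and \cite{Struwe}, where the ambient deformation machinery applies), but as written it establishes a weaker theorem than the $C^1$ statement at hand. To repair it in the stated generality you would have to either reproduce Szulkin's Ekeland-based argument or replace the pseudo-gradient flow by a deformation theorem from continuous or metric critical point theory, where tangency and Lipschitz issues are circumvented by construction.
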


	\begin{proof*}{Theorem~\ref{thm3}}
		For $j=1,\ldots,l$, we define 
		\begin{align} 
			c_j 
			&=
			\inf
			\left\{
			\max_{u\in A} \EN(u):~ A\in \Gamma_j
			\right\}, 
			\label{def:Thm2-c} \\ 
			\Gamma_j &=\left\{A\subset \N\cap B_{\alpha,\beta}^+:~A ~\text{is symmetric, compact in}~ \W, ~\text{and}~ \gamma(A)\ge j\, \right\} 
			\label{def:Thm2-set}. 
		\end{align} 
		Since we assume that $\alpha>\lambda_l(p)$ and $\beta < \beta_*(\alpha)$, 
		Proposition~\ref{prop:LeastEnergy} yields 
		$$
		0 < c_1 \leq \cdots \leq c_l.
		$$ 
		Let us show that $\Gamma_l \neq \emptyset$. 
		By the definition \eqref{eq:lambdak} of $\lambda_l(p)$ and the assumption $\alpha>\lambda_l(p)$, there exists
		$A\in \Sigma_l(p)$ satisfying 
		\begin{equation}\label{eq:uh0g}
			\max_{u\in A} \frac{\|\nabla u\|_p^p}{\|u\|_p^p} <\alpha,
			~~\text{and hence}~
			\max_{u\in A} H_\alpha(u) < 0 < \min_{u\in A} G_\beta(u),
		\end{equation}
		where the last inequality follows from
		Lemma~\ref{lem:HG<0}~\ref{lem:H<0}.
		Consequently, for any $u \in A$ we have $t(u)u \in \N\cap B_{\alpha,\beta}^+$, 
		where $t(u)>0$ is defined by \eqref{tu}, see Proposition~\ref{prop:minpoint}.
		Let us observe that the mapping $u \mapsto t(u) u$ is odd and continuous for $u \in A$, as it follows from \eqref{tu} and the second part of \eqref{eq:uh0g}.
		Therefore, considering
		$$
		\widetilde{A}=\left\{t(u)u:~ u\in A\right\} \subset \N\cap B_{\alpha,\beta}^+,
		$$
		we deduce that $\widetilde{A}$ is symmetric and compact in $\W$, and 
		$\gamma(\widetilde{A}) \geq \gamma(A)\ge l$ by, e.g.,
		Lemma~\ref{lem:genus}.
		As a result, $\widetilde{A}\in \Gamma_l$, i.e., $\Gamma_l \neq \emptyset$. 
		Thanks to Proposition~\ref{prop:MFD} and Lemma~\ref{lem:PS_positive}, 
		Theorem~\ref{thm:Szulkin} leads to the existence of 
		$l$ distinct pairs of critical points of $\EN$ in $[\EN>0]$. 
		Finally, applying Proposition~\ref{prop:critical}, we conclude that these critical points of $\EN$ are also critical points of the original functional $\E$, which completes the proof.
	\end{proof*}

	\appendix
	\section{Characterizations of \texorpdfstring{$\lambda_k(q)$}{lambda-k(q)}}\label{sec:appendixA}
	
	In this section, we discuss alternative characterizations of variational eigenvalues $\lambda_k(q)$ using narrower and larger constraint sets than in the standard minimax definition \eqref{eq:lambdak}.
	The results of this section require less restrictive assumptions on a bounded domain than that additionally imposed in Section~\ref{sec:intro}. 
	Because of this, we will provide explicit assumptions on $\Omega$ in each statement.
	
	Let $q, r>1$.
	Recall the definition \eqref{eq:lambdak} of $\lambda_k(q)$:
	\begin{equation*}
		\lambda_k(q) 
		=
		\inf
		\left\{
		\max_{u\in A} \frac{\|\nabla u\|_q^q}{\|u\|_q^q}:~ A\in \Sigma_k(q)
		\right\},
	\end{equation*}
	and consider a related object $\lambda_k(q;r)$ defined as
	\begin{align}\label{eq:lambdak-tilde}
		\lambda_k(q;r)
		=
		\inf
		\left\{
		\sup_{u\in A} \frac{\|\nabla u\|_q^q}{\|u\|_q^q}:~ A\in {\Sigma}_k(r)
		~\text{and}~
		A \subset W^{1,q}
		\right\},
	\end{align}
	where $\Sigma_k(r)$ is given by \eqref{eq:Sigma-r}, i.e.,
	\begin{equation*}
		\Sigma_k(r) =\left\{\, A\subset W_0^{1,r} \setminus \{0\}:~
		A\ {\rm is\ symmetric,\ compact\ in}\ W^{1,r}_0,\ 
		{\rm and}\ \gamma(A;\Wr)\ge k\,\right\}.
	\end{equation*}
	Throughout this section, we use the expanded notation $\gamma(A;X)$ for the Krasnoselskii genus to emphasize its dependence on a topological vector space $X$ with respect to which the continuity of an odd mapping $h$ in the definition \eqref{eq:genus} of the genus is understood.
	
	Clearly, $\lambda_k(q;q)=\lambda_k(q)$ for any $q>1$, while $\lambda_k(q;r)$ is defined using different constraints than $\lambda_k(q)$ whenever $q \neq r$.
	The constraint $A \subset W^{1,q}$ in \eqref{eq:lambdak-tilde} is trivial in the case $q<r$ since $\Wr \subset \Wq$, but it is required when $q>r$, in order for the Rayleigh quotient $\|\nabla u\|_q^q/\|u\|_q^q$ to be defined over $A\in {\Sigma}_k(r)$.
	
	We are interested in the relation between $\lambda_k(q;r)$ and $\lambda_k(q)$.
	Let us start with the following observation on the Krasnoselskii genus.
	\begin{lemma}\label{lem:genus}
		Let $X, Y$ be topological vector spaces.
		Let $A \subset X \setminus \{0\}$ be symmetric and closed.
		Let $\pi: A \to Y$ be odd and continuous.
		Let $\pi(A) \subset Y \setminus \{0\}$ be (symmetric and) closed.
		Then 	
		$\gamma(A;X) \leq \gamma(\pi(A);Y)$.
		If, in addition, the inverse mapping $\pi^{-1} : \pi(A) \to A$ is continuous (and hence $\pi(A)$ and $A$ are homeomorphic), then $\gamma(A;X) = \gamma(\pi(A);Y)$.
	\end{lemma}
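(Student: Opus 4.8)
The plan is to argue directly from the definition \eqref{eq:genus} of the genus, using that oddness and continuity are both preserved under composition, so that an admissible competitor for $\pi(A)$ can be pulled back to one for $A$.

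First I would prove the inequality $\gamma(A;X) \leq \gamma(\pi(A);Y)$. If $\gamma(\pi(A);Y) = +\infty$ there is nothing to check, so I would set $m := \gamma(\pi(A);Y) < +\infty$ and invoke \eqref{eq:genus} to obtain an odd map $g \in C(\pi(A), \mathbb{R}^m \setminus \{0\})$. I would then form the composition $g \circ \pi : A \to \mathbb{R}^m \setminus \{0\}$. Its continuity is immediate from the continuity of $\pi$ (with respect to the topology of $X$ restricted to $A$) and of $g$, while its oddness follows from the one-line chain $g(\pi(-u)) = g(-\pi(u)) = -g(\pi(u))$, valid because both $\pi$ and $g$ are odd. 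Thus $g \circ \pi$ is an admissible test map in the definition of $\gamma(A;X)$, giving $\gamma(A;X) \leq m$.

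For the equality under the additional hypothesis, I would first note that $\pi^{-1} : \pi(A) \to A$ is itself odd: writing $y = \pi(u)$ with $u \in A$, the oddness of $\pi$ yields $-y = \pi(-u)$ and hence $\pi^{-1}(-y) = -u = -\pi^{-1}(y)$. Since $\pi^{-1}$ is assumed continuous, I can simply repeat the argument of the previous paragraph with the roles of $A$ and $\pi(A)$ (and of $\pi$ and $\pi^{-1}$) interchanged to obtain the reverse inequality $\gamma(\pi(A);Y) \leq \gamma(A;X)$, and combining the two inequalities gives the claimed equality.

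There is essentially no serious obstacle here; the only steps demanding a moment of attention are the verification that the composed map is odd and the observation that $\pi^{-1}$ inherits oddness from $\pi$. The hypotheses that $A$ and $\pi(A)$ are symmetric, closed, and disjoint from the origin serve only to guarantee that both genera are well defined and that the composed maps take values in the punctured Euclidean space, so they enter the argument in a purely bookkeeping capacity.
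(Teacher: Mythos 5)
Your proposal is correct and follows essentially the same route as the paper's proof: pull back an admissible odd continuous map $h \in C(\pi(A);\mathbb{R}^k\setminus\{0\})$ via composition with $\pi$ to get the inequality, then apply the same argument to $\pi^{-1}$ for the equality. Your explicit verification that $\pi^{-1}$ inherits oddness from $\pi$ is a small detail the paper leaves implicit, but otherwise the two arguments coincide.
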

	\begin{proof}
		If  $\gamma(\pi(A);Y) = +\infty$, then the inequality $\gamma(A;X) \leq \gamma(\pi(A);Y)$ is trivial.
		Assume that $k=\gamma(\pi(A);Y) < +\infty$.
		By the definition \eqref{eq:genus} of $\gamma(\pi(A);Y)$, there exists  an odd mapping $h \in C(\pi(A);\mathbb{R}^k \setminus\{0\})$.
		Since $\pi$ is odd and continuous, we see that $h \circ \pi$ is odd and $h \circ \pi \in C(A;\mathbb{R}^k \setminus\{0\})$. 
		Therefore, $h \circ \pi$ is admissible for the definition \eqref{eq:genus} of $\gamma(A;X)$, which implies that $\gamma(A;X) \leq \gamma(\pi(A);Y)$.
		If, in addition, $\pi^{-1}: \pi(A) \to A$ is continuous, then the same arguments give $\gamma(\pi(A);Y) \leq \gamma(A;X)$, which yields the equality between genuses.
	\end{proof}
	\begin{remark}
		Evidently, if $A$ in Lemma~\ref{lem:genus} is compact in $X$, then $\pi(A)$ is compact in $Y$. 
		We also refer to \cite[Proposition~7.5:~$2^\circ$]{rabinowitz} or \cite[Chapter~II, Proposition~5.4~($4^\circ$)]{Struwe} for the statement of Lemma~\ref{lem:genus} in the case when $X$ and $Y$ coincide.
	\end{remark}
	
	In view of the continuity of the canonical embedding $i: \Wr \to \Wq$ for $1<q<r$ defined as $i(u)=u$, Lemma~\ref{lem:genus} implies that 
	$$
	k \leq \gamma(A;\Wr) \leq \gamma(A;\Wq) \quad \text{for any}~ A \in \Sigma_k(r),
	$$
	which yields $A \in \Sigma_k(q)$.
	A similar relation holds in the case $q>r>1$. 
	This leads to the following remark.
	\begin{remark}\label{rem:append}
		The following assertions hold:
		\begin{enumerate}[label={\rm(\roman*)}]
			\item\label{rem:append:1} If $1<q<r$, then $\Sigma_k(q) \supset \Sigma_k(r)$ and hence $\lambda_k(q) \leq \lambda_k(q;r)$.
			\item\label{rem:append:2} If $q>r>1$, then
			$\Sigma_k(q) \subset \Sigma_k(r)$ and hence $\lambda_k(q) \geq \lambda_k(q;r)$.
		\end{enumerate}
	\end{remark}
	
	\subsection{The case \texorpdfstring{$q<r$}{q<r}}\label{sec:appA1}
	First, we show that $\lambda_k(q) = \lambda_k(q;r)$ whenever $q<r$, which therefore provides an alternative characterization of $\lambda_k(q)$.
	In the case $k=1$, this claim is simple thanks to the density of $C_0^\infty(\Omega)$ in both $\Wr$ and $\Wq$, and our aim is to develop this approach for higher indices $k$.
	
	\begin{lemma}\label{lem:la=la}
		Let $\Omega \subset \mathbb{R}^N$ be a bounded domain, $N \geq 1$.
		Let $1<q<r$. Then 
		${\lambda}_k(q) = \lambda_k(q;r)$ for all $k \in \mathbb{N}$.
	\end{lemma}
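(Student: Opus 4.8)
The plan is to prove the two inequalities separately. One of them is essentially free: by Remark~\ref{rem:append}~\ref{rem:append:1}, the strict inequality $q<r$ forces $\Sigma_k(r)\subset\Sigma_k(q)$, so every set admissible for $\lambda_k(q;r)$ is admissible for $\lambda_k(q)$ with the same Rayleigh quotient, giving $\lambda_k(q)\le\lambda_k(q;r)$. Hence the entire content lies in the reverse inequality $\lambda_k(q;r)\le\lambda_k(q)$, which I would establish by approximation: starting from a nearly optimal $A\in\Sigma_k(q)$, I construct a nearby competitor $\widetilde A\in\Sigma_k(r)$ whose $q$-Rayleigh quotient exceeds that of $A$ by only an arbitrarily small amount.

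To build $\widetilde A$, I would fix $A\in\Sigma_k(q)$ and $\varepsilon>0$. Since $A$ is compact in $\Wq$ and $0\notin A$, both $\|\nabla u\|_q$ and $\|u\|_q$ are bounded above and below by positive constants on $A$, so the quotient $R_q(u):=\|\nabla u\|_q^q/\|u\|_q^q$ is uniformly continuous on $A$ (with the help of the Poincar\'e inequality to pass from gradient norms to $L^q$ norms). Using density of $C_0^\infty(\Omega)$ in $\Wq$, I pick for each $u\in A$ some $\psi_u\in C_0^\infty(\Omega)$ with $\|\nabla(u-\psi_u)\|_q$ small, and by continuity a relatively open ball $B_u\subset A$ on which $\psi_u$ remains an $\varepsilon$-approximation. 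Compactness yields a finite subcover $B_{u_1},\dots,B_{u_m}$; choosing a continuous partition of unity $\{\rho_i\}$ subordinate to it, I set $\Phi(v)=\sum_i\rho_i(v)\psi_{u_i}$, a continuous map from $A$ into $C_0^\infty(\Omega)\subset\Wr$ with $\|\nabla(v-\Phi(v))\|_q<\varepsilon$ for all $v\in A$. Because $\Phi$ need not be odd, I would symmetrize by setting $\pi(v)=\tfrac12\bigl(\Phi(v)-\Phi(-v)\bigr)$; as $A$ is symmetric and both $v,-v\in A$ are $\varepsilon$-approximated by $\Phi$, the map $\pi$ is odd, continuous into $\Wr$, and still satisfies $\|\nabla(v-\pi(v))\|_q\le\varepsilon$.

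It then remains to check that $\widetilde A:=\pi(A)$ is admissible for $\lambda_k(q;r)$ and nearly optimal. It is compact in $\Wr$ (a continuous image of a compact set landing in the finite-dimensional span of $\psi_{u_1},\dots,\psi_{u_m}$), symmetric (since $\pi$ is odd and $A$ symmetric), and, once $\varepsilon$ is taken below $\inf_A\|\nabla\cdot\|_q$, bounded away from $0$, so $\widetilde A\subset\Wr\setminus\{0\}$ and trivially $\widetilde A\subset W^{1,q}$. Genus preservation follows from Lemma~\ref{lem:genus} with $X=\Wq$, $Y=\Wr$: oddness and continuity of $\pi$ plus closedness of $\widetilde A$ give $\gamma(\widetilde A;\Wr)\ge\gamma(A;\Wq)\ge k$, so $\widetilde A\in\Sigma_k(r)$. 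Finally, the uniform bound $\|\nabla(v-\pi(v))\|_q\le\varepsilon$ together with uniform continuity of $R_q$ on $A$ yields $\sup_{w\in\widetilde A}R_q(w)\le\max_{v\in A}R_q(v)+C_A\varepsilon$, whence $\lambda_k(q;r)\le\max_{v\in A}R_q(v)+C_A\varepsilon$; letting $\varepsilon\to0$ for fixed $A$ and then taking the infimum over $A\in\Sigma_k(q)$ gives $\lambda_k(q;r)\le\lambda_k(q)$.

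The hard part will be the construction of the odd continuous smoothing map $\pi$ that is simultaneously close to the identity in $\Wq$ and takes values in $\Wr$: the partition-of-unity approximation itself is routine, but one must verify carefully that symmetrization preserves the approximation bound (not merely halves it) and that the resulting image retains genus at least $k$. This last point is exactly what Lemma~\ref{lem:genus} was isolated to supply, so the remaining work is checking its hypotheses rather than reproving the genus estimate.
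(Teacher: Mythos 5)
Your proposal is correct, and its overall architecture coincides with the paper's: both prove $\lambda_k(q;r)\le\lambda_k(q)$ by taking a near-optimal $A\in\Sigma_k(q)$, using compactness of $A$ and density of $C_0^\infty(\Omega)$ in $\Wq$ to push $A$ into a finite-dimensional space $V$ of smooth functions (where the $\Wq$- and $\Wr$-topologies agree by norm equivalence), preserving the genus via Lemma~\ref{lem:genus}, and controlling the Rayleigh quotient by a triangle-inequality/Poincar\'e estimate exactly as in \eqref{eq:upper-bound11}--\eqref{eq:upper-bound5}. The one substantive divergence is the choice of the odd continuous map into $V$: the paper uses the metric projection $P_\varepsilon$ onto $V$, whose existence, uniqueness, and continuity are imported from best-approximation theory in the uniformly convex space $\Wq$ (the citations to \cite{singer}), but whose oddness and error bound $\|\nabla(u-P_\varepsilon u)\|_q\le\|\nabla(u-v_n)\|_q<2\varepsilon$ are automatic; you instead build a partition-of-unity average $\Phi$ and symmetrize it as $\pi(v)=\tfrac12\bigl(\Phi(v)-\Phi(-v)\bigr)$. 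The point you flagged resolves cleanly: writing $v-\pi(v)=\tfrac12\bigl(v-\Phi(v)\bigr)-\tfrac12\bigl((-v)-\Phi(-v)\bigr)$ and using that $-v\in A$ is also $\varepsilon$-approximated, the triangle inequality gives $\|\nabla(v-\pi(v))\|_q\le\varepsilon$, so symmetrization does not degrade the bound. Your route is more elementary (partitions of unity on a compact metric space instead of metric-projection theory), while the paper's map is canonical and quicker to verify once Singer's results are granted; your direct limiting argument ($\varepsilon\to 0$ for fixed $A$, then infimum over $A$) versus the paper's contradiction is purely cosmetic. One small wording caveat: ``uniform continuity of $R_q$ on $A$'' is not literally what you use, since $\pi(v)\notin A$; what is needed (and what your Poincar\'e remark in fact delivers, as in \eqref{eq:upper-bound31}--\eqref{eq:upper-bound5}) is a quantitative perturbation bound on an $\varepsilon$-neighborhood of $A$, valid because $\min_{v\in A}\|v\|_q>0$ and $\max_{v\in A}\|\nabla v\|_q<\infty$ by compactness.
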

	\begin{proof}
		We know from Remark~\ref{rem:append}~\ref{rem:append:1} that 
		${\lambda}_k(q)\le \lambda_k(q;r)$ for all $k$. 
		Suppose, by contradiction, that ${\lambda}_k(q) < \lambda_k(q;r)$ for some $k$.
		Consequently, there exists $A \in \Sigma_k(q)$ such that
		\begin{equation}\label{eq:upper-bound0}
			\lambda_k(q)
			\leq
			\max_{u\in A} \frac{\|\nabla u\|_q^q}{\|u\|_q^q}
			<
			\lambda_k(q;r).
		\end{equation}
		To prove the lemma, we show the existence of an element of $\Sigma_k(r)$ sufficiently close to $A$ in the norm of $W^{1,q}$, which will give a contradiction to \eqref{eq:upper-bound0}.
		
		Let us take any $\varepsilon \in (0,\text{dist}_q(A,0)/2)$, where
		$$
		\text{dist}_q(A,0)
		=
		\min_{u \in A}\|\nabla u\|_q  > 0
		$$
		since $A$ is compact in $W_0^{1,q}$ and does not contain $0$.
		Let $B_\varepsilon^q(u)$ be an open ball in $W_0^{1,q}$ of radius $\varepsilon>0$ centered at $u$.
		Thanks to the compactness of $A$ in $W_0^{1,q}$,
		we can extract a finite symmetric subcover $\{B_\varepsilon^q(u_n) \cup B_\varepsilon^q(-u_n)\}_{n=1}^K$ from a cover $\cup_{u \in A}B_\varepsilon^q(u)$, where $K= K(\varepsilon) \geq 1$.	
		
		For every $n \in \{1,\dots,K\}$ we choose some $v_n \in C_0^\infty(\Omega) \cap B_\varepsilon^q(u_n)$.
		In particular, we have 
		$\|\nabla(u - v_n)\|_q < 2\varepsilon$ for any $u \in B_\varepsilon^q(u_n)$.
		Consider a closed finite dimensional linear subspace of $W_0^{1,q}$ spanned by $\{v_n\}_{n=1}^K$:
		$$
		V = V(\varepsilon) = \text{span} \{v_1,\dots,v_K\},
		$$
		and define the metric projection $P_\varepsilon: W_0^{1,q} \mapsto V$ in the standard way:
		$$
		\|\nabla (u-P_\varepsilon(u))\|_q = \inf_{v \in V}\|\nabla (u-v)\|_q, \quad u \in W_0^{1,q}.
		$$
		Our aim is to prove that $P_\varepsilon(A) \in \Sigma_k(r)$, after a possible decrease in the value of $\varepsilon \in (0,\text{dist}_q(A,0)/2)$.
		The operator $P_\varepsilon$ is well defined in the sense that $P_\varepsilon(u)$ exists and unique for any $u \in W_0^{1,q}$, see, e.g., \cite[Corollary 3.4 $(1^\circ)$, $(3^\circ)$, p.~111]{singer}. 
		Moreover, $P_\varepsilon$ is a continuous mapping, see, e.g., \cite[Theorem~5.4, p.~251]{singer}.
		It is also clear that $P_\varepsilon$ is odd, i.e., $P_\varepsilon(-u)=-P_\varepsilon(u)$. 
		As a consequence of the last two facts, we have $\gamma(P_\varepsilon(A);\Wq) \geq k$, see, e.g., Lemma~\ref{lem:genus}. 
		The continuity of $P_\varepsilon$ and the compactness of $A$ in $\Wq$ imply that $P_\varepsilon(A)$ is compact in $W_0^{1,q}$.
		Moreover, $0 \not\in P_\varepsilon(A)$ thanks to the upper bound on $\varepsilon$.
		It is clear from the regularity of the basis elements of $V$ that $V$ is a closed finite dimensional linear subspace of $\Wr$, and hence $P_\varepsilon(A) \subset \Wr$. 
		Since in the finite dimensional space $V$ all norms are equivalent, we conclude that $P_\varepsilon(A)$ is compact in $\Wr$. 
		Applying Lemma~\ref{lem:genus} with the inclusion mapping $i:V \subset \Wq \to \Wr$ defined as $i(u)=u$, we get $\gamma(P_\varepsilon(A);\Wr) = \gamma(P_\varepsilon(A);\Wq) \geq k$, which finishes the proof that $P_\varepsilon(A) \in \Sigma_k(r)$.
		
		Finally, let us explicitly obtain an upper bound for the Rayleigh quotient for $\lambda_k(q;r)$ to get a contradiction to \eqref{eq:upper-bound0}. 	
		For any $u \in A$ there exists  $n \in \{1,\dots,K\}$ such that $u \in B_\varepsilon^q(u_n)$.
		By the triangle inequality and the definition of $P_\varepsilon(u)$, we have
		\begin{equation}\label{eq:upper-bound11}
			\left|
			\|\nabla u\|_q
			-
			\|\nabla P_\varepsilon(u)\|_q
			\right|
			\leq
			\|\nabla (u-P_\varepsilon(u))\|_q
			= \inf_{v \in V}\|\nabla (u-v)\|_q
			\leq 
			\|\nabla (u-v_n)\|_q 
			< 2\varepsilon,
		\end{equation}
		and hence
		\begin{equation}\label{eq:upper-bound1}
			\|\nabla P_\varepsilon(u)\|_q 
			\leq 
			\|\nabla u\|_q
			+
			2\varepsilon
			\quad \text{for any}~ u \in A.
		\end{equation}
		The inequality \eqref{eq:upper-bound11} 
		and the Poincar\'e inequality give
		\begin{equation}\label{eq:upper-bound31}
			\left| \|u\|_q - \|P_\varepsilon(u)\|_q\right|
			\leq 
			\|u-P_\varepsilon(u)\|_q \leq \lambda_1^{-1/q}(q) \, \|\nabla (u-P_\varepsilon(u))\|_q
			\leq 
			2 \lambda_1^{-1/q}(q) \varepsilon
		\end{equation}
		for any $u \in A$, 
		which yields
		\begin{equation}\label{eq:upper-bound3}
			1 
			\leq
			\frac{\|P_\varepsilon(u)\|_q}{\|u\|_q} + \frac{2 \lambda_1^{-1/q}(q)\varepsilon}{\|u\|_q}.
		\end{equation}
		Combining \eqref{eq:upper-bound1} and \eqref{eq:upper-bound3}, we get
		$$
		\|\nabla P_\varepsilon(u)\|_q
		\leq
		\|\nabla u\|_q
		+2\varepsilon
		\leq
		\|\nabla u\|_q
		\frac{\|P_\varepsilon(u)\|_q}{\|u\|_q}
		+
		\|\nabla u\|_q
		\frac{2 \lambda_1^{-1/q}(q)\varepsilon}{\|u\|_q}
		+2\varepsilon.
		$$
		Recalling that $0 \not\in P_\varepsilon(A)$ and dividing by $\|P_\varepsilon(u)\|_q$, we arrive at
		\begin{equation}\label{eq:upper-bound4}
			\frac{\|\nabla P_\varepsilon(u)\|_q}{\|P_\varepsilon(u)\|_q}
			\leq
			\frac{\|\nabla u\|_q}{\|u\|_q}
			+
			\frac{\|\nabla u\|_q}{\|u\|_q}
			\cdot
			\frac{2 \lambda_1^{-1/q}(q)\varepsilon}{\|P_\varepsilon(u)\|_q}
			+
			\frac{2\varepsilon}{\|P_\varepsilon(u)\|_q}.
		\end{equation}
		Moreover, we also deduce from \eqref{eq:upper-bound31} that
		\begin{equation}\label{eq:upper-bound5}
			\|P_\varepsilon(u)\|_q 
			\geq 
			\|u\|_q - 2 \lambda_1^{-1/q}(q) \varepsilon 
			\geq
			\min_{v \in A} \|v\|_q - 2 \lambda_1^{-1/q}(q) \varepsilon.
		\end{equation}
		Combining \eqref{eq:upper-bound4} and \eqref{eq:upper-bound5} and taking $\varepsilon>0$ smaller if necessary, we conclude that
		$$
		\lambda_k^{1/q}(q;r) 
		\leq 
		\max_{u \in A}\frac{\|\nabla P_\varepsilon(u)\|_q}{\|P_\varepsilon(u)\|_q}
		\leq
		\max_{u \in A}
		\frac{\|\nabla u\|_q}{\|u\|_q} + O(\varepsilon)
		< 
		\lambda_k^{1/q}(q;r),
		$$
		where the last inequality is given by our contradictory assumption \eqref{eq:upper-bound0}. 
		A contradiction.
	\end{proof}
	
	\begin{remark}\label{rem:gap}
		Lemma~\ref{lem:la=la} is needed for the proofs of Theorems~\ref{thm1} and~\ref{thm2} (see, more explicitly, the proof of Lemma~\ref{lem:surface}), cf.\ \cite{ZongoRuf2}.
	\end{remark}

	\subsection{The case \texorpdfstring{$q>r$}{q>r}}\label{sec:appA2}
	
	Let us now discuss the relation between $\lambda_k(q;r)$ and ${\lambda}_k(q)$ in the case $q<r$, which is converse to that considered above.
	Although $\lambda_k(q;r) \leq {\lambda}_k(q)$ for all $k$ (see Remark~\ref{rem:append}~\ref{rem:append:2}), the precise relation between $\lambda_k(q;r)$ and ${\lambda}_k(q)$ heavily depends on the regularity of $\Omega$ and it is closely connected to the problem of continuity of the mapping $q \mapsto \lambda_k(q)$ from the left, see \cite[Section~7]{Lind1},  \cite{DM,HK,parini}, and references therein.
	We provide two opposite results in this direction.
	
	\begin{lemma}
		Let $N \geq 2$ and $1<r<q$.
		Let $\Omega \subset \mathbb{R}^N$ be a bounded domain such that 
		\begin{equation}\label{eq:WWW0}
			W_0^{1,q_-}
			:=
			W^{1,q} \cap \bigcap_{1<s<q} W_0^{1,s} \neq W_0^{1,q}.
		\end{equation}
		Then $\lambda_k(q;r) < {\lambda}_k(q)$ for any $k \in \mathbb{N}$.
	\end{lemma}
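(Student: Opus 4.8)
The plan is to interpose, between $\lambda_k(q;r)$ and $\lambda_k(q)$, the variational eigenvalues of the $q$-Laplacian on the \emph{enlarged} space $W_0^{1,q_-}$, and then to show that enlarging $W_0^{1,q}$ to the strictly larger $W_0^{1,q_-}$ strictly lowers every such eigenvalue. Writing $R_q(u)=\|\nabla u\|_q^q/\|u\|_q^q$, I would introduce
\[
\mu_k
=
\inf\left\{\max_{u\in A} R_q(u):~
A\subset W_0^{1,q_-}\setminus\{0\}\ \text{symmetric, compact in}\ W^{1,q},\ \gamma(A;W_0^{1,q_-})\ge k\right\}.
\]
On the bounded domain $\Omega$ the inclusions $W_0^{1,q}\subset W_0^{1,q_-}\subset W_0^{1,r}$ are continuous (the first being norm-preserving, the second bounded), so Lemma~\ref{lem:genus} applied to these inclusion maps shows that each admissible set for $\lambda_k(q)$ is admissible for $\mu_k$, and that each admissible set for $\mu_k$ lies in $\Sigma_k(r)$ and is contained in $W^{1,q}$. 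This yields $\lambda_k(q;r)\le\mu_k\le\lambda_k(q)$, and reduces the lemma to the strict inequality $\mu_k<\lambda_k(q)$.

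To see where the hypothesis enters, I would reformulate $\mu_k$ via extension by zero. For $u\in W_0^{1,q_-}$ one has $u\in W_0^{1,r}$, so the zero-extension $\bar u$ satisfies $\nabla\bar u=\overline{\nabla u}$ in $\mathcal{D}'(\mathbb{R}^N)$; since $\overline{\nabla u}\in L^q$, this gives $\bar u\in W^{1,q}(\mathbb{R}^N)$ with support in $\overline\Omega$ and with the gradient and $L^q$ norms preserved. Thus $\mu_k$ is morally a Dirichlet $q$-eigenvalue on the larger set obtained by filling in the positive-$q$-capacity part of $\partial\Omega$, and the assumption $W_0^{1,q_-}\neq W_0^{1,q}$ is exactly the failure of the reverse extension characterization at the exponent $q$, i.e.\ the statement that this set strictly exceeds $\Omega$ on a set of positive $q$-capacity.

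The strict drop is then a domain-monotonicity phenomenon. For $k=1$ it is a one-line first variation: fixing $w\in W_0^{1,q_-}\setminus W_0^{1,q}$ and the first eigenfunction $\varphi_q\in W_0^{1,q}$,
\[
\frac{d}{dt}\Big|_{t=0} R_q(\varphi_q+tw)
=
\frac{q}{\|\varphi_q\|_q^q}
\left(\intO|\nabla\varphi_q|^{q-2}\nabla\varphi_q\cdot\nabla w\,dx
-\lambda_1(q)\intO \varphi_q^{q-1}w\,dx\right),
\]
so whenever this linear functional of $w$ is nonzero, a suitable sign of $t$ produces $R_q(\varphi_q+tw)<\lambda_1(q)$, hence $\mu_1<\lambda_1(q)$. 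Its non-vanishing for some admissible $w$ is precisely the assertion that $\varphi_q$ fails to solve the eigenvalue equation when tested against all of $W_0^{1,q_-}$; otherwise $\bar\varphi_q$ would be an eigenfunction across the positive-capacity defect of $\partial\Omega$, contradicting $\varphi_q\in W_0^{1,q}$ through the strong maximum principle (recall $\varphi_q>0$ in $\Omega$).

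The main obstacle is upgrading this to \emph{all} $k$. Because the $q$-Laplacian admits no eigenspace decomposition and the minimax runs over symmetric sets, a single descent direction $w$ cannot be added to a genus-$k$ set while preserving oddness, so the $k=1$ computation does not transfer directly. I expect the correct route is to argue that $W_0^{1,q_-}\neq W_0^{1,q}$ forces the defect $W_0^{1,q_-}/W_0^{1,q}$ to be infinite-dimensional, and then to invoke (or reprove) strict monotonicity of the entire sequence of variational eigenvalues under a positive-$q$-capacity enlargement of the admissible class — equivalently the strict left-discontinuity $\lim_{s\uparrow q}\lambda_k(s)<\lambda_k(q)$ discussed in \cite{Lind1,DM,HK,parini}. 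Converting this domain-monotonicity heuristic into an explicit construction of genus-$k$ competitors in $W_0^{1,q_-}$ with $\max_A R_q<\lambda_k(q)$ is the delicate step, and is where I would concentrate the work.
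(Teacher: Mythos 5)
Your reduction step is exactly the paper's: you interpose the intermediate values $\mu_k$ (the paper's $\underline{\lambda}_k(q)$, built on $W_0^{1,q_-}$ with compactness in $W^{1,q}$; the genus ambiguity is immaterial since $W_0^{1,q_-}$ is a closed subspace of $W^{1,q}$ with the induced topology) and obtain the sandwich $\lambda_k(q;r)\le \mu_k\le \lambda_k(q)$ via Lemma~\ref{lem:genus} applied to the inclusion maps. But the heart of the lemma --- the strict inequality $\mu_k<\lambda_k(q)$ for \emph{every} $k$ --- is missing from your proposal, and you say so yourself. The paper closes this in two citations to Degiovanni--Marzocchi \cite{DM}: by \cite[Theorem~4.1~(c),(d)]{DM} the hypothesis \eqref{eq:WWW0} is \emph{equivalent} to $\underline{\lambda}_1(q)<\lambda_1(q)$, and by \cite[Theorem~6.1 and Corollary~6.2]{DM} equality $\underline{\lambda}_k(q)=\lambda_k(q)$ for some $k\ge 2$ holds if and only if it holds for $k=1$; together these propagate strictness to all $k$. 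You correctly located the relevant literature (the left-continuity question for $s\mapsto\lambda_k(s)$ at $s=q$), but locating it is not the same as supplying the argument, and no construction of genus-$k$ competitors is actually needed --- the rigidity result of \cite{DM} does that work.

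Your $k=1$ sketch also has a real flaw in its final step. The dichotomy is sound: if $\mu_1=\lambda_1(q)$ then $\varphi_q$ minimizes over $W_0^{1,q_-}$, so the first variation vanishes for all $w\in W_0^{1,q_-}$. But the contradiction you propose --- that $\bar\varphi_q$ would be ``an eigenfunction across the positive-capacity defect,'' refuted by the strong maximum principle --- does not go through as stated. The defect associated with \eqref{eq:WWW0} is a positive-$q$-capacity but typically Lebesgue-null subset of $\partial\Omega$; $W_0^{1,q_-}$ need not coincide with $W_0^{1,q}(\widetilde\Omega)$ for any open set $\widetilde\Omega\supsetneq\Omega$, so there is no open region ``across'' which to run a maximum principle, and $\bar\varphi_q=\varphi_q$ a.e., so nothing changes at the level of pointwise values. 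What fails when $W_0^{1,q_-}\neq W_0^{1,q}$ is the zero boundary condition in the \emph{capacity} sense, and detecting this requires the fine potential-theoretic analysis of \cite{DM} (building on \cite{HK}), not a maximum principle. Your speculation that the quotient $W_0^{1,q_-}/W_0^{1,q}$ must be infinite-dimensional is likewise neither proved nor needed in the paper's route.
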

	\begin{proof}
		It is not hard to observe
		that $W_0^{1,q_-}$ is a closed vector subspace of $W^{1,q}$ satisfying $W_0^{1,q} \subset W_0^{1,q_-} \subset \Wr$ for any $r<q$, see, e.g., \cite[Proposition~2.1]{DM}.
		Define
		$$
		\underline{\lambda}_k(q)
		=
		\inf
		\left\{
		\max_{u\in A} \frac{\|\nabla u\|_q^q}{\|u\|_q^q}:~ A\in {\Sigma}_k(q_-)
		\right\},
		$$
		where
		$$
		\Sigma_k(q_-) 
		=\left\{\, A\subset W_0^{1,q_-} \setminus \{0\}:~ 
		A\ {\rm is\ symmetric,\ compact\ in}\ W^{1,q},\ 
		{\rm and}\ \gamma(A;W^{1,q})\ge k\,\right\}.
		$$
		Using the continuous embedding $i_1: \Wq \to W^{1,q}$ defined as $i_1(u)=u$, we apply Lemma~\ref{lem:genus} to deduce that $\Sigma_k(q) \subset \Sigma_k(q_-)$. 
		In the same way, Lemma~\ref{lem:genus} with the continuous embedding $i_2: W^{1,q} \to W^{1,r}$ defined as $i_2(u)=u$ yields $\gamma(A;W^{1,r}) \geq \gamma(A;W^{1,q}) \geq k$ for any $A \in \Sigma_k(q_-)$.
		Moreover, any such $A$ is symmetric and compact in $\Wr$.
		Since the topologies induced by the norms of $W^{1,r}$ and $\Wr$ are equivalent in $\Wr$, we conclude from Lemma~\ref{lem:genus} that $\gamma(A;W^{1,r})=\gamma(A;\Wr)$ for any $A \in \Sigma_k(q_-)$.
		Therefore, $\Sigma_k(q_-) \subset \Sigma_k(r)$.
		Consequently, 
		\begin{equation}\label{eq:lalala1}
			\lambda_k(q;r) 
			\leq 
			\underline{\lambda}_k(q) 
			\leq 
			\lambda_k(q).
		\end{equation}
		It follows from \cite[Theorem~6.1 and Corollary~6.2]{DM} that $\underline{\lambda}_k(q) 
		= \lambda_k(q)$ for $k \geq 2$ if and only if $\underline{\lambda}_1(q) 
		= \lambda_1(q)$.
		However, our assumption \eqref{eq:WWW0}  
		is equivalent to 
		$\underline{\lambda}_1(q) 
		< \lambda_1(q)$, see \cite[Theorem~4.1~(c),~(d)]{DM}.
		Thus, we conclude from \eqref{eq:lalala1} that $\lambda_k(q;r) \leq 
		\underline{\lambda}_k(q)  < \lambda_k(q)$ for any $k \in \mathbb{N}$.
	\end{proof}
	
	\begin{remark}
		A domain $\Omega$ satisfying the assumption \eqref{eq:WWW0}, or, equivalently, 	
		$\lim_{s \nearrow q} \lambda_1(s) < \lambda_1(q)$, was first constructed in \cite[Section~7]{Lind1}. 
		We refer to \cite{DM,HK} for further developments and for characterizations of the family of such domains. 
	\end{remark}

	\begin{lemma}\label{lem:la=la2}
		Let $N \geq 1$ and $1<r<q$.
		Let $\Omega \subset \mathbb{R}^N$ be a bounded domain such that
		\begin{equation}\label{eq:WWW}
			W^{1,q} \cap \Wr = \Wq.
		\end{equation}
		Then
		${\lambda}_k(q) = \lambda_k(q;r)$ for all $k \in \mathbb{N}$.
	\end{lemma}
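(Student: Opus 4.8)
The plan is to combine the chain of inequalities already obtained in the proof of the preceding lemma with the characterization of the regularity condition \eqref{eq:WWW} in terms of variational eigenvalues. First I observe that by Remark~\ref{rem:append}~\ref{rem:append:2} the inequality $\lambda_k(q;r)\le\lambda_k(q)$ holds for every $k$ and every $1<r<q$, with no hypothesis on $\Omega$; hence, under \eqref{eq:WWW}, only the reverse inequality $\lambda_k(q;r)\ge\lambda_k(q)$ needs to be established.

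The first step is to use \eqref{eq:WWW} to locate the admissible sets and to connect the statement to the intermediate eigenvalue $\underline{\lambda}_k(q)$ introduced in the proof of the preceding lemma. In general one has $\Wq\subseteq W_0^{1,q_-}\subseteq W^{1,q}\cap\Wr$ (the first inclusion because $\Wq\subset W_0^{1,s}$ for every $s<q$, the second by taking $s=r$), so \eqref{eq:WWW} forces $W_0^{1,q_-}=\Wq=W^{1,q}\cap\Wr$. Consequently any $A\in\Sigma_k(r)$ with $A\subset W^{1,q}$ automatically lies in $\Wq=W_0^{1,q_-}$, and moreover \eqref{eq:WWW} is exactly the negation of \eqref{eq:WWW0}. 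By \cite[Theorem~4.1~(c),(d)]{DM} this is equivalent to $\underline{\lambda}_1(q)=\lambda_1(q)$, whence \cite[Theorem~6.1 and Corollary~6.2]{DM} give $\underline{\lambda}_k(q)=\lambda_k(q)$ for all $k$.

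Recalling now the general chain \eqref{eq:lalala1}, namely $\lambda_k(q;r)\le\underline{\lambda}_k(q)\le\lambda_k(q)$, the identity $\underline{\lambda}_k(q)=\lambda_k(q)$ reduces the whole lemma to the single inequality $\lambda_k(q;r)\ge\underline{\lambda}_k(q)$. To attack it I would fix an arbitrary $A\in\Sigma_k(r)$ with $A\subset W^{1,q}$ and $\sup_{u\in A}\|\nabla u\|_q^q/\|u\|_q^q<+\infty$ (sets with infinite supremum do not affect the infimum), normalize it on the sphere $\{\|u\|_q=1\}$ so that it becomes bounded in $\Wq$, and aim to show $\sup_{u\in A}\|\nabla u\|_q^q\ge\underline{\lambda}_k(q)$; in other words, that relaxing the compactness requirement from $W^{1,q}$ to the weaker space $\Wr$, together with the corresponding genus, does not lower the minimax value.

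The hard part will be precisely this last inequality, and the obstruction is genuine. A set that is compact in $\Wr$ and bounded in $\Wq$ need not be compact in $\Wq$; worse, since the $\Wq$-topology is finer, passing to it can only decrease the Krasnoselskii genus (by Lemma~\ref{lem:genus} applied to the inclusion $\Wq\hookrightarrow\Wr$), so such an $A$ is in general not admissible for $\underline{\lambda}_k(q)$ and cannot be reused verbatim. This is exactly why the finite-dimensional smoothing argument of Lemma~\ref{lem:la=la} does not transfer: there the set was compact in the \emph{stronger} space, which supplied the uniform control needed to estimate the $q$-Rayleigh quotient of its projection, whereas here a finite $\Wr$-cover controls only $\Wr$-distances and leaves the $q$-Rayleigh quotient of the projected set unbounded. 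The resolution I would therefore pursue is to bypass reusing $A$ and instead invoke the stability of the variational eigenvalues with respect to the constraint space under \eqref{eq:WWW} --- that is, the left-continuity of $q\mapsto\lambda_k(q)$ from \cite{DM} (cf.\ also \cite{HK,parini}), which is designed to bridge exactly this compactness/genus gap and yields $\lambda_k(q;r)=\underline{\lambda}_k(q)$. Combined with $\underline{\lambda}_k(q)=\lambda_k(q)$, this closes the argument.
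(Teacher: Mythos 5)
You correctly reduce the lemma to the inequality $\lambda_k(q)\le\lambda_k(q;r)$, and your diagnosis of the obstruction is accurate: a set that is compact in $\Wr$ and bounded in $\Wq$ need not be compact in $\Wq$, and passing to the finer topology can only lower the genus, so an element of $\Sigma_k(r)$ is in general not admissible for $\lambda_k(q)$ or $\underline{\lambda}_k(q)$. Your preliminary step is also essentially sound: \eqref{eq:WWW} implies $W_0^{1,q_-}=\Wq$ (via $\Wq\subseteq W_0^{1,q_-}\subseteq W^{1,q}\cap\Wr$), and then \cite[Theorem~4.1 and Theorem~6.1, Corollary~6.2]{DM} give $\underline{\lambda}_k(q)=\lambda_k(q)$ for all $k$ --- though note that \eqref{eq:WWW} is a priori only \emph{sufficient} for the negation of \eqref{eq:WWW0}, not "exactly" equivalent to it; fortunately you only use that one direction.

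The genuine gap is the final step: the inequality $\lambda_k(q;r)\ge\underline{\lambda}_k(q)$, which is the entire content of the lemma, is never proved. Invoking "the left-continuity of $q\mapsto\lambda_k(q)$" does not supply it, because $\lambda_k(q;r)$ is not $\lambda_k(s)$ for any $s<q$: its Rayleigh quotient is taken at the fixed exponent $q$, and only the constraint class (compactness and genus) lives in $\Wr$. No statement in \cite{DM,HK,parini} yields your claimed identity $\lambda_k(q;r)=\underline{\lambda}_k(q)$; indeed the paper itself never asserts it, and its proof of this lemma does not pass through $\underline{\lambda}_k(q)$ at all. The mechanism the paper actually uses --- and which is missing from your sketch --- is to replace the standard minimax definition by the Champion--De Pascale characterization \eqref{eq:lambdak2} of $\lambda_k(q)$ from \cite{CdP}, in which admissible sets need only be symmetric, \emph{closed and bounded} in $\Wq$, with the genus computed in the \emph{coarser} topology of $L^q$. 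A near-optimal $A_\varepsilon\in\Sigma_k(r)$ is then shown to be admissible for \eqref{eq:lambdak2}: it lies in $\Wq\setminus\{0\}$ by \eqref{eq:WWW}; it is bounded in $\Wq$ because the bound on its $q$-Rayleigh quotient combines with \cite[Lemma~9]{T-2014} and the Poincar\'e inequality to give $\|\nabla v\|_q\le C\|v\|_r\le C'\max_{u\in A_\varepsilon}\|\nabla u\|_r$; it is closed in $\Wq$ because $\Wq$-convergence implies $\Wr$-convergence and $A_\varepsilon$ is $\Wr$-compact; and $\gamma(A_\varepsilon;L^q)\ge\gamma(A_\varepsilon;\Wr)\ge k$ by Lemma~\ref{lem:genus}, since the inclusion $A_\varepsilon\to L^q$ is continuous (boundedness in $\Wq$ plus the compact embedding). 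Coarsening the genus to $L^q$ makes the monotonicity of Lemma~\ref{lem:genus} work in your favor, exactly where refining to $\Wq$ made it work against you; without this (or an equivalent) device, your proposal does not close.
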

	\begin{proof}
		Let us take any $\varepsilon>0$ and any $A_\varepsilon \in \Sigma_k(r)$ such that $A_\varepsilon \subset W^{1,q}$ and
		\begin{equation}\label{eq:upper-bound0x}
			\sup_{u\in A_\varepsilon} \frac{\|\nabla u\|_q^q}{\|u\|_q^q}
			\leq
			\lambda_k(q;r) + \varepsilon.
		\end{equation}
		It is known from \cite{CdP} (see, more precisely, \cite[Eq.~(2.3) and Corollary~3.6]{CdP}) that $\lambda_k(q)$ can be equivalently defined as
		\begin{equation}\label{eq:lambdak2}
			\lambda_k(q) 
			=
			\min
			\left\{
			\sup_{u\in A} \frac{\|\nabla u\|_q^q}{\|u\|_q^q}:~ A\in \mathcal{G}_k(q)
			\right\},
		\end{equation}
		where 
		\begin{align*}
			&\mathcal{G}_k(q) 
			\\
			&=\left\{\, A\subset W_0^{1,q} \setminus \{0\}:~
			A\ {\rm is\ symmetric,\ closed \ and \ bounded\ in}\ W^{1,q}_0,\ 
			{\rm and}\ \gamma(A; L^q)\ge k\,\right\}.
		\end{align*}
		Our aim is to show that $A_\varepsilon \in \mathcal{G}_k(q)$, i.e., $A_\varepsilon$ is admissible for the definition \eqref{eq:lambdak2} of $\lambda_k(q)$. 
		\textit{If it is true}, then $\lambda_k(q) \leq \lambda_k(q;r)+\varepsilon$ in view of \eqref{eq:upper-bound0x}. 
		Since $\varepsilon>0$ is chosen arbitrarily, we get $\lambda_k(q) \leq \lambda_k(q;r)$.
		On the other hand, we have
		$\lambda_k(q;r) \leq {\lambda}_k(q)$ by Remark~\ref{rem:append}~\ref{rem:append:2}, which gives the desired conclusion of the lemma.
		(If we suppose that for any $\varepsilon>0$ there exists $A_\varepsilon$ which is \textit{compact} in $\Wq$, then the above argument applies to the original definition~\eqref{eq:lambdak} of $\lambda_k(q)$ instead of \eqref{eq:lambdak2}, which would give the result in a simpler way.)
		
		Therefore, let us justify that $A_\varepsilon \in \mathcal{G}_k(q)$.
		By the assumption \eqref{eq:WWW}, we have  $A_\varepsilon \subset \Wq$.
		Since $r<q$ and $A_\varepsilon \subset \Wr \setminus \{0\}$ is compact in $\Wr$, we get $A_\varepsilon \subset \Wq \setminus \{0\}$.
		The set $A_\varepsilon$ is bounded in $\Wq$.
		Indeed, we have from  \eqref{eq:upper-bound0x} that
		\begin{equation}\label{eq:nablavbound}
			\|\nabla v\|_q^q 
			\leq 
			(\lambda_k(q;r) + \varepsilon) \|v\|_q^q
			\quad \text{for any}~ v \in A_\varepsilon,
		\end{equation} 
		and hence \cite[Lemma~9]{T-2014} and the Poincar\'e inequality give the existence of a constant $C>0$ such that
		\begin{equation}\label{eq:nablavbound2}
			\|\nabla v\|_q 
			\leq 
			C \|v\|_r
			\leq
			C \lambda_1^{-1/r}(r)
			\|\nabla v\|_r 
			\leq 
			C \lambda_1^{-1/r}(r) \,
			\max_{u \in A} \|\nabla u\|_r <+\infty
			\quad \text{for any}~ v \in A_\varepsilon.
		\end{equation} 
		Moreover, $A_\varepsilon$ is closed in $\Wq$. 
		To show this, let us take any sequence $\{u_n\} \subset A_\varepsilon$ which strongly converges in $\Wq$ to a function $u_0 \in \Wq$. 
		By the compactness of $A_\varepsilon$ in $\Wr$ and the assumption $r<q$, we see that $u_0 \in \Wr \setminus \{0\}$ and $u_n \to u_0$ strongly in $\Wr$, and so $u_0 \in A_\varepsilon$, which gives the desired closedness of $A_\varepsilon$ in $\Wq$. 
		
		It remains to prove that $\gamma(A_\varepsilon; L^q)\ge k$.
		Consider the mapping $i: A_\varepsilon \subset \Wr \to L^q$ defined as $i(u)=u$. 
		This mapping is continuous. (This claim is trivial when $q$ does not exceed the critical Sobolev exponent $r^*$.)
		Indeed, let us take any sequence $\{u_n\} \subset A_\varepsilon$ which converges to some $u_0 \in A_\varepsilon$ strongly in $\Wr$.
		Since $A_\varepsilon$ is bounded in $\Wq$, we see that any subsequence of $\{u_n\}$ has a subsubsequence which converges to $u_0$ weakly in $\Wq$ and hence strongly in $L^q$. Therefore, the whole sequence $\{u_n\}$ also converges to $u_0$ strongly in $L^q$, and hence $i$ is continuous.
		Since $i$ is odd, we apply Lemma~\ref{lem:genus} to get $\gamma(A_\varepsilon; L^q)\ge \gamma(A_\varepsilon; \Wr) \geq k$.
		
		Thus, we conclude that $A_\varepsilon \in \mathcal{G}_k(q)$, which finishes the proof.
	\end{proof}

	\begin{remark}
		We refer to \cite[Sections~3 and~4]{HK} for several assumptions equivalent to/sufficient for \eqref{eq:WWW}. 
		In particular, if $\Omega$ is Lipschitz (in the case $N \geq 2$), then \eqref{eq:WWW} holds, see \cite[Remarks~(ii), p.~252]{HK}.
		Moreover, it is clear that \eqref{eq:WWW} is satisfied when $N=1$.
	\end{remark}

	\section{Relation between \texorpdfstring{$\lambda_k(q)$}{lambda-k(q)} and \texorpdfstring{$\beta_*$}{beta*}}\label{sec:appendixB}
	
	The aim of this section is to show that, for an appropriate domain $\Omega \subset \mathbb{R}^N$ with $N \geq 2$, the assumptions \ref{thm1:2}, \ref{thm1:2x}, \ref{thm1:3} of Theorem~\ref{thm1} can be satisfied for $k \geq 2$.
	Throughout this section, 
	in order to represent the dependence of quantities on a domain $\Omega$, 
	we will use the expanded notation $\beta_*(\Omega)$, $\beta_*(\alpha;\Omega)$, $\lambda_k(q;\Omega)$ for $\beta_*$, $\beta_*(\alpha)$, $\lambda_k(q)$, respectively.
	In particular, recall the definition of $\beta_*(\Omega)$, see \eqref{def:values}:
	\begin{equation*}
		\beta_*(\Omega)
		=
		\frac{\|\nabla \varphi_p\|_q^q}{\|\varphi_p\|_q^q}.
	\end{equation*}
	
	\begin{lemma}\label{lem:beta*>lambdak}
		Let $1<q<p$, $N \geq 2$, and $k \geq 2$.
		Then there exists a bounded $C^2$-smooth domain $\Omega \subset \mathbb{R}^N$ such that $\lambda_k(q;\Omega) < \beta_*(\Omega)$.
	\end{lemma}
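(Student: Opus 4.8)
The plan is to realize $\Omega$ as a degenerating multi-bump (``dumbbell'') domain. Fix a bounded $C^2$ domain $\Omega_0\subset\mathbb{R}^N$ (a ball, say), place $k$ pairwise-disjoint translates $\Omega_0^{(1)},\dots,\Omega_0^{(k)}$ inside a fixed large ball $D$, and for $\varepsilon>0$ join consecutive copies by thin $C^2$ tubes of width $\varepsilon$, attached \emph{from outside} the copies and rounded at the junctions, so as to obtain a bounded connected $C^2$ domain $\Omega_\varepsilon$ with $\bigsqcup_{j}\Omega_0^{(j)}\subset\Omega_\varepsilon\subset D$ and $|\Omega_\varepsilon\setminus\bigsqcup_j\Omega_0^{(j)}|\to 0$ as $\varepsilon\to 0$. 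The goal is to prove the two matching estimates $\lambda_k(q;\Omega_\varepsilon)\le\lambda_1(q;\Omega_0)$ (uniformly in $\varepsilon$) and $\beta_*(\Omega_\varepsilon)\to\beta_*(\Omega_0)$. Since $\beta_*(\Omega_0)>\lambda_1(q;\Omega_0)$ by the known strict inequality $\beta_*>\lambda_1(q)$ (recalled after \eqref{def:values}), a sufficiently small $\varepsilon$ then yields $\lambda_k(q;\Omega_\varepsilon)<\beta_*(\Omega_\varepsilon)$, and we take $\Omega=\Omega_\varepsilon$.

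For the upper bound on $\lambda_k$, let $\psi_j$ be the first $q$-eigenfunction of $\Omega_0^{(j)}$ extended by zero; these lie in $\Wq(\Omega_\varepsilon)$ and have pairwise disjoint supports. On $V=\mathrm{span}\{\psi_1,\dots,\psi_k\}$ the disjointness gives $\|\nabla u\|_q^q=\lambda_1(q;\Omega_0)\,\|u\|_q^q$ for every $u\in V\setminus\{0\}$. The $L^q$-unit sphere $A$ of $V$ is symmetric, compact in $\Wq(\Omega_\varepsilon)$, and oddly homeomorphic to $S^{k-1}$, so $\gamma(A)=k$ and $A\in\Sigma_k(q)$; hence $\lambda_k(q;\Omega_\varepsilon)\le\max_{u\in A}\|\nabla u\|_q^q/\|u\|_q^q=\lambda_1(q;\Omega_0)$ for every $\varepsilon$.

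The delicate part is the convergence of $\beta_*$, and here the main obstacle lies: analyzing the spectral degeneration of the first $p$-eigenfunction on the collapsing-neck domains and excluding any loss of mass or energy in the tubes. Normalize the positive first $p$-eigenfunction $\varphi_p^\varepsilon$ of $\Omega_\varepsilon$ by $\|\varphi_p^\varepsilon\|_p=1$ and extend it by zero to $D$. Then $\|\nabla\varphi_p^\varepsilon\|_p^p=\lambda_1(p;\Omega_\varepsilon)\le\lambda_1(p;\Omega_0)$, so along a subsequence $\varphi_p^\varepsilon\rightharpoonup\varphi_*$ weakly in $\W(D)$ and strongly in $L^p(D)$; the limit $\varphi_*\ge 0$ has unit $L^p$-norm and vanishes a.e.\ outside $\bigcup_j\overline{\Omega_0^{(j)}}$ because the tubes shrink to a null set. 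By a standard restriction property, $\varphi_*|_{\Omega_0^{(j)}}\in\W(\Omega_0^{(j)})$, whence $\|\nabla\varphi_*\|_p^p\ge\lambda_1(p;\bigsqcup_j\Omega_0^{(j)})=\lambda_1(p;\Omega_0)$. Comparing this lower bound with weak lower semicontinuity and the upper bound on $\lambda_1(p;\Omega_\varepsilon)$ forces $\lambda_1(p;\Omega_\varepsilon)\to\lambda_1(p;\Omega_0)=\|\nabla\varphi_*\|_p^p=\lim\|\nabla\varphi_p^\varepsilon\|_p^p$. This gradient-norm convergence, together with the weak convergence in the uniformly convex space $L^p(D)$, upgrades to \emph{strong} convergence $\nabla\varphi_p^\varepsilon\to\nabla\varphi_*$ in $L^p(D)$. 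I would stress that it is precisely this norm-from-below trick that avoids the usual localized $(S_+)$ argument and makes the analysis tractable. Passing to the limit in the weak formulation with test functions in $C_0^\infty(\Omega_0^{(j)})$ then identifies $\varphi_*$ on each copy as a nonnegative first $p$-eigenfunction, so $\varphi_*|_{\Omega_0^{(j)}}=c_j\,\varphi_{p,0}^{(j)}$ with $c_j\ge 0$ not all zero, where $\varphi_{p,0}^{(j)}$ is the first $p$-eigenfunction of $\Omega_0$ placed on the $j$-th copy.

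Finally, strong $L^p$-convergence of gradients gives, since $q<p$ and $D$ is bounded, both $\|\nabla\varphi_p^\varepsilon\|_q^q\to\|\nabla\varphi_*\|_q^q$ and $\|\varphi_p^\varepsilon\|_q^q\to\|\varphi_*\|_q^q$. As $\varphi_*$ vanishes on the (degenerate) tubes and equals $c_j\varphi_{p,0}^{(j)}$ on the exact translates, the $q$-Rayleigh quotient of $\varphi_*$ equals $\big(\sum_j|c_j|^q\|\nabla\varphi_{p,0}\|_q^q\big)\big/\big(\sum_j|c_j|^q\|\varphi_{p,0}\|_q^q\big)=\beta_*(\Omega_0)$, so $\beta_*(\Omega_\varepsilon)\to\beta_*(\Omega_0)$ (the limit being independent of the subsequence, the whole family converges). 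Since $\beta_*(\Omega_0)>\lambda_1(q;\Omega_0)$, for all small enough $\varepsilon$ we obtain $\lambda_k(q;\Omega_\varepsilon)\le\lambda_1(q;\Omega_0)<\beta_*(\Omega_\varepsilon)$, which completes the construction.
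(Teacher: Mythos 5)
Your proof is correct, and while the skeleton (a beads/dumbbell domain $\Omega_\varepsilon$, plus the genus-$k$ sphere spanned by zero-extended first $q$-eigenfunctions with disjoint supports, giving $\lambda_k(q;\Omega_\varepsilon)\le\lambda_1(q;\Omega_0)$) coincides with the paper's, your treatment of the key step is genuinely different. The paper never analyzes the first $p$-eigenfunction of $\Omega_\varepsilon$ at all: it freezes the parameter at $\alpha=\lambda_1(p;B_r)$, uses the monotonicity of $\alpha\mapsto\beta_*(\alpha;\Omega_\varepsilon)$ together with $\lambda_1(p;\Omega_\varepsilon)<\lambda_1(p;B_r)$ to get the one-sided bound $\beta_*(\lambda_1(p;B_r);\Omega_\varepsilon)\le\beta_*(\Omega_\varepsilon)$, and then proves $\beta_*(\lambda_1(p;B_r);\Omega_\varepsilon)\to\beta_*(\Omega_0)$ by a contradiction argument on near-minimizers of the constrained infimum \eqref{def:beta_*}, where weak convergence and lower semicontinuity suffice and no limit function is ever identified as an eigenfunction. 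You instead prove full spectral stability: $\lambda_1(p;\Omega_\varepsilon)\to\lambda_1(p;\Omega_0)$ by the squeeze between weak lower semicontinuity and domain monotonicity, strong $W^{1,p}$ convergence of $\varphi_p^\varepsilon$ via Radon--Riesz, identification of the limit as a disjoint combination $\sum_j c_j\varphi_{p,0}^{(j)}$, and conversion to $q$-quotients by H\"older since $q<p$. This buys more than the lemma needs --- the two-sided limit $\beta_*(\Omega_\varepsilon)\to\beta_*(\Omega_0)$, i.e., actual continuity of $\beta_*$ along the degeneration --- at the cost of heavier machinery; note in fact that your own squeeze already gives $\|\nabla\varphi_*\|_p^p=\lambda_1(p;\Omega_0)$ and hence the identification of $\varphi_*$ \emph{without} Radon--Riesz, after which weak lower semicontinuity of $\|\nabla\cdot\|_q$ yields the only inequality actually needed, $\liminf_{\varepsilon\searrow0}\beta_*(\Omega_\varepsilon)\ge\beta_*(\Omega_0)$. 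Two small points to tighten: the claim that $\varphi_*$ vanishes a.e.\ off the balls does not follow from $|\Omega_\varepsilon\setminus\bigsqcup_j\Omega_0^{(j)}|\to0$ and a.e.\ convergence alone --- either arrange the tubes monotone in $\varepsilon$ (as the paper does, so that points off the limiting segments eventually leave $\Omega_\varepsilon$) or use the uniform $L^{p^*}$ bound and H\"older to kill the mass in the necks; and the restriction property $\varphi_*|_{\Omega_0^{(j)}}\in W_0^{1,p}(\Omega_0^{(j)})$ deserves a citation (the paper uses \cite[Corollary~3.3]{tiba}), which is legitimate here since the copies are smooth.
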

	\begin{proof}
		Let us construct a beads-type domain $\Omega_\varepsilon$ as follows.
		Take $k$ points in $\mathbb{R}^N$ lying on the $x_1$-axis:
		\begin{equation}\label{eq:xj}
			z_j = (j, 0, \dots, 0), \quad j = 1,\dots,k.
		\end{equation}
		Denote by $\Omega_0$ the union over $j$ of open balls $B_r(z_j)$ of radius $r \in (0,1/2)$ centered at $z_j$.
		Thanks to the choice of $r$, these balls are \textit{disjoint}.
		Let us now consider a \textit{domain} $\Omega_\varepsilon$ as the union of $\Omega_0$ with a set of $k-1$ thin cylindrical type channels $T_{j,\varepsilon}$ of maximal width $\varepsilon>0$ subsequently connecting the balls $B_r(z_j)$ and $B_r(z_{j+1})$ in a $C^2$-smooth way, so that $\Omega_\varepsilon$ is of class $C^2$
		and $\Omega_{\varepsilon_1} \supset \Omega_{\varepsilon_2}$ provided $\varepsilon_1 > \varepsilon_2>0$, see Figure~\ref{fig:beads}. 
		For instance, one can take
		$$
		T_{j,\varepsilon}
		=
		\{
		(x_1,\dots,x_N) \in \mathbb{R}^N:~ 
		j+r-\varepsilon < x_1 < j+1-r+\varepsilon,~
		x_2^2+\dots+x_{N}^2 < g_\varepsilon^2(x_1)
		\},
		$$
		where $\{g_\varepsilon\}$ is an appropriate family of smooth positive functions in the interval $[1,k+1]$
		satisfying $\max_{s \in [1,k+1]}g_\varepsilon(s) \to 0$ as $\varepsilon \to 0$, and $g_{\varepsilon_1} > g_{\varepsilon_2}$ provided $\varepsilon_1 > \varepsilon_2>0$, so that $T_{j,\varepsilon_1} \supset T_{j,\varepsilon_2}$. 
		
		\begin{figure}[!ht]
			\center{
				\includegraphics[scale=0.65]{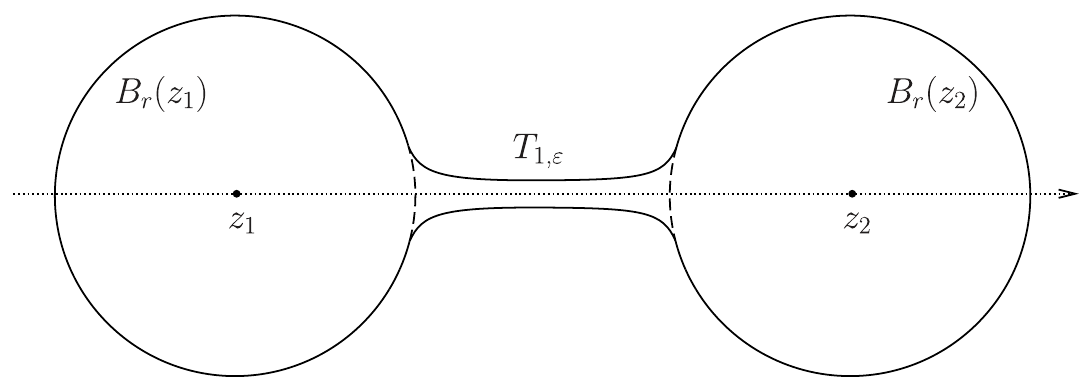}
			}
			\caption{A possible shape of $\Omega_\varepsilon$ for $k=2$ and some $\varepsilon>0$.}
			\label{fig:beads}
		\end{figure}

		Evidently, we have $\lambda_1(q;B_r(z_i)) = \lambda_1(q;B_r(z_j))$ for any $i,j \in \{1,\dots,k\}$, and when there is no ambiguity, we denote, for short,
		$$
		\lambda_1(q;B_r)
		=
		\lambda_1(q;B_r(z_j)).
		$$
		
		Our aim is to prove that $\lambda_k(q; \Omega_\varepsilon) < \beta_*(\Omega_\varepsilon)$ for any sufficiently small $\varepsilon>0$.
		First, let us show that 
		\begin{align}\label{eq:lambda1kx}
			\lambda_1(q;\Omega_\varepsilon) 
			\leq 
			\lambda_2(q;\Omega_\varepsilon) \leq
			\dots
			\leq
			\lambda_k(q;\Omega_\varepsilon) 
			\leq 
			\lambda_1(q;B_r).
		\end{align}
		For this purpose, denote by $\varphi_{q,j} \in \Wq(B_r(z_j)) \cap C^1(\overline{B_r(z_j)})$ the first eigenfunction corresponding to $\lambda_1(q;B_r(z_j))$. 
		Extending $\varphi_{q,j}$ by zero outside of $B_r(z_j)$, we have $\varphi_{q,j} \in \Wq(\Omega_\varepsilon)$.
		Let us consider the following subset of $\Wq(\Omega_\varepsilon)$:
		$$
		A = \{a_1 \varphi_{q,1} + \ldots + a_k \varphi_{q,k}:~ 
		a_1,\dots,a_k \in \mathbb{R},~
		|a_1|^q + \ldots + |a_k|^q = 1\}.
		$$
		It is clear that $A$ is symmetric and compact. Moreover, the mapping 
		$$
		a_1 \varphi_{q,1} + \ldots + a_k \varphi_{q,k} \mapsto (a_1,\dots,a_k)
		$$ 
		is continuous and odd, and hence $\gamma(A;\Wq) = k$, see \cite[Proposition~7.7]{rabinowitz}.
		Therefore, $A$ is an admissible set for the definition \eqref{eq:lambdak} of $\lambda_k(q;\Omega_\varepsilon)$.
		Since $\varphi_{q,i}$ and $\varphi_{q,j}$ have disjoint supports and equal norms when $i \neq j$, and 
		$$
		\intOe |\nabla  \varphi_{q,j}|^q \,dx
		=
		\lambda_1(q;B_r)
		\intOe |\varphi_{q,j}|^q \,dx,
		\quad j =1,\dots,k,
		$$ 
		we have
		\begin{align*}
			\lambda_k(q;\Omega_\varepsilon)
			&\leq
			\max_{u \in A} \frac{\intOe |\nabla u|^q \,dx}{\intOe |u|^q \,dx}
			\\
			&=
			\max \left\{ \frac{\intOe |\nabla (a_1 \varphi_{q,1} + \ldots + a_k \varphi_{q,k})|^q \,dx}{\intOe |a_1 \varphi_{q,1} + \ldots + a_k \varphi_{q,k}|^q \,dx}
			: |a_1|^q + \ldots + |a_k|^q = 1
			\right\}
			\\
			&=
			\max \left\{ 
			\frac{|a_1|^q \intOe |\nabla  \varphi_{q,1}|^q \,dx + \ldots + |a_k|^q \intOe |\nabla  \varphi_{q,k}|^q \,dx}{|a_1|^q \intOe |\varphi_{q,1}|^q \,dx + \ldots + |a_k|^q \intOe |\varphi_{q,k}|^q \,dx}
			: |a_1|^q + \ldots + |a_k|^q = 1
			\right\}\\
			&=
			\lambda_1(q;B_r),
		\end{align*}
		which establishes the chain of inequalities \eqref{eq:lambda1kx}.
		Exactly the same construction but in the space $\W(\Omega_0)$ instead of $\Wq(\Omega_\varepsilon)$ shows that
		\begin{equation}\label{eq:lambda1kB}
			\lambda_1(p;\Omega_0) 
			=
			\lambda_2(p;\Omega_0) 
			=
			\dots
			=
			\lambda_k(p;\Omega_0) 
			= 
			\lambda_1(p;B_r).
		\end{equation}
		
		\medskip
		As it can be seen from \eqref{eq:lambda1kx}, 
		in order to prove the lemma, it is sufficient to justify that
		\begin{equation}\label{eq:lambda1beta2}
			\lambda_1(q;B_r) < \beta_*(\Omega_\varepsilon)
			\quad \text{for any sufficiently small}~ \varepsilon>0.
		\end{equation}
		Hereinafter, we assume that $j \in \{1,\dots,k\}$ is fixed and we denote $B_r := B_r(z_j)$, for brevity.
		We will prove \eqref{eq:lambda1beta2} by noting that $\lambda_1(q;B_r) < \beta_*(B_r)$ (see Section~\ref{sec:notations}) and 
		showing that
		$$
		\beta_*(B_r)
		=
		\beta_*(\Omega_0)
		\leq 
		\beta_*(\Omega_\varepsilon) - o(1).
		$$
		Let us recall from Section~\ref{sec:notations} that $\beta_*(\Omega_\varepsilon)$ for any $\varepsilon \geq 0$ can be characterized as
		\begin{align}
			\notag
			\beta_*(\Omega_\varepsilon&)
			=
			\beta_*(\lambda_1(p;\Omega_\varepsilon); \Omega_\varepsilon)
			\\
			\label{def:beta*epsilon}
			&=
			\inf
			\left\{
			\frac{\intOe |\nabla u|^q \,dx}{\intOe |u|^q \,dx}: ~
			u \in \W(\Omega_\varepsilon) \setminus \{0\},~	
			\intOe |\nabla u|^p \,dx - \lambda_1(p;\Omega_\varepsilon) \intOe |u|^p \,dx \leq 0
			\right\}.
		\end{align}
		In particular, taking the first eigenfunction $\varphi_{p,j}$ corresponding to $\lambda_1(p;B_r)$ as a test function for \eqref{def:beta*epsilon} with $\varepsilon=0$ and using \eqref{eq:lambda1kB}, it is not hard to deduce that $\beta_*(\Omega_0) = \beta_*(B_r)$.
		
		Recall from Section~\ref{sec:notations} that the mapping $\alpha \mapsto \beta_*(\alpha;\Omega_\varepsilon)$ does not increase. 
		Therefore, noting that $\lambda_1(p;B_r) > \lambda_1(p;\Omega_\varepsilon)$ since $B_r \subsetneq \Omega_\varepsilon$, we get
		\begin{equation}\label{eq:betastart1}
			\beta_*(\lambda_1(p;B_r); \Omega_\varepsilon)
			\leq
			\beta_*(\lambda_1(p;\Omega_\varepsilon); \Omega_\varepsilon)
			\equiv
			\beta_*(\Omega_\varepsilon).
		\end{equation}
		Let us investigate the behavior of $\beta_*(\lambda_1(p;B_r); \Omega_\varepsilon)$ with respect to $\varepsilon$.
		The first eigenfunction $\varphi_{p,j}$ corresponding to $\lambda_1(p;B_r)$, being extended by zero outside of $B_r$ so that $\varphi_{p,j} \in \W(\Omega_\varepsilon)$, is an admissible test function for the definition \eqref{def:beta_*} of $\beta_*(\lambda_1(p;B_r); \Omega_\varepsilon)$, and hence
		\begin{equation}\label{eq:testbeta}
			\beta_*(\lambda_1(p;B_r); \Omega_\varepsilon)
			\leq 
			\frac{\intOe |\nabla \varphi_{p,j}|^q \,dx}{\intOe |\varphi_{p,j}|^q \,dx}
			=
			\frac{\int_{B_r} |\nabla \varphi_{p,j}|^q \,dx}{\int_{B_r} |\varphi_{p,j}|^q \,dx}
			=
			\beta_*(B_r) = \beta_*(\Omega_0).
		\end{equation}
		We want to show that
		\begin{equation}\label{eq:betastart2}
			\lim_{\varepsilon \searrow 0}
			\beta_*(\lambda_1(p;B_r); \Omega_\varepsilon)
			=
			\beta_*(\Omega_0).
		\end{equation}
		Suppose, by contradiction, that 
		$$
		\liminf_{\varepsilon \searrow 0}
		\beta_*(\lambda_1(p;B_r); \Omega_\varepsilon)
		< 
		\beta_*(\Omega_0).
		$$	
		Consequently, there exists a sequence $\{u_\varepsilon\}$ such that $u_\varepsilon \in \W(\Omega_\varepsilon)$,
		$\intOe |u_\varepsilon|^q \,dx = 1$,	
		\begin{equation}\label{eq:hles01}
			\intOe |\nabla u_\varepsilon|^p \,dx - \lambda_1(p;B_r) \intOe |u_\varepsilon|^p \,dx \leq 0,
		\end{equation}
		and
		\begin{equation}\label{eq:uepsbound1}
			\liminf_{\varepsilon \searrow 0}
			\intOe |\nabla u_\varepsilon|^q \,dx
			< 
			\beta_*(\Omega_0).
		\end{equation}
		
		By the construction, we have $\Omega_{\varepsilon_2} \subset \Omega_{\varepsilon_1}$ provided $0<\varepsilon_2 < \varepsilon_1$, which yields
		$\W(\Omega_{\varepsilon_2}) \subset \W(\Omega_{\varepsilon_1})$. 
		Therefore, extending $u_\varepsilon$ by zero outside of $\Omega_\varepsilon$, we get
		$u_\varepsilon \in  \W(\Omega_{\varepsilon_1})$ for a fixed $\varepsilon_1>0$ and all $\varepsilon \in (0,\varepsilon_1)$.
		In view of \eqref{eq:uepsbound1}, $\{u_\varepsilon\}$ is bounded in $\W(\Omega_{\varepsilon_1})$ and hence there exists $u_0 \in \W(\Omega_{\varepsilon_1})$ such that $u_\varepsilon \to u_0$ weakly in $\W(\Omega_{\varepsilon_1})$ and $\Wq(\Omega_{\varepsilon_1})$, strongly in $L^p(\Omega_{\varepsilon_1})$ and $L^q(\Omega_{\varepsilon_1})$, and a.e.\ in $\mathbb{R}^N$, up to a subsequence.
		
		By the construction, we have $u_0 = 0$ a.e.\ in $\mathbb{R}^N \setminus \overline{\Omega_0}$.
		Indeed, if we suppose that $u_0 \neq 0$ in some set $\mathcal{O} \subset \mathbb{R}^N \setminus \overline{\Omega_0}$ with positive measure $|\mathcal{O}|>0$, then 
		there exists $\varepsilon_0>0$ 
		and a channel $T_{l,\varepsilon_0}$ (which connects $B_r(z_l)$ with $B_r(z_{l+1})$) of maximal width $\varepsilon_0$ 
		such that $|\mathcal{O} \cap T_{l,\varepsilon_0}| > 0$.
		Since the channel shrinks to a line as $\varepsilon \to 0$, we see that $|\mathcal{O} \cap \left(\mathbb{R}^N \setminus T_{l,\varepsilon}\right)| > 0$ for any sufficiently small $\varepsilon>0$, which means that $|\mathcal{O} \cap \left(\mathbb{R}^N \setminus \Omega_\varepsilon\right)| > 0$.	
		However, this is impossible since $u_\varepsilon \in \W(\Omega_\varepsilon)$ and it is extended by zero outside of $\Omega_\varepsilon$, and $u_\varepsilon \to u_0$ a.e.\ in $\mathbb{R}^N$.
		Therefore, we have $u_0 \in \W(\Omega_0)$, see, e.g., \cite[Corollary~3.3]{tiba} for an explicit reference.	
		
		The strong convergence in $L^q(\Omega_{\varepsilon_1})$ yields
		\begin{equation}\label{eq:lem:uq=1}
			1
			=
			\intOe |u_\varepsilon|^q \,dx
			=
			\int_{\Omega_{\varepsilon_1}} |u_\varepsilon|^q \,dx
			\to
			\int_{\Omega_{\varepsilon_1}} |u_0|^q \,dx
			=
			\int_{\Omega_{0}} |u_0|^q \,dx
			~~\text{as}~ \varepsilon \searrow 0,
		\end{equation}
		and so $u_0 \not\equiv 0$ in $\Omega_{0}$.
		Applying also the weak convergence in $\W(\Omega_{\varepsilon_1})$ and the strong convergence in $L^p(\Omega_{\varepsilon_1})$, we obtain
		\begin{align*}
			\int_{\Omega_{0}} |\nabla u_0|^p \,dx - \lambda_1(p;B_r) \int_{\Omega_{0}} |u_0|^p \,dx	
			&=
			\int_{\Omega_{\varepsilon_1}} |\nabla u_0|^p \,dx - \lambda_1(p;B_r) \int_{\Omega_{\varepsilon_1}} |u_0|^p \,dx
			\\
			&\leq
			\liminf_{\varepsilon \searrow 0}
			\left(\int_{\Omega_{\varepsilon_1}} |\nabla u_\varepsilon|^p \,dx - \lambda_1(p;B_r) \int_{\Omega_{\varepsilon_1}} |u_\varepsilon|^p \,dx 
			\right)
			\\
			&=
			\liminf_{\varepsilon \searrow 0}
			\left(\intOe |\nabla u_\varepsilon|^p \,dx - \lambda_1(p;B_r) \intOe |u_\varepsilon|^p \,dx 
			\right)
			\leq 0,
		\end{align*}
		where the last inequality follows from \eqref{eq:hles01}.
		Thus, we have shown that $u_0$ is an admissible function for the definition \eqref{def:beta*epsilon} of $\beta_*(\Omega_0)$, and hence, using \eqref{eq:uepsbound1} and \eqref{eq:lem:uq=1}, we arrive at the following contradiction:
		\begin{align*}
			\beta_*(\Omega_0) 
			\leq
			\frac{\intOo |\nabla u_0|^q \,dx}{\intOo |u_0|^q \,dx}
			=
			\intOo |\nabla u_0|^q \,dx
			&\leq
			\liminf_{\varepsilon \searrow 0}
			\int_{\Omega_{\varepsilon_1}} |\nabla u_\varepsilon|^q \,dx
			\\
			&=
			\liminf_{\varepsilon \searrow 0}
			\intOe |\nabla u_\varepsilon|^q \,dx
			< 
			\beta_*(\Omega_0).
		\end{align*}
		Therefore, the convergence \eqref{eq:betastart2} is established.
		
		Combining now \eqref{eq:betastart1} and \eqref{eq:betastart2}, we see that
		$$
		\beta_*(\Omega_0) + o(1) \leq \beta_*(\Omega_\varepsilon).
		$$
		That is, we obtain the desired relation
		$$
		\lambda_1(q;B_r)  
		<
		\beta_*(B_r)
		=
		\beta_*(\Omega_0)
		\leq 
		\beta_*(\Omega_\varepsilon) - o(1),
		$$
		which justifies \eqref{eq:lambda1beta2}.
		Finally, combining \eqref{eq:lambda1beta2} and \eqref{eq:lambda1kx}, we complete the proof of the lemma.
	\end{proof}
	
	\begin{remark}\label{rem:astar}
		Essentially the same proof shows the existence of a smooth bounded domain $\Omega \subset \mathbb{R}^N$ such that $\lambda_l(p) < \alpha_*$ for given $N \geq 2$ and $l \geq 2$.
		We do not provide further details since this inequality can be observed already in the case $N=1$, see \cite[Lemma~A.3]{BobkovTanakaNodal1}.
	\end{remark}

	\bigskip
	\noindent
	\textbf{Acknowledgments.}
	M.~Tanaka was supported by JSPS KAKENHI Grant Number JP~23K03170.

	\addcontentsline{toc}{section}{\refname}
	\small

\end{document}